\documentclass[10pt]{amsart}

\usepackage[latin1]{inputenc}
\usepackage[english]{babel}

\usepackage{color}

\usepackage{indentfirst}
\usepackage{amssymb}
\usepackage{amsthm}

%%%%%%%%%%%%%%NUMBERS
\newcommand{\con}{\mathfrak c}
%\newcommand{\eps}{\varepsilon}

%%%%%%%%%%%%%%%%%%%%FAMILIES

%%%%%%%%%%%%%%%%%%%%%%OTHERS
\newcommand{\vf}{\varphi}
\newcommand{\sm}{\setminus}

%%%MIS COMANDOS

\def\cF{{{\mathcal F}}}

\def\natnums{\mathbb N}

\def\N{\natnums}

\newcommand{\Ba}{{\rm Ba}}
\newcommand{\Bo}{{\rm Bo}}

%%%%COMANDOS ISAAC

%COMANDOS JOSE
\newcommand{\impli}{\Rightarrow}
\newcommand{\Nat}{\mathbb{N}}

\newcommand{\erre}{\mathbb{R}}

%Comandos Grzegorz
%%%%%%%%%%%%%%%%%%%%%%%%%%%%%%%%%%%%%%%%%LOCAL SHORTENINGS
%%%%%%%%%%%%%%NUMBERS
%\newcommand{\con}{\mathfrak c}
%\newcommand{\eps}{\varepsilon}
%%%%%%%%%%%%%%%%%%%%FRAK FAMILIES

%%%%%%%%%%%%%%%%%%%%SCRIPT FAMILIES

%\newcommand{\FF}{{\mathcal F}}

\newcommand{\JJ}{{\mathcal J}}
%\newcommand{\DD}{{\mathcal D}}

%%%%%%%%%%%%%%%%%%%%%%%SYMBOLS

%\newcommand{\qu}{\mathbb Q}
%\newcommand{\supp}{{\rm supp} }

%\newcommand{\vf}{\varphi}

\newcommand{\ol}{\overline}

%\newcommand{\cc}{\protect{\mathfrak C}}

%\newcommand{\sm}{\setminus}

%\newcommand{\sub}{\subseteq}

%%%%%%%%%%%%%%%%%%%%%%%%%%%%%%%%%%%%%%%%%%%%%%%%%%%%%%%%%%%
\newcommand{\too}{2^{\omega_1}}

%TEOREMAS Y DEMAS COMANDOS...
\newtheorem{theo}{Theorem}[section]
\newtheorem{lem}[theo]{Lemma}%[section]
\newtheorem{pro}[theo]{Proposition}%[section]
\newtheorem{cor}[theo]{Corollary}%[theo]
\newtheorem{defi}[theo]{Definition}%[section]
%[section]

% AVILES
\newtheorem{thm}[theo]{Theorem}
%[section]
%[section]

%GRZEGORZ
\newtheorem{lemma}[theo]{Lemma}

\theoremstyle{definition}

\theoremstyle{remark}
\newtheorem{remark}[theo]{Remark}
\numberwithin{equation}{section}

%MAS COSAS

%\def\con{C(K)}

\def\epsilon{\varepsilon}

%%%%AVILES

%%%FOR CHANGES

\providecommand{\MR}{\relax\ifhmode\unskip\space\fi MR }
% \MRhref is called by the amsart/book/proc definition of \MR.

\providecommand{\href}[2]{#2}

\title{Measurability in $C(2^\kappa)$ and Kunen cardinals}

\author{ A. Avil\'{e}s}
\address{Departamento de Matem\'{a}ticas\\
Facultad de Matem\'{a}ticas\\ Universidad de Murcia\\ 30100 Espinardo (Murcia)\\
Spain} \email{avileslo@um.es}

\author{G. Plebanek}
\address{Instytut Matematyczny\\ Uniwersytet Wroc\l awski\\ Pl.\ Grunwaldzki 2/4\\
50-384 Wroc\-\l aw\\ Poland} \email{grzes@math.uni.wroc.pl}

\author{J. Rodr\'{i}guez}
\address{Departamento de Matem\'{a}tica Aplicada\\
Facultad de Inform\'{a}tica\\ Universidad de Murcia\\ 30100 Espinardo (Murcia)\\
Spain} \email{joserr@um.es}

%\date{\today}

\subjclass[2000]{28A05, 28B05}

\keywords{Kunen cardinal; Banach space; Baire $\sigma$-algebra; Borel $\sigma$-algebra}

\thanks{A.~Avil\'{e}s and J.~Rodr\'{i}guez were supported by MEC and FEDER (Project MTM2008-05396)
and Fundaci\'{o}n S\'{e}neca (Project 08848/PI/08). A. Avil\'{e}s was 
supported by {\em Ramon y Cajal} contract (RYC-2008-02051) and an FP7-PEOPLE-ERG-2008 action.
G. Plebanek was partially supported by MNiSW Grant N N201 418939 (2010--2013)}

\begin{document}

\begin{abstract}
A cardinal $\kappa$ is called a Kunen cardinal if the $\sigma$-algebra on~$\kappa \times \kappa$
generated by all products $A\times B$, where $A,B \subset \kappa$, coincides
with the power set of~$\kappa\times \kappa$. For any cardinal~$\kappa$,
let $C(2^\kappa)$
be the Banach space of all continuous real-valued functions on the Cantor cube~$2^\kappa$.
We prove that $\kappa$ is a Kunen cardinal if and only if the Baire $\sigma$-algebra on~$C(2^\kappa)$ for
the pointwise convergence topology coincides with the Borel $\sigma$-algebra 
on~$C(2^\kappa)$ for the norm topology. Some other links between Kunen cardinals
and measurability in Banach spaces are also given.
\end{abstract}

\maketitle

\section{Introduction}

In every completely regular topological space~$T$ there are two natural $\sigma$-algebras:
the Borel $\sigma$-algebra $\Bo(T)$ generated by all open sets and, usually much smaller, the Baire
$\sigma$-algebra $\Ba(T)$ generated by all continuous
real-valued functions on~$T$. For a Banach space~$X$, we always have
$$
    \Ba(X_w) \subset \Bo(X_w) \subset \Bo(X)=\Ba(X)
$$
where $X_w$ stands for $X$ equipped with its weak topology.
Moreover, for the Banach space $C(K)$ of all continuous real-valued functions
on a compact space~$K$, other $\sigma$-algebras appear:
$$
\begin{array}{c c c c c}
\Ba(C_p(K)) & \subset & \Bo(C_p(K)) & \mbox{ } & \mbox{ } \\
 \cap & \mbox{ } & \cap & \mbox{ } & \mbox{ } \\
\Ba(C_w(K)) & \subset & \Bo(C_w(K)) & \subset & \Bo(C(K))
\end{array}
$$
where $C_p(K)$ (resp. $C_w(K)$)
stands for $C(K)$ equipped with the pointwise convergence (resp. weak) topology.
It is well-known that all these $\sigma$-algebras coincide for separable
Banach spaces. For nonseparable Banach spaces some of the inclusions above might be
strict and the equalities between these $\sigma$-algebras are closely related
to several interesting properties of~$X$ and~$K$, see e.g. \cite{bur-pol-1,bur-pol-2,edg-J,edg1,mar-pol-1,mar-pol-2,tal9}.

The first example of a nonseparable Banach space~$X$ for which
$\Ba(X_w)=\Bo(X)$ was given by Fremlin~\cite{fre6} showing that
such equality holds for $X=\ell^1(\omega_1)$. For any cardinal~$\kappa$, Fremlin proved that
the equality 
$$
	\Ba(\ell^1(\kappa)_w)=\Bo(\ell^1(\kappa))
$$
is equivalent to saying that 
\begin{equation}\label{equation:Kunen}
	\mathcal{P}(\kappa \times \kappa)=\mathcal{P}(\kappa)\otimes \mathcal{P}(\kappa)
\end{equation}
(i.e. the power set of $\kappa \times \kappa$ coincides with the $\sigma$-algebra 
on~$\kappa \times \kappa$ generated by all products $A\times B$, where $A,B \subset \kappa$). 
From now on we shall say that a cardinal~$\kappa$ is a {\em Kunen cardinal} if~\eqref{equation:Kunen} holds.
This notion has its origin in a problem posed by Ulam \cite{Ulam} and was investigated by Kunen in his doctoral dissertation \cite{Kunen68}. Let us mention that:
\begin{enumerate}
\item[(i)] any Kunen cardinal is less than or equal to~$\mathfrak{c}$; 
\item[(ii)] $\omega_1$ is a Kunen cardinal; 
\item[(iii)] $\con$ is a Kunen cardinal under Martin's axiom, while it is relatively consistent that $\con$ 
is not a Kunen cardinal. 
\end{enumerate}
Kunen cardinals have been also considered by Talagrand~\cite{tal10} 
in connection with measurability properties of Banach spaces, and in a paper by Todorcevic \cite{Todorcevic} on universality properties of $\ell_\infty/c_0$, where the reader can find more accurate historical remarks on this topic. 

In this paper we focus on the Banach space $C(2^\kappa)$ for a cardinal~$\kappa$ and prove that
the equality 
$$
	\Ba(C_p(2^\kappa))=\Bo(C(2^\kappa))
$$
holds if and only if $\kappa$ is a Kunen cardinal (Theorem~\ref{thm:Main}). This 
extends Fremlin's aforementioned result, since $C(2^\kappa)$ contains~$\ell^1(\kappa)$
isomorphically. The picture of coincidence of $\sigma$-algebras on~$C(2^\kappa)$ is then the following:
\begin{itemize}

\item[(a)] $\Bo(C_p(2^\kappa)) = \Bo(C(2^\kappa))$ for any $\kappa$, since
$C(2^\kappa)$ admits a pointwise Kadec equivalent norm, 
see e.g. \cite[VII.1.10]{dev-alt-J} and~\cite{edg1}.

\item[(b)] $\Ba(C_p(2^\kappa)) = \Ba(C_w(2^\kappa))$ if and only if $\kappa\leq\mathfrak{c}$.
Indeed, the ``if'' follows from the fact that any Radon probability on~$2^{\mathfrak{c}}$
admits a uniformly distributed sequence (cf. \cite[491Q]{freMT-4}). On the other hand, if
$\kappa>\mathfrak{c}$ then $2^\kappa$ is nonseparable and so
the standard product measure on~$2^\kappa$ cannot be $\Ba(C_p(2^\kappa))$-measurable
(cf. \cite[Proposition~3.6]{rod-ver}).

\item[(c)] $\Ba(C_p(2^\kappa)) = \Bo(C(2^\kappa))$ if and only if $\kappa$ is a Kunen cardinal. 

\end{itemize} 

The paper is organized as follows. Section~\ref{section:Main} is entirely devoted
to prove statement~(c) (Theorem~\ref{thm:Main}). The proof is self-contained and rather technical. 

In Section~\ref{section:omega1} we single out a certain topological property of a compact space $K$
which guarantees that $\Ba(C_p(K)) = \Bo(C_p(K))$ (Corollary \ref{1:2}).
That property holds for $K=2^{\omega_1}$ and this gives a more direct proof of the equality
$\Ba(C_p(2^{\omega_1})) = \Bo(C(2^{\omega_1}))$ which relies on statement~(a) above. 

In Section~\ref{section:Norms}
we show that a Banach space~$X$ admits a non $\Ba(X_w)$-measurable equivalent norm 
whenever $X$ has a biorthogonal system of non Kunen cardinality
(Theorem~\ref{renormeBABS}): this applies to $C(2^{\kappa})$ and $\ell^1(\kappa)$ provided that $\kappa$ is not Kunen.

\subsection*{Terminology}

For any $n\in \Nat$ we write $2^n:=\{0,1\}^n$. As usual, $\omega_1$ denotes the first uncountable ordinal
and $\mathfrak{c}$ is the cardinality of the continuum. All our topological spaces are assumed to be Hausdorff.
Given a measurable space $(Y,\Sigma)$ and $S \subset Y$, the {\em trace of~$\Sigma$ on~$S$}
is the $\sigma$-algebra on~$S$ defined by $\{S\cap A:A\in \Sigma\}$.

Given any set $\Gamma$, we write $\mathcal{P}(\Gamma)$ to denote the power set of~$\Gamma$. The symbol
$|\Gamma|$ stands for the cardinality of~$\Gamma$. The $\sigma$-algebra 
on~$\Gamma^2=\Gamma\times \Gamma$ generated by all products $A\times B$, where $A,B \subset \Gamma$, is
denoted by $\mathcal{P}(\Gamma)\otimes \mathcal{P}(\Gamma)$. For any $U \subset \Gamma$, the characteristic 
function $1_U:\Gamma \to \{0,1\}$ is defined by $1_U(\gamma)=1$ if $\gamma\in U$, $1_U(\gamma)=0$ if $\gamma\in U$.
We denote by $2^\Gamma$ the Cantor cube, i.e. the set of all $\{0,1\}$-valued
functions on~$\Gamma$, which becomes a compact space when equipped with the pointwise convergence topology.
$\mathcal{P}(\Gamma)$ and~$2^\Gamma$ can be identified via $U \mapsto 1_U$.

Given a set $E$ and $\cF \subset \erre^E$, we write $\sigma(\cF)$ to denote the
$\sigma$-algebra on~$E$ generated by~$\cF$ (i.e. the smallest one for which every $f\in \cF$ is measurable).
It is well-known that if $E$ is a locally convex space then $\Ba(E_w)=\sigma(E')$, where 
$E_w$ stands for~$E$ equipped with its weak topology and $E'$ is the (topological) dual of~$E$, 
see \cite[Theorem~2.3]{edg-J}. In particular, we have:
\begin{itemize}
\item[(i)] $\Ba(C_p(K))=\sigma(\{\delta_t:t\in K\})$ for every compact space~$K$, where
$\delta_t$ denotes the Dirac delta at $t\in K$.
\item[(ii)] $\Ba(X_w)=\sigma(X^*)$ for every Banach space~$X$ (with dual $X^*$).
\end{itemize}
In view of~(ii) and the Hahn-Banach theorem, if $Y$ is a closed subspace of a 
Banach space~$X$, then the trace of~$\Ba(X_w)$ on~$Y$ is exactly~$\Ba(Y_w)$.

\section{The main result}\label{section:Main}

The aim of this section is to prove that the equality
$\Ba(C_p(2^\Gamma))=\Bo(C(2^\Gamma))$ is equivalent to saying that $|\Gamma|$ is a Kunen
cardinal (Theorem~\ref{thm:Main} below). The proof is split
into several lemmas for the convenience of the reader. Throughout this section~$\Gamma$ is a fixed infinite set.

\begin{lem}\label{lem:MoreThanContinuum}
Let $A \in \mathcal{P}(\Gamma) \otimes \mathcal{P}(\Gamma)$. Define an equivalence relation 
$\approx$ on~$\Gamma$ by saying that $\gamma \approx \gamma'$ if and only if, for each $\delta \in \Gamma$, we have
$$
	(\delta,\gamma)\in A \ \Leftrightarrow \ (\delta,\gamma') \in A
	\quad \mbox{and} \quad
	(\gamma,\delta)\in A \ \Leftrightarrow \ (\gamma',\delta) \in A.
$$
Then $\approx$ has at most $\mathfrak{c}$ many equivalence classes.
\end{lem}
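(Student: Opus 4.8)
The plan is to exploit the fact that every element of $\mathcal{P}(\Gamma)\otimes\mathcal{P}(\Gamma)$ already lies in the $\sigma$-algebra generated by \emph{countably many} of the defining rectangles, and then to code the classes of $\approx$ by a map into a set of size~$\mathfrak{c}$. Concretely, I would first recall the standard fact that for any family $\mathcal{G}$ of subsets of a set, $\sigma(\mathcal{G})$ equals the union of the $\sigma$-algebras $\sigma(\mathcal{C})$ taken over all countable subfamilies $\mathcal{C}\subseteq\mathcal{G}$ (the right-hand side is immediately seen to be a $\sigma$-algebra containing $\mathcal{G}$). Applying this with $\mathcal{G}=\{E\times F:E,F\subseteq\Gamma\}$ yields countably many rectangles $A_n\times B_n$ ($n\in\Nat$) with $A\in\sigma(\{A_n\times B_n:n\in\Nat\})$. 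I then define $\varphi\colon\Gamma\to 2^{\Nat}\times 2^{\Nat}$ by $\varphi(\gamma)=\big((1_{A_n}(\gamma))_n,(1_{B_n}(\gamma))_n\big)$; since $|2^{\Nat}\times 2^{\Nat}|=\mathfrak{c}$, it suffices to prove that $\varphi(\gamma)=\varphi(\gamma')$ implies $\gamma\approx\gamma'$.

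To establish this implication, fix $\gamma,\gamma'$ with $\varphi(\gamma)=\varphi(\gamma')$, so that $\gamma\in A_n\Leftrightarrow\gamma'\in A_n$ and $\gamma\in B_n\Leftrightarrow\gamma'\in B_n$ for every $n$. I would consider the collection $\mathcal{H}$ of all $S\subseteq\Gamma\times\Gamma$ such that, for every $\delta\in\Gamma$, both $(\delta,\gamma)\in S\Leftrightarrow(\delta,\gamma')\in S$ and $(\gamma,\delta)\in S\Leftrightarrow(\gamma',\delta)\in S$. A routine check shows that $\mathcal{H}$ is a $\sigma$-algebra (in fact it is closed under complements and arbitrary unions and intersections). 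Each generator $A_n\times B_n$ belongs to $\mathcal{H}$: for the first condition, $(\delta,\gamma)\in A_n\times B_n$ reads $\delta\in A_n$ and $\gamma\in B_n$, and the hypothesis $\gamma\in B_n\Leftrightarrow\gamma'\in B_n$ makes this equivalent to $(\delta,\gamma')\in A_n\times B_n$; the second condition follows symmetrically using $\gamma\in A_n\Leftrightarrow\gamma'\in A_n$. Hence $\sigma(\{A_n\times B_n:n\in\Nat\})\subseteq\mathcal{H}$, so $A\in\mathcal{H}$, which is exactly the statement that $\gamma\approx\gamma'$. Consequently the number of equivalence classes is bounded by the number of distinct values of $\varphi$, which is at most $\mathfrak{c}$.

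This is a soft counting argument, so I do not expect a serious computational obstacle. The one point requiring care is the countable-generation reduction in the first step, and the genuine crux is choosing the right invariant $\varphi$ together with the $\sigma$-algebra $\mathcal{H}$ tailored to the two membership conditions defining $\approx$; once these are in place, the whole proof collapses to the verification that the generators $A_n\times B_n$ lie in $\mathcal{H}$.
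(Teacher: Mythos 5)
Your proof is correct and takes essentially the same approach as the paper: reduce $A$ to the $\sigma$-algebra generated by countably many rectangles, code points by their $0$-$1$ membership patterns in the factors (at most $\mathfrak{c}$ patterns), and then run a good-sets argument showing that the family of sets with the required invariance property is a $\sigma$-algebra containing the generators, hence contains $A$. The only cosmetic difference is that the paper works with a single $\sigma$-algebra $\mathcal{A}_1$ of sets invariant under the pattern-equivalence in both coordinates for all pairs at once, whereas you introduce one $\sigma$-algebra $\mathcal{H}$ per fixed pair $\gamma,\gamma'$; this changes nothing of substance.
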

\begin{proof} Take $B_n \subset \Gamma$, $n\in \Nat$, such that
$A$ belongs to the $\sigma$-algebra $\mathcal{A}_0$ on~$\Gamma^2$ generated by the sequence $(B_{2m}\times B_{2m-1})_{m\in \Nat}$.
Define an equivalence relation $\sim$ on~$\Gamma$ by 
$$
	\gamma \sim \gamma' \ \Leftrightarrow  \ 1_{B_n}(\gamma) = 1_{B_n}(\gamma') \mbox{ for all }n\in \Nat. 
$$
Since there are at most $\mathfrak{c}$ distinct sequences of the form $(1_{B_n}(\gamma))_{n\in \Nat}\in 2^\Nat$, 
the relation $\sim$ has at most $\mathfrak{c}$ many equivalence classes.
Let $\mathcal{A}_1$ be the family made up of all $C \in \mathcal{A}_0$
such that, for each $\gamma\sim \gamma'$ and $\delta\sim\delta'$, we have
$$
	(\gamma,\delta)\in C \ \Leftrightarrow \ (\gamma',\delta')\in C.
$$
Clearly $\mathcal{A}_1$ is a $\sigma$-algebra containing $B_{2m}\times B_{2m-1}$
for all $m\in \Nat$, hence $\mathcal{A}_0=\mathcal{A}_1$ and so $A\in \mathcal{A}_1$. In particular, we have
$\gamma \approx \gamma'$ whenever $\gamma \sim \gamma'$. It follows that 
the relation $\approx$ has at most $\mathfrak{c}$ many equivalence classes as well.
\end{proof}

Part~(ii) of the following lemma is well-known, see \cite{Kunen68}.

\begin{lem}\label{lem:MoreNoKunen}
Let $\Omega=\{(\gamma_1,\gamma_2)\in \Gamma^2: \gamma_1\neq \gamma_2\}$ and let 
$\Sigma$ be the trace of $\mathcal{P}(\Gamma)\otimes \mathcal{P}(\Gamma)$ on~$\Omega$. Then:
\begin{enumerate}
\item[(i)] $|\Gamma|$ is a Kunen cardinal if and only if $\Sigma = \mathcal{P}(\Omega)$.
\item[(ii)] If $|\Gamma|>\mathfrak{c}$, then $|\Gamma|$ is not a Kunen cardinal.
\end{enumerate}
\end{lem}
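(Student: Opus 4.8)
The plan is to deduce both parts from Lemma~\ref{lem:MoreThanContinuum}, the key point being that a strict linear order on~$\Gamma$ produces a set whose associated equivalence relation is essentially trivial.

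For~(ii), I would fix a well-ordering of~$\Gamma$ and set $W=\{(\gamma,\gamma')\in\Gamma^2:\gamma<\gamma'\}$. If $|\Gamma|$ were a Kunen cardinal then $W\in\mathcal{P}(\Gamma)\otimes\mathcal{P}(\Gamma)$, so Lemma~\ref{lem:MoreThanContinuum} applies to $A=W$. The resulting relation $\approx$ is the identity: given $\gamma\neq\gamma'$, say $\gamma<\gamma'$, the element $\delta=\gamma'$ satisfies $\gamma<\delta$ but not $\gamma'<\delta$, so the row condition fails and $\gamma\not\approx\gamma'$. Thus $\approx$ has exactly $|\Gamma|$ classes, whence $|\Gamma|\le\mathfrak{c}$ by the lemma, contradicting $|\Gamma|>\mathfrak{c}$.

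Part~(i) is trivial from left to right: if $\mathcal{P}(\Gamma)\otimes\mathcal{P}(\Gamma)=\mathcal{P}(\Gamma^2)$, then every subset of~$\Omega$ is a trace. For the converse I would first use the hypothesis $\Sigma=\mathcal{P}(\Omega)$ to bound $|\Gamma|$, and then reconstruct the diagonal. Since $W\subseteq\Omega$, the equality $\Sigma=\mathcal{P}(\Omega)$ gives $A\in\mathcal{P}(\Gamma)\otimes\mathcal{P}(\Gamma)$ with $W=\Omega\cap A$; applying Lemma~\ref{lem:MoreThanContinuum} to this~$A$, the associated relation $\approx$ now has at most $\mathfrak{c}$ classes. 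The technical core is to show each class is small: if $\gamma<\gamma'$ are $\approx$-equivalent, then for any $\delta$ strictly between them the row condition at~$A$ reduces (off the diagonal, via $W=\Omega\cap A$) to $(\gamma<\delta)\Leftrightarrow(\gamma'<\delta)$, which fails; hence $\approx$-equivalent elements are order-consecutive and, by transitivity, no class has three elements. So $|\Gamma|\le 2\mathfrak{c}=\mathfrak{c}$.

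Finally, with $|\Gamma|\le\mathfrak{c}$ I would fix an injection $\gamma\mapsto(1_{B_n}(\gamma))_n$ of~$\Gamma$ into~$2^{\Nat}$ for suitable $B_n\subseteq\Gamma$ separating points, and write the diagonal as $\Delta=\bigcap_{n}\big[(B_n\times B_n)\cup((\Gamma\setminus B_n)\times(\Gamma\setminus B_n))\big]\in\mathcal{P}(\Gamma)\otimes\mathcal{P}(\Gamma)$; consequently $\Omega=\Gamma^2\setminus\Delta$ lies there too. Then any $S\subseteq\Gamma^2$ splits as $S=(S\cap\Omega)\cup(S\cap\Delta)$: the first piece equals $\Omega\cap A'$ for some $A'\in\mathcal{P}(\Gamma)\otimes\mathcal{P}(\Gamma)$ because $S\cap\Omega\in\mathcal{P}(\Omega)=\Sigma$, and it lies in the product $\sigma$-algebra since $\Omega$ does; the second equals $(D\times D)\cap\Delta$ with $D=\{\gamma:(\gamma,\gamma)\in S\}$, again in the product $\sigma$-algebra. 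Hence $\mathcal{P}(\Gamma^2)=\mathcal{P}(\Gamma)\otimes\mathcal{P}(\Gamma)$, i.e.\ $|\Gamma|$ is Kunen. The main obstacle is the class-size estimate giving $|\Gamma|\le\mathfrak{c}$; once that is in place the reconstruction of the diagonal is routine.
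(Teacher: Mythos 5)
Your proposal is correct, and it reaches the result by a genuinely different route at the key step. Where the paper also splits into the cases $|\Gamma|\leq\mathfrak{c}$ and $|\Gamma|>\mathfrak{c}$, its witness for the large case is an equivalence relation on~$\Gamma$ with infinite classes of size at most~$\mathfrak{c}$: assuming its graph (off the diagonal) lies in~$\Sigma$, the paper invokes Lemma~\ref{lem:MoreThanContinuum} to extract, by pigeonhole, a set $E$ with $|E|>\mathfrak{c}$ of pairwise $\approx$-equivalent points and derives a contradiction by showing $E$ must sit inside a single (small) equivalence class. You instead use a strict well-ordering $W$ as the witness: the relation $\approx$ that Lemma~\ref{lem:MoreThanContinuum} attaches to $W$ itself is the identity, and the one attached to any $A$ with $\Omega\cap A=W$ has classes of size at most two (the "no point strictly between two equivalent points" argument, which is correct), so the bound $|\Gamma|\leq 2\cdot\mathfrak{c}=\mathfrak{c}$ falls out immediately, with no pigeonhole step. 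This also lets you prove the converse direction of~(i) uniformly ($\Sigma=\mathcal{P}(\Omega)\Rightarrow|\Gamma|\leq\mathfrak{c}\Rightarrow$ Kunen), rather than via the paper's contrapositive-plus-case-split; your small-cardinality half (embedding $\Gamma$ into $2^{\Nat}$ to put the diagonal, and hence every diagonal subset, into $\mathcal{P}(\Gamma)\otimes\mathcal{P}(\Gamma)$, then splitting any $S\subset\Gamma^2$ into its trace and diagonal parts) is essentially the paper's argument with $2^{\Nat}$ in place of~$\erre$ and rational intervals. What your version buys is economy and a cleaner statement of where the obstruction lives (a well-order relation is never in the product $\sigma$-algebra once $|\Gamma|>\mathfrak{c}$); what the paper's version buys is the slightly different explicit example of a non-measurable set (the graph of an equivalence relation with small classes), which is the form reused in the literature it cites.
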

\begin{proof} We distinguish two cases:

\smallskip
{\sc Case $|\Gamma| \leq \mathfrak{c}$.} We can assume without loss of generality that $\Gamma \subset \erre$. 
For each $U\subset \Gamma$, we have
\begin{equation}\label{equation:diagonalsubsets}
	\{(\gamma,\gamma):\gamma \in U\}=
	\bigcap_{n\in \Nat} \bigcup_{q\in \mathbb{Q}} 
	\left(U\cap \Bigl(q-\frac{1}{n},q+\frac{1}{n}\Bigr)\right)^2 \in \mathcal{P}(\Gamma) \otimes \mathcal{P}(\Gamma).
\end{equation}
In particular, we get $\Omega \in \mathcal{P}(\Gamma) \otimes \mathcal{P}(\Gamma)$ and so $\Sigma \subset \mathcal{P}(\Gamma) \otimes \mathcal{P}(\Gamma)$.
Suppose now that $|\Gamma|$ is not a Kunen cardinal. If $A\subset \Gamma^2$ is any set not belonging to $\mathcal{P}(\Gamma) \otimes \mathcal{P}(\Gamma)$,
then $A\cap \Omega \not \in \Sigma$ because 
\eqref{equation:diagonalsubsets} implies that $A\setminus \Omega \in \mathcal{P}(\Gamma) \otimes \mathcal{P}(\Gamma)$.

\smallskip
{\sc Case $|\Gamma|> \mathfrak{c}$.} Let $\equiv$ be an equivalence relation on~$\Gamma$ for which all equivalence classes
are infinite and have cardinality less than or equal to~$\mathfrak{c}$.
We shall check that the set 
$$
	W:=\{(\gamma_1,\gamma_2)\in \Omega: \, \gamma_1 \equiv \gamma_2\}
$$
does not belong to~$\Sigma$. Suppose if possible otherwise. Then there is $A\in \mathcal{P}(\Gamma)\otimes\mathcal{P}(\Gamma)$
such that $A\cap \Omega = W$. Let $\approx$ be the equivalence relation on~$\Gamma$ 
induced by~$A$ as defined in Lemma~\ref{lem:MoreThanContinuum}. Since $|\Gamma|>\mathfrak{c}$, an appeal to
Lemma~\ref{lem:MoreThanContinuum} ensures the existence of $E \subset \Gamma$ with $|E|>\mathfrak{c}$ such that
$\gamma \approx \gamma'$ whenever $\gamma,\gamma'\in E$. Given distinct $\gamma,\gamma'\in E$
we can find $\delta \in \Gamma \setminus \{\gamma,\gamma'\}$ with $\delta \equiv \gamma$.
Then $(\delta,\gamma)\in W = A\cap \Omega$ and the fact that $\gamma \approx \gamma'$ implies
that $(\delta,\gamma')\in A\cap \Omega=W$, hence $\gamma \equiv \gamma'$. This means that $E$ is contained
in some equivalence class of~$\equiv$, which has cardinality less than or equal to~$\mathfrak{c}$.
This contradiction finishes the proof.
\end{proof}

From now on we denote by $\mathfrak I$ the family of all closed nonempty intervals of~$\mathbb{R}$.

\begin{defi} Let $n\in \Nat$.
\begin{enumerate}
\item[(i)] A function $\tau: 2^n \to \mathfrak{I}$ is called a {\em type}
(or an {\em $n$-type}). 
\item[(ii)] Let $\tau$ be an $n$-type. We say that $f\in C(2^\Gamma)$ {\em has type~$\tau$} if there exist 
$\gamma_1,\dots,\gamma_{n}\in\Gamma$ such that 
$$
	f(x) \in \tau(x_{\gamma_1},\dots,x_{\gamma_{n}}) \quad \mbox{for every }x\in 2^\Gamma.
$$
We denote by $Y_\tau$ the set of all $f\in C(2^\Gamma)$ having type~$\tau$.
\end{enumerate}
\end{defi}

\begin{lem}\label{lem:MeasurabilityType}
If $|\Gamma|\leq \mathfrak{c}$, then $Y_\tau$ belongs to $\Ba(C_p(2^\Gamma))$ for every type $\tau$.
\end{lem}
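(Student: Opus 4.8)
The plan is to show $Y_\tau\in\Ba(C_p(2^\Gamma))=\sigma(\{\delta_t:t\in 2^\Gamma\})$ by first reducing the defining condition to a countable set of test points, and only afterwards confronting the existential quantifier over the witnessing coordinates, which I expect to be the real difficulty.

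First I would use the hypothesis $|\Gamma|\le\con$ in the guise of separability: by the Hewitt--Marczewski--Pondiczery theorem $2^\Gamma$ admits a countable dense set $D$. For a fixed tuple $\bar\gamma=(\gamma_1,\dots,\gamma_n)\in\Gamma^n$ put $Y_{\tau,\bar\gamma}=\{f\in C(2^\Gamma):f(x)\in\tau(x_{\gamma_1},\dots,x_{\gamma_n})\text{ for all }x\}$, so that $Y_\tau=\bigcup_{\bar\gamma}Y_{\tau,\bar\gamma}$. For each $s\in 2^n$ the slice $\{x:x_{\gamma_i}=s_i\ (1\le i\le n)\}$ is clopen, hence $D$ meets it in a dense subset; since $\tau(s)\in\mathfrak I$ is closed and $f$ is continuous, the requirement ``$f(x)\in\tau(s)$ throughout that slice'' is equivalent to the same requirement tested only on the points of $D$ lying in the slice. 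This yields
$$
Y_{\tau,\bar\gamma}=\bigcap_{d\in D}\delta_d^{-1}\bigl(\tau(d_{\gamma_1},\dots,d_{\gamma_n})\bigr),
$$
a countable intersection of preimages of closed intervals, so that each $Y_{\tau,\bar\gamma}$ already belongs to $\Ba(C_p(2^\Gamma))$.

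The main obstacle is the union over the up to $\con$-many tuples $\bar\gamma$. To control it I would exploit that every $f\in C(2^\Gamma)$ depends on only countably many coordinates, i.e.\ has a countable support $S(f)$. Call a coordinate $\gamma_i$ \emph{essential} for a given witness of $f\in Y_{\tau,\bar\gamma}$ when $f$ genuinely depends on it, and ``freeze'' the non-essential ones by intersecting $\tau$ over their two possible values: for $J\subseteq\{1,\dots,n\}$ this produces a reduced $|J|$-type $\tau_J$ with $\tau_J(s)=\bigcap_{s'}\tau(s,s')$ (a closed interval, possibly empty, in which case the corresponding piece is empty). Since $\Gamma$ is infinite there are always spare coordinates outside $S(f)$ to play the role of the frozen ones, and a short verification then gives
$$
Y_\tau=\bigcup_{J\subseteq\{1,\dots,n\}}Y^{\mathrm{ess}}_{\tau_J},
$$
where $Y^{\mathrm{ess}}_{\sigma}$ is the set of $f$ having type $\sigma$ via a tuple all of whose coordinates are essential. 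As this is a finite union, it suffices to prove each $Y^{\mathrm{ess}}_{\sigma}$ is Baire; the point of the reduction is that now every witnessing coordinate is forced to lie in the countable set $S(f)$.

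The hard part, which I expect to be the technical heart of the argument, is precisely this last step: converting the existential quantifier over the support-localized coordinates into a condition that is genuinely Borel in the countable data $\bigl(f(d)\bigr)_{d\in D}$. Since an essential coordinate is pivotal, its trace on $D$ is pinned down by $f|_D$ through continuity, so ``having type $\sigma$ via an essential tuple'' is at least \emph{countably determined}; the difficulty is to upgrade this to honest measurability, that is, to show that the set of $f|_D$-patterns admitting a compatible essential tuple is the trace of a Borel set. Concretely, one must check that the image of the Baire-measurable map $f\mapsto\bigl(f(d)\bigr)_{d\in D}$ under the ``compatible tuple exists'' relation is Borel, and it is here that continuity of $f$, the countable support, and a careful choice of $D$ (equivalently, of the associated trace family of coordinates on $D$) have to be combined. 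I would regard making this recognition measurable --- rather than merely countably determined --- as the crux of the whole proof.
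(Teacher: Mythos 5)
Your first two steps are sound: for a fixed tuple $\bar\gamma$ the set $Y_{\tau,\bar\gamma}$ is indeed a countable intersection $\bigcap_{d\in D}\delta_d^{-1}\bigl(\tau(d_{\gamma_1},\dots,d_{\gamma_n})\bigr)$ over a countable dense $D\subset 2^\Gamma$, hence Baire, and the decomposition $Y_\tau=\bigcup_{J}Y^{\mathrm{ess}}_{\tau_J}$ obtained by freezing coordinates outside the support is essentially correct (though note the spare coordinates need not be chosen outside $S(f)$, which is just as well, since $S(f)$ may be all of a countable $\Gamma$). But the argument stops exactly where the lemma lives: you never show that $Y^{\mathrm{ess}}_{\sigma}$ --- defined by an existential quantifier over tuples drawn from the $f$-dependent countable set $S(f)$ --- belongs to $\Ba(C_p(2^\Gamma))$, and you say yourself that this is ``the crux of the whole proof.'' This is a genuine gap, not a routine verification. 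A set of the form ``there exists a witness tuple such that a Borel condition holds'' is a projection, and projections of Borel sets are in general only analytic; localizing the witnesses to $S(f)$ yields countable determinacy but provides no mechanism for converting the quantifier into countable Boolean operations, and nothing in your sketch rules out that the ``compatible tuple exists'' relation fails to be measurable in the data $\bigl(f(d)\bigr)_{d\in D}$.

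The paper resolves this by a different device, which is the idea missing from your proposal: discretize the witnesses rather than localize them. The hypothesis $|\Gamma|\le\mathfrak{c}$ is used to identify $\Gamma$ with a subset of the Cantor set $\Delta=2^{\mathbb{N}}$ (rather than through separability of $2^\Gamma$); one lets $\Gamma_m\subset\Delta$ be the finite set of sequences supported on the first $m$ digits (arranging $\bigcup_m\Gamma_m\subset\Gamma$), and $K_m\subset 2^\Gamma$ the finite set of points $x$ for which $x_\gamma$ depends only on the first $m$ digits of $\gamma$. The paper then proves
$$
Y_\tau \;=\; \bigcap_{m\in\mathbb{N}}\ \bigcup_{\gamma_1^m,\dots,\gamma_n^m\in\Gamma_m}\ \bigcap_{x\in K_m}\bigl\{f\in C(2^\Gamma):\, f(x)\in\tau(x_{\gamma_1^m},\dots,x_{\gamma_n^m})\bigr\},
$$
a countable combination of Baire sets. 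The inclusion ``$\subset$'' is immediate (truncate a true witness to its first $m$ digits); for ``$\supset$'' the approximate witnesses at every level are assembled into an exact one by compactness: the sets $P_m\subset\Delta^n$ of tuples compatible with (the canonical extension of) $f$ on $\tilde K_m$ are compact, nonempty and decreasing, any point of $\bigcap_m P_m$ works on the dense set $\bigcup_m\tilde K_m$, hence everywhere by continuity and closedness of the intervals, and finally the witness is pushed from $\Delta^n$ back into $\Gamma^n$. That compactness argument is precisely the tool your proposal lacks for upgrading ``countably determined'' to ``measurable.''
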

\begin{proof}
Since $|\Gamma|\leq \mathfrak{c}$, we can suppose that $\Gamma$ is a subset of the Cantor set $\Delta = 2^\mathbb{N}$.
We write $\gamma = (\gamma[m])_{m\in\N}$ when we express $\gamma\in \Delta$ as a sequence of $0$'s and $1$'s. 
For each $m\in \Nat$, we consider 
$$
	\Gamma_m := \{\gamma\in \Delta : \, \gamma[k]=0 \mbox{ for all }k>m\}.
$$
Observe that $\bigcup_{m\in\mathbb{N}}\Gamma_m$ is countable and so we 
can suppose without loss of generality that $\bigcup_{m\in\mathbb{N}}\Gamma_m \subset \Gamma$. For each $m\in \Nat$,
let 
$$
	K_m := \{x\in 2^\Gamma: \ x_\gamma = x_\delta \mbox{ whenever }\gamma,\delta \in \Gamma
	\mbox{ satisfy } \gamma[k] = \delta[k] \mbox{ for all }k\leq m\}.
$$
Note that $K_m$ is finite. Indeed, it is easy to check that
$K_m=\{x^\sigma:\sigma \in 2^{2^m}\}$, where $x^\sigma \in 2^\Gamma$ is defined by 
$x^\sigma(\gamma):=\sigma((\gamma[1],\dots,\gamma[m]))$ for all $\gamma\in \Gamma$.

Let $n\in \Nat$ be such that $\tau$ is an $n$-type. The set
$$
	A:=
	\bigcap_{m\in \Nat} \,
	\bigcup_{\gamma_1^m,\dots,\gamma_n^m\in\Gamma_m} \,
	\bigcap_{x\in K_m}
	\{f\in C(2^\Gamma) : \, f(x)\in \tau(x_{\gamma_1^m},\dots,x_{\gamma_n^m})\}
$$
belongs to $\Ba(C_p(2^\Gamma))$. So, in order to prove that $Y_\tau\in\Ba(C_p(2^\Gamma))$ it is enough to check that
$Y_\tau = A$.

We first prove $Y_\tau \subset A$. Take $f\in Y_\tau$. Then there exist $\gamma_1,\dots,\gamma_n\in\Gamma$ such that $f(x)\in\tau(x_{\gamma_1},\ldots,x_{\gamma_n})$ for every $x\in 2^\Gamma$. Given $m\in \Nat$ and $i\in \{1,\dots,n\}$, we can 
choose $\gamma_i^m\in \Gamma_m$ such that $\gamma_i^m[k] = \gamma_i[k]$ for all $k\leq m$. For each $x\in K_m$ we have
$x_{\gamma_i^m} = x_{\gamma_i}$ and hence $f(x)\in \tau(x_{\gamma_1^m},\ldots,x_{\gamma_n^m})$. 
Therefore, $f\in A$.

We now prove $A \subset Y_\tau$. Take $f\in A$. We can consider the function 
$\tilde{f}\in C(2^{\Delta})$ given by $\tilde{f}(x) := f(x|_\Gamma)$. For each $m \in \Nat$, set 
$$
	\tilde{K}_m := \{x\in 2^\Delta: \ x_\gamma = x_\delta \mbox{ whenever }\gamma,\delta \in \Delta
	\mbox{ satisfy } \gamma[k] = \delta[k] \mbox{ for all }k\leq m\},
$$
$$
	P_m = \{(\gamma_1,\ldots,\gamma_n)\in \Delta^n : \, 
	\tilde{f}(x)\in \tau(x_{\gamma_1},\dots,x_{\gamma_n})
	\mbox{ for all } x\in \tilde{K}_m\}.
$$
Observe that $P_m \neq \emptyset$ because $f\in A$ and
$x|_\Gamma\in K_m$ whenever $x\in \tilde{K}_m$. It is easy to check that, for each $x\in \tilde{K}_m$, the set
$ \{(\gamma_1,\ldots,\gamma_n)\in \Delta^n : \tilde{f}(x)\in \tau(x_{\gamma_1},\dots,x_{\gamma_n})\}$ is closed,
hence $P_m$ is compact. Now, since $P_m\supset P_{m+1}$ for all $m\in \Nat$, 
we can pick $(\delta_1,\ldots,\delta_n)\in \bigcap_{m\in\N}P_m$. Then 
$\tilde{f}(x)\in \tau(x_{\delta_1},\ldots,x_{\delta_n})$ for every $x\in \bigcup_{m \in \Nat}\tilde{K}_m$.

We claim that $\bigcup_{m\in \Nat}\tilde{K}_m$ is dense in $2^\Delta$. 
Indeed, fix $z\in 2^\Delta$ and take a finite set
of coordinates $\{\gamma_1,\dots,\gamma_p\}\subset \Delta$. Choose
$m\in \Nat$ large enough such that $(\gamma_i[1],\dots,\gamma_i[m])\neq (\gamma_j[1],\dots,\gamma_j[m])$
whenever $i\neq j$. Then the element $x\in 2^\Delta$ defined by
$$
	x_\gamma:=
	\begin{cases}
	z_{\gamma_i} & \text{if $\gamma[k]=\gamma_i[k]$ for all $k\leq m$},\\
	0 & \text{otherwise},
	\end{cases}
$$
belongs to~$\tilde{K}_m$ and satisfies $x_{\gamma_i}=z_{\gamma_i}$ for every~$i$. This proves
the claim.

It follows that $\tilde{f}(x)\in \tau(x_{\delta_1},\ldots,x_{\delta_n})$ for every $x\in 2^\Delta$.
We choose an arbitrary $\xi\in \Gamma$ and, for each $i\in \{1,\dots,n\}$, we define $\gamma_i := \delta_i$ if $\delta_i\in\Gamma$ and 
$\gamma_i := \xi$ if $\delta_i\not\in\Gamma$. 
We claim that $f(x)\in \tau(x_{\gamma_1},\dots,x_{\gamma_n})$ for every $x\in 2^\Gamma$. Indeed, given any $x\in 2^\Gamma$,
we can select $z\in 2^\Delta$ such that $z|_\Gamma = x$ and $z_{\delta_i} = x_\xi$ whenever $\delta_i\not\in \Gamma$, so that
$$
	f(x) = \tilde{f}(z)\in \tau(z_{\delta_1},\ldots,z_{\delta_n}) = \tau(x_{\gamma_1},\ldots,x_{\gamma_n}),
$$
as claimed. This shows that $f\in Y_\tau$ and the proof is over.
\end{proof}

The proof of the key Lemma~\ref{Mainlemma} is rather technical and will be given later
(Subsection~\ref{subsection:lemma}). In order to state that lemma we first need some definitions. 
From now on, the ``coordinates'' of any $\gamma \in \Gamma^n$, $n\in \Nat$, are denoted
by $\gamma_1,\dots,\gamma_n$, that is, we write $\gamma=(\gamma_1,\dots,\gamma_n)$.

\begin{defi}
Let $\tau$ be an $n$-type.
\begin{enumerate}
\item[(i)] We say that $\gamma,\delta\in\Gamma^n$ are {\em $\tau$-proximal} if
$$
	\tau(1_U(\gamma_1),\ldots,1_U(\gamma_n))\cap \tau(1_U(\delta_1),\ldots,1_U(\delta_n)) \neq\emptyset
$$
for every $U\subset\Gamma$.
\item[(ii)] We say that $A,B\subset \Gamma^n$ are {\em $\tau$-separated} if there exist no 
$\gamma\in A$ and $\delta\in B$ which are $\tau$-proximal.
\end{enumerate}
\end{defi}

\begin{defi}
Let $(Y,\Sigma)$ be a measurable space. We say that $U,V \subset Y$ are 
{\em $\Sigma$-separated} if there is $S\in \Sigma$ such that $U \subset S$ and $V \cap S=\emptyset$.
\end{defi}

\begin{lem}\label{Mainlemma}
Let $\tau$ be an $n$-type, $(Y,\Sigma)$ a measurable space and $\Phi:\Gamma^n \to \mathcal{P}(Y)$ a multifunction 
satisfying: 
\begin{itemize}
\item[(S)] For each $U\subset\Gamma$ and each closed set $I\subset \mathbb{R}$, the sets
$$
	\Phi\bigl(\{\gamma\in\Gamma^n : \, \tau(1_U(\gamma_1),\dots,1_U(\gamma_n))\subset I\}\bigr)
$$
$$
	\Phi\bigl(\{\gamma\in\Gamma^n : \, \tau(1_U(\gamma_1),\dots,1_U(\gamma_n))\cap I = \emptyset\}\bigr)
$$
are $\Sigma$-separated.
\end{itemize}
Suppose $|\Gamma|$ is a Kunen cardinal. If $A,B \subset \Gamma^n$ are $\tau$-separated, 
then $\Phi(A)$ and $\Phi(B)$ are $\Sigma$-separated.
\end{lem}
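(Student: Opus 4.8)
The plan is to recast $\tau$-separation as a covering by ``rectangles'' and then push it through hypothesis (S). First note that $\gamma,\delta\in\Gamma^n$ fail to be $\tau$-proximal exactly when there are $U\subseteq\Gamma$ and a closed interval $I$ with $\tau(1_U(\gamma_1),\dots,1_U(\gamma_n))\subseteq I$ and $\tau(1_U(\delta_1),\dots,1_U(\delta_n))\cap I=\emptyset$ (once the two intervals are disjoint, take $I=\tau(1_U(\gamma_\cdot))$). Writing $G_{U,I}$ and $H_{U,I}$ for the two sets occurring in (S), this means that $A,B$ are $\tau$-separated iff $A\times B\subseteq\bigcup_{U,I}G_{U,I}\times H_{U,I}$. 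Since $|\Gamma|$ is Kunen we have $|\Gamma|\le\mathfrak c$ by Lemma~\ref{lem:MoreNoKunen}(ii), so we may assume $\Gamma\subseteq 2^{\Nat}$ as in Lemma~\ref{lem:MeasurabilityType}. Any prescribed $0/1$ pattern on the finitely many distinct coordinates $\gamma_1,\dots,\gamma_n,\delta_1,\dots,\delta_n$ is realized by a clopen subset of $2^{\Nat}$, so the witnessing $U$ may always be taken in the \emph{countable} family of traces on $\Gamma$ of clopen sets, and only the finitely many intervals $I=\tau(s)$, $s\in 2^n$, are ever needed. Hence there is a \emph{countable} set $\mathcal{W}$ of pairs $(U,I)$ with
\[
A\times B\subseteq\bigcup_{(U,I)\in\mathcal{W}}G_{U,I}\times H_{U,I},
\]
and for each such pair (S) yields $S_{U,I}\in\Sigma$ with $\Phi(G_{U,I})\subseteq S_{U,I}$ and $\Phi(H_{U,I})\cap S_{U,I}=\emptyset$.

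Next I would transport everything into the Polish space $2^{\mathcal{W}}$. The map $\Psi\colon Y\to 2^{\mathcal{W}}$, $\Psi(y)=(1_{S_{U,I}}(y))_{(U,I)\in\mathcal{W}}$, is $(\Sigma,\mathrm{Borel})$-measurable, so it suffices to produce a Borel $\mathcal{B}\subseteq 2^{\mathcal{W}}$ with $\Psi(\Phi(A))\subseteq\mathcal{B}$ and $\Psi(\Phi(B))\cap\mathcal{B}=\emptyset$, since then $S:=\Psi^{-1}(\mathcal{B})$ works. Put $J_\gamma=\{(U,I):\gamma\in G_{U,I}\}$ and $K_\delta=\{(U,I):\delta\in H_{U,I}\}$. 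If $\gamma\in A$ then $\Phi(\gamma)\subseteq\bigcap_{(U,I)\in J_\gamma}S_{U,I}$, so $\Psi(\Phi(\gamma))\subseteq F_\gamma:=\{x:x_{(U,I)}=1 \text{ for all }(U,I)\in J_\gamma\}$; dually $\Psi(\Phi(\delta))\subseteq E_\delta:=\{x:x_{(U,I)}=0 \text{ for all }(U,I)\in K_\delta\}$ for $\delta\in B$. The covering above says precisely that $J_\gamma\cap K_\delta\neq\emptyset$ for all $\gamma\in A$, $\delta\in B$, which is equivalent to $F_\gamma\cap E_\delta=\emptyset$. Thus the problem reduces to Borel-separating the ``up-set'' $\mathcal{F}:=\bigcup_{\gamma\in A}F_\gamma$ from the disjoint ``down-set'' $\mathcal{E}:=\bigcup_{\delta\in B}E_\delta$ in $2^{\mathcal{W}}$. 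A harmless preliminary normalization splits $A$ and $B$ according to the finitely many equality-patterns of their coordinates, using that $\Sigma$-separation is preserved under the relevant finite unions and intersections; this fixes the patterns, reduces all cross-coincidences to the off-diagonal of $\Gamma^2$, and is exactly what lets Kunen enter through Lemma~\ref{lem:MoreNoKunen}(i).

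The main obstacle is this last separation, and it is where the Kunen hypothesis is indispensable. Each $F_\gamma$ and each $E_\delta$ is closed, but $\mathcal{F}$ and $\mathcal{E}$ are unions indexed by the \emph{arbitrary} sets $A,B\subseteq\Gamma^n$, so $\mathcal{F}$ is the projection of a Borel set along $\gamma\in A$ and need not even be analytic; and abstractly a disjoint up-set/down-set pair cannot always be Borel-separated (a non-Borel up-set together with its complementary down-set is a counterexample). The resolution must therefore use that the generators are not arbitrary: membership of $(U,I)$ in $J_\gamma$ or $K_\delta$ depends on $\gamma$, $\delta$ only through coordinate comparisons, so $\gamma\mapsto\mathbf{1}_{J_\gamma}$ and $\delta\mapsto\mathbf{1}_{K_\delta}$ are Borel maps of the coordinate data, and the relation ``$\gamma,\delta$ not $\tau$-proximal'' lies in $\mathcal{P}(\Gamma)\otimes\mathcal{P}(\Gamma)$ on the off-diagonal (cf. Lemmas~\ref{lem:MoreThanContinuum} and~\ref{lem:MoreNoKunen}). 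I would then exploit Kunen to convert this product-measurability into an honest $\sigma$-algebra separation of $A$ from $B$ built from the sets $S_{U,I}$ alone, equivalently to show that the up-closure of $\{J_\gamma:\gamma\in A\}$ stays Borel-separated from $\mathcal{E}$. Checking that Kunen's rectangle-measurability really is strong enough to carry out this separation while keeping the separating set inside the $\sigma$-algebra generated by the $S_{U,I}$ — rather than merely inside $\mathcal{P}(\Gamma^n)$, where the transfer through the multifunction $\Phi$ would fail — is the genuinely delicate step that I expect to be the crux of the proof.
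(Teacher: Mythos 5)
Your proposal reformulates the problem correctly and even cleverly: the observation that non-$\tau$-proximality is always witnessed by a pair $(U,I)$ with $U$ the trace of a clopen subset of $2^{\Nat}$ and $I\in\{\tau(s):s\in 2^n\}$, so that $\tau$-separation of $A,B$ becomes a covering $A\times B\subseteq\bigcup_{(U,I)\in\mathcal W}G_{U,I}\times H_{U,I}$ by \emph{countably} many rectangles, is sound, and the transfer to $2^{\mathcal W}$ via $\Psi$ is legitimate. But the proof stops exactly where the lemma begins. What remains after your reduction is to Borel-separate the up-set $\mathcal F=\bigcup_{\gamma\in A}F_\gamma$ from the disjoint down-set $\mathcal E=\bigcup_{\delta\in B}E_\delta$, and, as you yourself note, disjoint up-sets and down-sets in $2^{\mathcal W}$ need not be Borel-separated; so all the content of the lemma --- how the Kunen hypothesis forces this particular pair to be separable, and separable by a set in $\sigma(\{S_{U,I}\})$ so that it pulls back into $\Sigma$ --- is deferred to a step you describe only as ``the genuinely delicate step that I expect to be the crux.'' That is not a proof strategy with a gap in a routine verification; it is the entire difficulty. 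Likewise, the ``harmless preliminary normalization'' by equality-patterns of coordinates is waved at in one sentence, whereas handling coordinate coincidences is a substantial part of the actual argument.

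For comparison, the paper resolves precisely these two points with machinery absent from your sketch. Coordinate coincidences are encoded by the proximality relations $E\in Prox(\tau)$, their linking classes $\ell_E$, and the projections $L^0_E,L^1_E$, so that $\tau$-separation becomes the family of disjointness conditions $L^0_E(A)\cap L^1_E(B)=\emptyset$ (Lemma~\ref{lem:tauseparation}). Property (S) is then shown to yield $\Sigma$-separation directly when $A,B$ are products (Lemma~\ref{productseparation}), hence when they are open after embedding $\Gamma\subset\erre$ (Lemma~\ref{diagonalsets}), hence when each $L^0_E(A)$, $L^1_E(B)$ is separated by a member of the \emph{algebra} generated by products (Lemma~\ref{lem:ParticularCase}). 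Finally, an induction over subsets $\mathcal R\subseteq Prox(\tau)$ upgrades ``separated by the algebra'' to ``disjoint'': the family $\mathfrak V$ of good separating sets $W\subseteq\Gamma^{\ell_{E_0}}$ is a monotone class (Lemma~\ref{monotoneclassE}) containing that algebra, so it contains the $\sigma$-algebra generated by products, and the Kunen hypothesis enters at exactly one point --- to conclude that this $\sigma$-algebra is all of $\mathcal P(\Gamma^{\ell_{E_0}})$, so that $W:=L^0_{E_0}(A)$ itself is a good separator. Your countable-rectangle covering could conceivably be the starting point of an alternative argument, but until you exhibit the mechanism by which rectangle-measurability of the non-proximality relation produces the separating set inside $\sigma(\{S_{U,I}\})$ (the analogue of the paper's monotone-class induction), the proposal does not establish the lemma.
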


We write $C(2^\Gamma,2)$ to denote the subset of~$C(2^\Gamma)$ made up
of all $\{0,1\}$-valued functions, which can be identified with the algebra
${\rm Clop}(2^\Gamma)$ of all clopen subsets of~$2^\Gamma$ via
the bijection 
$$
		\psi: {\rm Clop}(2^\Gamma) \to C(2^\Gamma,2),
		\quad \psi(A):=1_A.
$$
The trace of $\Ba(C_p(2^\Gamma))$ on $C(2^\Gamma,2)$ is denoted by $\Ba(C_p(2^\Gamma,2))$.
Observe that $\{\psi^{-1}(E):E\in \Ba(C_p(2^\Gamma,2))\}$ is exactly
the $\sigma$-algebra on~${\rm Clop(2^\Gamma)}$ generated by all ultrafilters.
On the other hand, since $C(2^\Gamma,2)$ is norm discrete, 
the trace of $\Bo(C(2^\Gamma))$ on $C(2^\Gamma,2)$ is exactly $\mathcal{P}(C(2^\Gamma,2))$.

We now arrive at our main result:

\begin{thm}\label{thm:Main}
The following statements are equivalent:
\begin{enumerate}
\item[(i)] $|\Gamma|$ is a Kunen cardinal.
\item[(ii)] $\Ba(C_p(2^\Gamma)) = \Bo(C(2^\Gamma))$.
\item[(iii)] $\Ba(C_p(2^\Gamma,2)) = \mathcal{P}(C(2^\Gamma,2))$.
\item[(iv)] The $\sigma$-algebra on~${\rm Clop(2^\Gamma)}$ generated by all ultrafilters
is $\mathcal{P}({\rm Clop}(2^\Gamma))$.
\end{enumerate}
\end{thm}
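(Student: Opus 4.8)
The plan is to prove the cycle of implications (i)$\Rightarrow$(ii)$\Rightarrow$(iii)$\Leftrightarrow$(iv) together with (iii)$\Rightarrow$(i), so that all four statements become equivalent. The equivalence (iii)$\Leftrightarrow$(iv) is essentially immediate from the remarks preceding the theorem: the bijection $\psi$ transports $\Ba(C_p(2^\Gamma,2))$ onto the $\sigma$-algebra on $\clop(2^\Gamma)$ generated by all ultrafilters, and it transports $\mathcal{P}(C(2^\Gamma,2))$ onto $\mathcal{P}(\clop(2^\Gamma))$, so the two equalities say the same thing. The implication (ii)$\Rightarrow$(iii) is also easy: since $C(2^\Gamma,2)$ is norm-discrete, the trace of $\Bo(C(2^\Gamma))$ on it is all of $\mathcal{P}(C(2^\Gamma,2))$, so if $\Ba(C_p(2^\Gamma))=\Bo(C(2^\Gamma))$ then passing to traces on $C(2^\Gamma,2)$ yields $\Ba(C_p(2^\Gamma,2))=\mathcal{P}(C(2^\Gamma,2))$.

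\emph{The two substantial directions are} (iii)$\Rightarrow$(i) and (i)$\Rightarrow$(ii). For (iii)$\Rightarrow$(i), I would argue by contraposition using Lemma~\ref{lem:MoreNoKunen}. If $|\Gamma|$ is not Kunen, then by part~(i) of that lemma there is a set $W\subset\Omega=\{(\gamma_1,\gamma_2):\gamma_1\neq\gamma_2\}$ that does not lie in $\Sigma$, the trace of $\mathcal{P}(\Gamma)\otimes\mathcal{P}(\Gamma)$ on $\Omega$. The idea is to encode such a subset $W$ of pairs as a subset of $C(2^\Gamma,2)\cong\clop(2^\Gamma)$ and show that a measurability failure for $W$ propagates to a failure of $\Ba(C_p(2^\Gamma,2))=\mathcal{P}(C(2^\Gamma,2))$. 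Concretely, to each pair $(\gamma_1,\gamma_2)$ one associates a clopen set built from the coordinate projections $x\mapsto x_{\gamma_1},x_{\gamma_2}$ (for instance the clopen set $\{x:x_{\gamma_1}=x_{\gamma_2}\}$ or a similar expression), giving a map from $\Omega$ into $\clop(2^\Gamma)$; one then checks that membership questions about the generated $\sigma$-algebra on the image pull back to membership in $\Sigma$, so non-$\Sigma$-measurability of $W$ forces a non-ultrafilter-measurable subset of $\clop(2^\Gamma)$.

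\emph{The hard direction, and the main obstacle, is} (i)$\Rightarrow$(ii). Here I would first reduce to $|\Gamma|\leq\mathfrak{c}$: by Lemma~\ref{lem:MoreNoKunen}(ii), a Kunen cardinal satisfies $|\Gamma|\leq\mathfrak{c}$, which is exactly the hypothesis needed for Lemmas~\ref{lem:MeasurabilityType} and~\ref{Mainlemma}. The strategy is to show that every norm-open set, equivalently every norm-ball, is $\Ba(C_p(2^\Gamma))$-measurable; since the norm topology has a base of balls and $C(2^\Gamma)$ has density character at most $\mathfrak{c}$, this will give $\Bo(C(2^\Gamma))\subset\Ba(C_p(2^\Gamma))$, the reverse inclusion being automatic. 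To capture balls I would exploit the type machinery: the sets $Y_\tau$ of functions having a given $n$-type are Baire-measurable by Lemma~\ref{lem:MeasurabilityType}, and the key Lemma~\ref{Mainlemma} lets one separate, by a Baire set, two families of functions whose defining index tuples are $\tau$-separated. The plan is to approximate the condition ``$\|f-g\|<r$'' by countably many type conditions and to use Lemma~\ref{Mainlemma}, with a suitably chosen multifunction $\Phi$ recording the relevant functions, to convert $\tau$-separation of index sets into $\Sigma$-separation in $C_p(2^\Gamma)$; the Kunen hypothesis enters precisely through Lemma~\ref{Mainlemma}. The chief difficulty will be organizing this approximation so that the countably many Baire conditions assemble correctly into the measurability of an arbitrary ball, and verifying the hypothesis~(S) of Lemma~\ref{Mainlemma} for the multifunctions that arise; this is the technical heart of the argument and is where the self-contained but intricate bookkeeping promised in the introduction must be carried out.
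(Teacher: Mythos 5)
Your overall architecture --- the cycle (i)$\Rightarrow$(ii)$\Rightarrow$(iii)$\Leftrightarrow$(iv) plus (iii)$\Rightarrow$(i), and the reliance on Lemmas~\ref{lem:MoreNoKunen}, \ref{lem:MeasurabilityType} and~\ref{Mainlemma} --- matches the paper, but your plan for the hard implication (i)$\Rightarrow$(ii) has a genuine gap: the reduction of ``every norm-open set is $\Ba(C_p(2^\Gamma))$-measurable'' to ``every norm-ball is $\Ba(C_p(2^\Gamma))$-measurable''. In a nonseparable space this reduction fails: a norm-open set is in general a union of \emph{uncountably} many balls, and a $\sigma$-algebra is closed only under countable unions; neither ``the balls form a base'' nor ``density character $\leq\mathfrak{c}$'' lets you assemble an arbitrary open set from countably many measurable pieces. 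Concretely, the coordinate projections $\pi_\gamma\in C(2^\Gamma)$, $\gamma\in\Gamma$, are pairwise at distance $1$, so the open balls $B(\pi_\gamma,1/2)$ are pairwise disjoint; since any ball is convex, hence connected, a ball contained in $\bigcup_{\gamma\in\Gamma}B(\pi_\gamma,1/2)$ must lie inside a single $B(\pi_\gamma,1/2)$, and therefore this open set is \emph{not} a countable union of balls when $\Gamma$ is uncountable. Measurability of balls is exactly the (a priori weaker) property studied in Section~\ref{section:Norms}, so your step would need a genuine argument, not the stated justification. The paper avoids the issue by treating an arbitrary norm-open set $\Theta$ directly: for each $n$-type $\tau$ with values in the countable family $\mathfrak{I}_0$ of closed intervals with rational endpoints, it sets $A_\tau:=\{\gamma\in\Gamma^n:\Phi^\tau(\gamma)\setminus\Theta\neq\emptyset\}$ and $B_\tau:=\{\gamma\in\Gamma^n:\Phi^\tau(\gamma)\cap\Phi^\tau(A_\tau)=\emptyset\}$, checks that these are $\tau$-separated and that $\Phi^\tau$ satisfies hypothesis~(S) (using $Y_\tau\in\Ba(C_p(2^\Gamma))$ from Lemma~\ref{lem:MeasurabilityType}), obtains via Lemma~\ref{Mainlemma} a Baire set $\Theta_\tau\subset Y_\tau$ with $\Phi^\tau(B_\tau)\subset\Theta_\tau$ and $\Phi^\tau(A_\tau)\cap\Theta_\tau=\emptyset$, and then proves $\Theta=\bigcup\{\Theta_\tau:\tau\text{ a type with values in }\mathfrak{I}_0\}$ by an $\varepsilon$--oscillation (triangle inequality) argument. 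The countability that closes the $\sigma$-algebra argument comes from the countable index set of rational types, not from any base of the norm topology; this decomposition is precisely the idea your sketch defers to ``bookkeeping.''

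A secondary, repairable flaw concerns (iii)$\Rightarrow$(i): your sample encoding $(\gamma_1,\gamma_2)\mapsto\{x:x_{\gamma_1}=x_{\gamma_2}\}$ is symmetric, hence not injective on $\Omega$, so pulling back subsets of ${\rm Clop}(2^\Gamma)$ only reaches \emph{symmetric} subsets of $\Omega$; to run your contrapositive you would additionally have to produce a symmetric set outside the trace $\sigma$-algebra $\Sigma$ (this can be done, but it needs an argument), or else use an asymmetric encoding. The paper's choice $H(\gamma_1,\gamma_2)(x):=x_{\gamma_1}(1-x_{\gamma_2})$ is injective and $\Sigma$-measurable, since for each $x\in 2^\Gamma$ the set $\{(\gamma_1,\gamma_2)\in\Omega:H(\gamma_1,\gamma_2)(x)=1\}$ is the rectangle $\{\gamma:x_\gamma=1\}\times\{\gamma:x_\gamma=0\}$ intersected with $\Omega$; assuming (iii), injectivity gives $\Sigma=\mathcal{P}(\Omega)$ at once, and Lemma~\ref{lem:MoreNoKunen}(i) concludes.
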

\begin{proof}
(iii)$\Leftrightarrow$(iv) follows from the comments preceding the theorem.

\medskip
(i)$\impli$(ii).
Let us write $Y:=C(2^\Gamma)$ and $\Sigma := \Ba(C_p(2^\Gamma))$. Let $\Theta$ be an open subset of~$Y$ in the norm topology. 
We shall prove that $\Theta \in \Sigma$.

\smallskip
{\sc Step~1.} Fix an $n$-type $\tau$ and consider the multifunction $\Phi^\tau:\Gamma^n \to \mathcal{P}(Y)$ given by
$$
	\Phi^\tau(\gamma) := \left\{f\in Y: \, f(x) \in \tau(x_{\gamma_1},\dots,x_{\gamma_n})
	\mbox{ for all }x\in 2^\Gamma\right\} 
	\subset Y_\tau.
$$

We first observe that $\gamma,\delta\in\Gamma^n$ are $\tau$-proximal if and only if $\Phi^\tau(\gamma)\cap \Phi^\tau(\delta)\neq\emptyset$.
Indeed, the ``if'' part follows from the fact that 
$$
	f(1_U)\in \tau(1_U(\gamma_1),\ldots,1_U(\gamma_n))\cap \tau(1_U(\delta_1),\ldots,1_U(\delta_n))
$$ 
whenever $f\in \Phi^\tau(\gamma)\cap\Phi^\tau(\delta)$ and $U \subset \Gamma$.
Conversely, assume that $\gamma$ and $\delta$ are $\tau$-proximal. Then for each $U\subset \Gamma$ we can pick
$$
	t_U \in \tau(1_U(\gamma_1),\ldots,1_U(\gamma_n))\cap \tau(1_U(\delta_1),\ldots,1_U(\delta_n)).
$$
Let $W$ be the subset of~$\Gamma$ made up of all $\gamma_i$'s and $\delta_i$'s. Since $W$ is finite, 
the function $f: 2^\Gamma \to \erre$ given by $f(1_U):= t_{U\cap W}$ is continuous. Moreover,
since $1_U(\gamma_i)=1_{U\cap W}(\gamma_i)$ and $1_U(\delta_i)=1_{U\cap W}(\delta_i)$ for every $U \subset \Gamma$ and every~$i$,
we have $f\in \Phi^\tau(\gamma)\cap \Phi^\tau(\delta)$. Hence $\Phi^\tau(\gamma)\cap \Phi^\tau(\delta)\neq \emptyset$.

It follows at once that the following two subsets of~$\Gamma^n$ are $\tau$-separated:
$$
	A_\tau := \{\gamma\in\Gamma^n : \, \Phi^\tau(\gamma)\setminus \Theta \neq \emptyset\},
$$
$$
	B_\tau = \{\gamma\in\Gamma^n : \, \Phi^\tau(\gamma)\cap \Phi^\tau(A_\tau) = \emptyset\}.
$$
On the other hand, $Y_\tau \in \Sigma$ (by Lemmas~\ref{lem:MoreNoKunen} and~\ref{lem:MeasurabilityType}) and so,
for each $U\subset\Gamma$ and each closed set $I\subset \mathbb{R}$, the set 
$S_{(U,I)}:=\{f\in Y_\tau : f(1_U)\in I\}$
belongs to~$\Sigma$ and satisfies
$$
	\Phi^\tau\bigl(\{\gamma\in\Gamma^n : \, \tau(1_U(\gamma_1),\dots,1_U(\gamma_n))\subset I\}\bigr) 
	\subset S_{(U,I)},
$$
$$
	\Phi^\tau\bigl(\{\gamma\in\Gamma^n : \, \tau(1_U(\gamma_1),\dots,1_U(\gamma_n))\cap I = \emptyset\}\bigr)
	\cap S_{(U,I)} = \emptyset.
$$
An appeal to Lemma~\ref{Mainlemma} ensures that $\Phi^\tau(A_\tau)$ and $\Phi^\tau(B_\tau)$ are $\Sigma$-separated, that is,
there is $\Theta_\tau\in \Sigma$ such that $\Phi^\tau(B_\tau)\subset \Theta_\tau$ and $\Phi^\tau(A_\tau)\cap \Theta_\tau = \emptyset$.
Bearing in mind that $Y_\tau \in \Sigma$, we can assume further that $\Theta_\tau\subset Y_\tau$.

\smallskip
{\sc Step~2.} We write
$\mathfrak{I}_0$ to denote the (countable) family of all closed nonempty intervals of~$\mathbb{R}$ with rational endpoints. 
To finish the proof we shall check that
\begin{equation}\label{equation:theta}
	\Theta = \bigcup \{\Theta_\tau : \, \tau\text{ is a type with values in $\mathfrak{I}_0$} \}.
\end{equation}

On the one hand, for any $n$-type~$\tau$, we have $\Theta_\tau\subset Y_\tau\setminus \Phi^\tau(A_\tau)$. Moreover,
we have 
$Y_\tau\setminus \Phi^\tau(A_\tau) \subset \Theta$, because for each $f\in Y_\tau\setminus \Phi^\tau(A_\tau)$
there is some $\gamma \in \Gamma^n \setminus A_\tau$ such that $f \in \Phi^\tau(\gamma) \subset \Theta$.
Thus, the inclusion ``$\supset$'' in~\eqref{equation:theta} holds true. 

In order to prove the reverse inclusion, fix $f\in\Theta$. Since $\Theta$ is norm open, there is 
$\varepsilon>0$ such that $\|f-h\|_\infty\geq 2\varepsilon$ for every $h\in Y\setminus\Theta$.
By the continuity of~$f$ and the compactness of~$2^\Gamma$, we can find finitely many
basic clopen sets $C_i \subset 2^\Gamma$ such that $2^\Gamma=\bigcup_{i} C_i$ and 
the oscillation of~$f$ on each~$C_i$ is less than~$\varepsilon$. 
Thus, we can find a finite set $\{\gamma_1,\dots,\gamma_n\} \subset \Gamma$ and a type 
$\tau:2^n\to \mathfrak{J}_0$ such that: 
\begin{itemize}
\item[(a)] $\tau(p)$ has length less than $\varepsilon$ for every $p\in 2^n$, 
\item[(b)] $f(x)\in \tau(x_{\gamma_1},\ldots,x_{\gamma_n})$ for every $x\in 2^\Gamma$.
\end{itemize}
Condition (b) means that $f \in \Phi^\tau(\gamma)$, where $\gamma:=(\gamma_1,\dots,\gamma_n)\in \Gamma^n$.

We claim that $f\in \Theta_\tau$. Indeed, it suffices to check that
$\gamma \in B_\tau$, because in that case we would have
$f\in \Phi^\tau(\gamma) \subset \Phi^\tau(B_\tau)\subset \Theta_\tau$.
Our proof is by contradiction: suppose that $\gamma\not\in B_\tau$. 
Then there exists $\delta\in A_\tau$ such that $\Phi^\tau(\gamma)\cap \Phi^\tau(\delta)\neq\emptyset$.
Take $g\in \Phi^\tau(\gamma)\cap \Phi^\tau(\delta)$ and $h\in \Phi^\tau(\delta)\setminus\Theta$. By~(a)
we have:
$$
	\|u-v\|_\infty < \varepsilon 
	\quad
	\mbox{for every }u,v \in \Phi^\tau(\zeta) \mbox{ and every }\zeta \in \Gamma^n.
$$
Therefore, $\|f-g\|_\infty < \varepsilon$ (since $f,g\in \Phi^\tau(\gamma)$) 
and $\|g-h\|_\infty < \varepsilon$ (since $g,h\in \Phi^\tau(\delta)$). 
We conclude that $\|f-h\|_\infty< 2\varepsilon$, which contradicts the choice of~$\varepsilon$ because $h\not\in\Theta$.

\medskip
(ii)$\impli$(iii) is obvious.

\medskip
(iii)$\impli$(i). Let $\Omega:=\{(\gamma_1,\gamma_2)\in \Gamma^2: \gamma_1\neq \gamma_2\}$ be equipped
with the trace $\Sigma$ of the product $\sigma$-algebra $\mathcal{P}(\Gamma)\otimes \mathcal{P}(\Gamma)$. 
The function $H:\Omega \to C(2^\Gamma,2)$ given by 
$$
	H(\gamma_1,\gamma_2)(x) := x_{\gamma_1}(1-x_{\gamma_2})
$$
is $\Sigma$-$\Ba(C_p(2^\Gamma,2))$-measurable, because for each $x\in 2^\Gamma$ we have 
$$
	\{(\gamma_1,\gamma_2)\in \Omega : \,  H(\gamma_1,\gamma_2)(x) = 1 \} = 
	\{\gamma\in \Gamma: \, x_\gamma=1\} \times \{\gamma\in \Gamma: \, x_\gamma=0\} \in \Sigma.
$$
Since $\Ba(C_p(2^\Gamma,2)) = \mathcal{P}(C(2^\Gamma,2))$, we have $H^{-1}(X) \in \Sigma$ for every $X \subset C_p(2^\Gamma,2)$.
Thus, bearing in mind that $H$ is one-to-one, we conclude
that $\Sigma=\mathcal{P}(\Omega)$.
An appeal to Lemma~\ref{lem:MoreNoKunen}(i) ensures that $|\Gamma|$ is a Kunen cardinal. The proof is over.
\end{proof}

Recall that a compact space $K$ is called {\em dyadic} if $K$ is a continuous image of
$2^\kappa$ for some cardinal~$\kappa$; in this case, $\kappa$ can be taken to be equal 
to the weight of~$K$, see \cite[3.12.12]{eng}. The class of dyadic compacta of (infinite) weight $\kappa$
contains in particular $\kappa$-fold products of compact metrizable spaces.

\begin{cor}\label{dyadic}
If $K$ is a dyadic space and its weight is a Kunen cardinal, then $\Ba(C_p(K)) = \Bo(C(K))$.
\end{cor}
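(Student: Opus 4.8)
The plan is to reduce the statement to Theorem~\ref{thm:Main} by transporting the two relevant $\sigma$-algebras across a canonical embedding. Let $\kappa=w(K)$ be the weight of~$K$; we may assume $\kappa$ is infinite, since if $K$ is finite the result is trivial. Because $K$ is dyadic, the cited result \cite[3.12.12]{eng} provides a continuous surjection $\pi\colon 2^\kappa\to K$, where the exponent $\kappa$ may be taken equal to the weight of~$K$. First I would introduce the composition operator $\pi^*\colon C(K)\to C(2^\kappa)$, $\pi^*(f):=f\circ\pi$; since $\pi$ is onto, $\pi^*$ is a linear isometry for the sup-norms, hence a homeomorphism of $C(K)$ onto the closed subspace $Z:=\pi^*(C(K))$ when both carry the norm topology.

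Next I would record two elementary but crucial facts about traces of $\sigma$-algebras, valid for an arbitrary subset $Z$ of a set $X$. First, if $X$ is a topological space, then $\Bo(Z)$ for the subspace topology coincides with the trace $\{Z\cap B:B\in\Bo(X)\}$; this follows by the usual ``good sets'' argument in both directions. Second, if $\cF\sub\erre^X$ is any family of functions, then the trace of $\sigma(\cF)$ on~$Z$ equals $\sigma(\{f|_Z:f\in\cF\})$; again one checks both inclusions, the nontrivial one by verifying that $\{A\in\sigma(\cF):Z\cap A\in\sigma(\{f|_Z:f\in\cF\})\}$ is a $\sigma$-algebra containing the generators of $\sigma(\cF)$.

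With these in hand, the core of the argument is to identify, through the bijection $\pi^*$, the two $\sigma$-algebras on $C(K)$ with traces on~$Z$. Using that $\pi^*$ is a norm-homeomorphism onto~$Z$ and the first trace fact, the image $\pi^*(\Bo(C(K)))$ is exactly the trace of $\Bo(C(2^\kappa))$ on~$Z$. For the Baire side I would start from the identity $\Ba(C_p(K))=\sigma(\{\delta_t:t\in K\})$ recalled in the Terminology and compute the pushforward: for $t\in K$ the functional $\delta_t\circ(\pi^*)^{-1}$ on~$Z$ sends $\pi^*(f)$ to $f(t)=(\pi^*f)(x)$ for any $x\in\pi^{-1}(t)$, so it equals the restriction $\delta_x|_Z$. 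The key observation is that this restriction depends only on $\pi(x)$ and not on the chosen point of the fiber, so that as $x$ runs over $2^\kappa$ the functions $\delta_x|_Z$ range over exactly $\{\delta_t\circ(\pi^*)^{-1}:t\in K\}$ by surjectivity of~$\pi$. Combined with the second trace fact, this gives $\pi^*(\Ba(C_p(K)))=\sigma(\{\delta_x|_Z:x\in 2^\kappa\})$, which is precisely the trace of $\Ba(C_p(2^\kappa))$ on~$Z$.

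Finally I would invoke Theorem~\ref{thm:Main}: since $\kappa=w(K)$ is a Kunen cardinal, $\Ba(C_p(2^\kappa))=\Bo(C(2^\kappa))$, so their traces on~$Z$ coincide, and pulling back through the bijection $\pi^*$ yields $\Ba(C_p(K))=\Bo(C(K))$, as desired. I do not expect a serious obstacle here; the only delicate point is the fiber-collapsing step, namely checking that the restricted Dirac functionals $\delta_x|_Z$ are constant on fibers of~$\pi$ and together realize precisely the evaluations $\delta_t$ on $C(K)$, which is exactly what makes the two trace computations line up.
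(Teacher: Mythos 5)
Your proposal is correct and takes essentially the same route as the paper: the paper's one-line proof uses exactly the same composition operator $T(g):=g\circ\vf$ induced by a continuous surjection $\vf:2^\kappa\to K$, observes that it is a pointwise-continuous isometric embedding, and deduces the result from Theorem~\ref{thm:Main}. Your argument simply makes explicit the trace-of-$\sigma$-algebra bookkeeping (Borel via the norm homeomorphism onto the closed image, Baire via the fiber-constant restricted Dirac functionals) that the paper leaves as ``follows directly.''
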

\begin{proof}
Let $\kappa$ be the weight of~$K$. 
If $\vf:2^\kappa\to K$ is a continuous surjection then the mapping
$T: C(K)\to C(2^\kappa)$, $T(g):=g\circ\vf$,  
is an isometric embedding which is pointwise continuous, so 
the assertion follows directly from Theorem \ref{thm:Main}.
\end{proof}

\begin{cor}\label{ell1sums}
Let $\{X_\alpha: \alpha < \kappa\}$ be a family of separable Banach spaces, where $\kappa$ is a Kunen cardinal. Then 
$X:=\bigoplus_{\ell^1} \{X_\alpha:\alpha < \kappa\}$ satisfies $\Ba(X_w) = \Bo(X)$.
\end{cor}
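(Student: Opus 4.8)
The plan is to realize $X$ as a closed subspace of $C(2^\kappa)$ and then transfer the equality $\Ba(C(2^\kappa)_w)=\Bo(C(2^\kappa))$, which Theorem~\ref{thm:Main} supplies for the Kunen cardinal $\kappa$, down to $X$ by taking traces of $\sigma$-algebras. The only genuine construction needed is the embedding; once it is in place the rest is formal.

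First I would record the embedding. Since each $X_\alpha$ is separable, Banach--Mazur universality gives an isometric embedding $\phi_\alpha\colon X_\alpha\to C(2^{\mathbb N})$: a separable space embeds isometrically into $C[0,1]$, and the latter embeds isometrically into $C(2^{\mathbb N})$ via $g\mapsto g\circ q$ for a continuous surjection $q\colon 2^{\mathbb N}\to[0,1]$. Because $\kappa$ is infinite we may identify $2^\kappa$ with $\prod_{\alpha<\kappa}(\{0,1\}\times 2^{\mathbb N})$, writing a generic point as $((\epsilon_\alpha,t_\alpha))_{\alpha<\kappa}$. I would then define $T\colon X\to C(2^\kappa)$ by $T((x_\alpha)_\alpha)\big(((\epsilon_\alpha,t_\alpha))_\alpha\big):=\sum_{\alpha<\kappa}(2\epsilon_\alpha-1)\,\phi_\alpha(x_\alpha)(t_\alpha)$. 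The $\ell^1$-condition $\sum_\alpha\|x_\alpha\|<\infty$ makes this series uniformly convergent, so $T((x_\alpha)_\alpha)\in C(2^\kappa)$, and $T$ is clearly linear with $\|T((x_\alpha)_\alpha)\|_\infty\le\sum_\alpha\|x_\alpha\|$. For the reverse inequality I would, for each $\alpha$, pick $t_\alpha^{*}$ where $|\phi_\alpha(x_\alpha)|$ attains its maximum $\|x_\alpha\|$ and choose $\epsilon_\alpha\in\{0,1\}$ so that $(2\epsilon_\alpha-1)\phi_\alpha(x_\alpha)(t_\alpha^{*})=\|x_\alpha\|$; evaluating at this point returns $\sum_\alpha\|x_\alpha\|$. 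Hence $T$ is an isometric embedding and $Y:=T(X)$ is a closed subspace of $Z:=C(2^\kappa)$.

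Next I would invoke the main result. As $\kappa$ is a Kunen cardinal, Theorem~\ref{thm:Main} gives $\Ba(C_p(2^\kappa))=\Bo(C(2^\kappa))$; combined with the inclusions $\Ba(C_p(2^\kappa))\subset\Ba(C_w(2^\kappa))\subset\Bo(C(2^\kappa))$ recorded in the introduction, this forces $\Ba(Z_w)=\Bo(Z)$. I would then pass to traces on the closed subspace $Y$. The trace of $\Ba(Z_w)$ on $Y$ equals $\Ba(Y_w)$ by the Hahn--Banach remark in the Terminology subsection, while the trace of $\Bo(Z)$ on $Y$ equals $\Bo(Y)$ because taking the trace commutes with generating a $\sigma$-algebra, so the trace of the Borel $\sigma$-algebra is the Borel $\sigma$-algebra of the subspace topology. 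Since $\Ba(Z_w)=\Bo(Z)$, these two traces coincide, giving $\Ba(Y_w)=\Bo(Y)$; transporting along the isometric isomorphism $T$, which is a homeomorphism for both the norm and the weak topologies, yields $\Ba(X_w)=\Bo(X)$.

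I expect the only real work to be the construction and isometry check of $T$ in the first step: the universality input together with the signed-sum trick is precisely what reduces the arbitrary $\ell^1$-sum of separable spaces to the already-settled cube $2^\kappa$. Everything afterwards is the formal trace bookkeeping, which is routine given the results already established.
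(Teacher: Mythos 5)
Your proposal is correct and follows the same overall strategy as the paper: realize $X$ isometrically as a closed subspace of $C(2^\kappa)$ and then transfer the equality of Theorem~\ref{thm:Main} down to $X$ by the trace argument (Hahn--Banach for the weak Baire $\sigma$-algebra, and the fact that the trace of the Borel $\sigma$-algebra on a subspace is the Borel $\sigma$-algebra of the subspace). The difference is in how the embedding is built. The paper does it by duality: each $(B_{X_\alpha^*},w^*)$ is a compact metrizable space, hence a continuous image of $2^{\N}$, so there is a continuous surjection $2^\kappa \to \prod_{\alpha<\kappa} B_{X_\alpha^*} = (B_{X^*},w^*)$, and composing point evaluations with this surjection is \emph{automatically} an isometric embedding $X \to C(2^\kappa)$, the isometry coming for free from surjectivity onto the dual ball. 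You instead embed each $X_\alpha$ into $C(2^{\N})$ by Banach--Mazur universality and assemble these via an explicit signed $\ell^1$-sum on $\prod_{\alpha<\kappa}(\{0,1\}\times 2^{\N}) \cong 2^\kappa$, verifying the isometry by hand; your sign coordinates $\epsilon_\alpha$ are exactly what is needed so that all coordinates can attain their norms simultaneously with positive sign, and the computation is sound (note also that only countably many $x_\alpha$ are nonzero, so the series is a genuine uniformly convergent countable sum of continuous functions). What the paper's route buys is brevity, resting on the standard identification of the dual ball of an $\ell^1$-sum with the product of the dual balls; what your route buys is independence from that duality fact, at the price of a more hands-on construction. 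One small repair: your argument assumes $\kappa$ is infinite, but finite cardinals are (trivially) Kunen cardinals and are allowed by the statement, while Theorem~\ref{thm:Main} is only stated for infinite index sets; the paper disposes of this case first by observing that $X$ is then separable, and you should add that one line.
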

\begin{proof} If $\kappa$ is finite then $X$ is separable and so $\Ba(X_w) = \Bo(X)$. Suppose
$\kappa$ is infinite. Since each $(B_{X_\alpha^\ast},w^*)$ is a metrizable compact, 
there is a continuous surjection $2^\N\to B_{X_\alpha^\ast}$. Hence there is a
continuous surjection 
$$
	2^\kappa \to \prod_{\alpha<\kappa}B_{X_\alpha^\ast} = B_{X^\ast},
$$ 
so $X$ is isometric to a closed subspace of~$C(2^\kappa)$. Since $\Ba(C(2^\kappa)_w)=\Bo(C(2^\kappa))$
(by Theorem~\ref{thm:Main}), we have $\Ba(X_w) = \Bo(X)$ as well.
\end{proof}

\begin{cor}[Fremlin]\label{cor:Fremlin}
$\Ba(\ell^1(\Gamma)_w)=\Bo(\ell^1(\Gamma))$ if $|\Gamma|$ is a Kunen cardinal. 
\end{cor}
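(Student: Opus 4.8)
The plan is to recognize that Corollary~\ref{cor:Fremlin} is exactly the one-dimensional special case of Corollary~\ref{ell1sums}, so that almost no work remains. Writing $\kappa:=|\Gamma|$ and taking $X_\alpha:=\R$ for each $\alpha<\kappa$, the sum $X:=\bigoplus_{\ell^1}\{X_\alpha:\alpha<\kappa\}$ is isometrically isomorphic to $\ell^1(\kappa)$, and since only the cardinality of the index set matters, this is in turn isometrically isomorphic to $\ell^1(\Gamma)$. Each summand $\R$ is separable, so as soon as $|\Gamma|$ is assumed to be a Kunen cardinal the hypotheses of Corollary~\ref{ell1sums} hold verbatim, and its conclusion $\Ba(X_w)=\Bo(X)$ transports along the isometric isomorphism to give $\Ba(\ell^1(\Gamma)_w)=\Bo(\ell^1(\Gamma))$.

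The only thing to spell out is that both $\sigma$-algebras involved are invariant under isometric isomorphism: the norm Borel $\sigma$-algebra $\Bo(\cdot)$ depends only on the norm topology, and $\Ba(\cdot_w)=\sigma((\cdot)^{\ast})$ depends only on the weak topology (equivalently, on the dual); an isometric isomorphism preserves both structures and hence carries one $\sigma$-algebra onto the other. This makes the passage from $X$ to $\ell^1(\Gamma)$ automatic, and I expect no genuine obstacle here—this corollary is the intended immediate specialization of the preceding one.

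Alternatively, one can give a route that leans directly on the main theorem. As noted in the introduction, $\ell^1(\Gamma)$ embeds isometrically as a closed subspace of $C(2^\Gamma)$; Theorem~\ref{thm:Main} yields $\Ba(C_p(2^\Gamma))=\Bo(C(2^\Gamma))$, which collapses the entire chain of inclusions from the introduction and in particular forces $\Ba(C(2^\Gamma)_w)=\Bo(C(2^\Gamma))$. Taking traces on the closed subspace $\ell^1(\Gamma)$ then finishes the argument: the trace of $\Ba(C(2^\Gamma)_w)$ is $\Ba(\ell^1(\Gamma)_w)$ by the Hahn--Banach remark recorded in the introduction, while the trace of $\Bo(C(2^\Gamma))$ is $\Bo(\ell^1(\Gamma))$ because forming Borel $\sigma$-algebras of metric subspaces commutes with taking traces. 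The one mildly delicate point in this second approach is the latter trace identity for the norm Borel sets, but it is standard for subspaces of metric spaces and presents no real difficulty.
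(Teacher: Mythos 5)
Your proposal is correct and matches the paper's intent exactly: Corollary~\ref{cor:Fremlin} is stated in the paper without proof precisely because it is the immediate specialization of Corollary~\ref{ell1sums} to $X_\alpha=\R$, which is your first route. Your alternative argument (embedding $\ell^1(\Gamma)$ into $C(2^\Gamma)$, applying Theorem~\ref{thm:Main}, and taking traces via Hahn--Banach and the standard Borel trace identity) is also sound, but it is not really a different proof --- it just unfolds the paper's own proof of Corollary~\ref{ell1sums} in this special case.
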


\begin{remark}\label{remark:dualCK}
Let $K$ be a compact space. 
\begin{enumerate}
\item[(i)] Suppose there exists a maximal family $\{\mu_\alpha:\alpha<\kappa\}$ of mutually singular Radon probabilities on~$K$
such that:
\begin{itemize}
\item $\kappa$ is a Kunen cardinal, 
\item each $L^1(\mu_\alpha)$ is separable. 
\end{itemize}
Then $\Ba(C(K)_w^*)=\Bo(C(K)^*)$, because 
$C(K)^*$ is isomorphic to the space $\bigoplus_{\ell^1} \{L^1(\mu_\alpha):\alpha < \kappa\}$ (cf. 
\cite[proof of Proposition~4.3.8]{alb-kal}).

\item[(ii)] The existence of a family $\{\mu_\alpha:\alpha<\kappa\}$ as in~(i) is guaranteed if:
\begin{itemize}
\item $|K|=\mathfrak{c}$ is Kunen, 
\item ${\rm span}\{\delta_t:t\in K\}$ is sequentially $w^*$-dense in~$C(K)^*$,
\item $L^1(\mu)$ is separable for every Radon probability $\mu$ on~$K$.
\end{itemize}
Thus, assuming that $\mathfrak{c}$ is Kunen, the equality $\Ba(C(K)_w^*)=\Bo(C(K)^*)$ holds true whenever
$|K|=\mathfrak{c}$ and $K$ belongs to one of the following classes of compacta: Eberlein, Corson (under MA + non CH), Rosenthal,
linearly ordered, Radon-Nikod\'{y}m, etc. (see e.g.
\cite{dza-kun,mer-J} and the references therein).
\end{enumerate}
\end{remark}

\subsection{Proof of Lemma~\ref{Mainlemma}}\label{subsection:lemma}

This subsection is devoted to prove Lemma~\ref{Mainlemma} above. The proof
is divided into several auxiliary lemmas. Throughout, 
$\tau$ is an $n$-type, $(Y,\Sigma)$ is a measurable space and $\Phi:\Gamma^n \to \mathcal{P}(Y)$ 
is a multifunction satisfying: 
\begin{itemize}
\item[(S)] For each $U\subset\Gamma$ and each closed set $I\subset \mathbb{R}$, the sets
$$
	\Phi\bigl(\{\gamma\in\Gamma^n : \, \tau(1_U(\gamma_1),\dots,1_U(\gamma_n))\subset I\}\bigr)
$$
$$
	\Phi\bigl(\{\gamma\in\Gamma^n : \, \tau(1_U(\gamma_1),\dots,1_U(\gamma_n))\cap I = \emptyset\}\bigr)
$$
are $\Sigma$-separated. 
\end{itemize}

\begin{defi}
Let $E$ be an equivalence relation on $\{1,\dots,n\}\times\{0,1\}$. We say that $E$ is a {\em $\tau$-proximality relation} (and we write $E\in Prox(\tau)$) 
if $\tau(\gamma^0)\cap \tau(\gamma^1) \neq \emptyset$ whenever $\gamma^0,\gamma^1\in 2^n$ satisfy
$$
	(p,i) E (q,j) \ \impli \ \gamma^i_p = \gamma^j_q
$$
for every $(p,i),(q,j)\in \{1,\dots,n\}\times\{0,1\}$.
\end{defi}

\begin{lem}\label{lem:Prox}
Let $\gamma^0,\gamma^1\in\Gamma^n$. 
The following statements are equivalent:
\begin{enumerate}
\item[(i)] $\gamma^0,\gamma^1$ are $\tau$-proximal.
\item[(ii)] There is $E\in Prox(\tau)$ such that 
$$
	(p,i) E (q,j) \ \impli \ \gamma^i_p = \gamma^j_q
$$
for every $(p,i),(q,j)\in \{1,\dots,n\}\times\{0,1\}$.
\end{enumerate}
\end{lem}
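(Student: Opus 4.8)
The plan is to translate the ``for all $U \sub \Gamma$'' quantifier in the definition of $\tau$-proximality into a purely combinatorial statement about the equality pattern of the coordinates of $\gamma^0$ and $\gamma^1$. For $U \sub \Gamma$ and $i\in\{0,1\}$ I would write $v^i(U):=(1_U(\gamma^i_1),\dots,1_U(\gamma^i_n))\in 2^n$, so that by definition $\gamma^0,\gamma^1$ are $\tau$-proximal precisely when $\tau(v^0(U))\cap\tau(v^1(U))\neq\emptyset$ for every $U\sub\Gamma$. The candidate relation $E$ for part~(ii) is the equality pattern $E_0$ given by $(p,i) E_0 (q,j) \Leftrightarrow \gamma^i_p=\gamma^j_q$; this is visibly an equivalence relation on $\{1,\dots,n\}\times\{0,1\}$ and it trivially satisfies the implication $(p,i) E_0 (q,j) \impli \gamma^i_p=\gamma^j_q$.

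For (ii)$\impli$(i) I would argue directly. Fix $E\in Prox(\tau)$ with $(p,i) E (q,j) \impli \gamma^i_p=\gamma^j_q$ and let $U\sub\Gamma$ be arbitrary. Whenever $(p,i) E (q,j)$ we then have $\gamma^i_p=\gamma^j_q$, hence $1_U(\gamma^i_p)=1_U(\gamma^j_q)$; that is, the pair $(v^0(U),v^1(U))$ of binary vectors satisfies exactly the hypothesis appearing in the definition of $Prox(\tau)$. Consequently $\tau(v^0(U))\cap\tau(v^1(U))\neq\emptyset$, and since $U$ was arbitrary this is precisely $\tau$-proximality.

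For (i)$\impli$(ii) I would take $E=E_0$ and verify $E_0\in Prox(\tau)$. Let $w^0,w^1\in 2^n$ satisfy $(p,i) E_0 (q,j) \impli w^i_p=w^j_q$. The decisive step is to realize this prescribed pattern by a single subset of $\Gamma$: set $U:=\{\gamma^i_p:\, w^i_p=1\}$. Then $1_U(\gamma^i_p)=w^i_p$ for every $(p,i)$, because if $w^i_p=1$ then $\gamma^i_p\in U$ by definition, whereas if $\gamma^i_p\in U$, say $\gamma^i_p=\gamma^j_q$ with $w^j_q=1$, then $(p,i) E_0 (q,j)$ forces $w^i_p=w^j_q=1$. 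Thus $v^i(U)=w^i$, and $\tau$-proximality yields $\tau(w^0)\cap\tau(w^1)=\tau(v^0(U))\cap\tau(v^1(U))\neq\emptyset$, so $E_0\in Prox(\tau)$ and (ii) holds.

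The only delicate point, and hence the crux of the argument, is precisely this construction of $U$ in the last paragraph: a single set must simultaneously realize both prescribed binary vectors $w^0$ and $w^1$ along the coordinates $\gamma^i_p$, and this can be done without conflict exactly because those coordinates that are forced to coincide in $\Gamma$ are assigned equal values by the $E_0$-compatibility hypothesis. Everything else is a routine unwinding of the definitions, and no cardinality or set-theoretic input is required here.
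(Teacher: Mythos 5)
Your proof is correct and takes essentially the same route as the paper's: for (i)$\impli$(ii) both arguments choose the equality-pattern relation $E_0$ and realize an arbitrary $E_0$-compatible pair $w^0,w^1\in 2^n$ by the single set $U=\{\gamma^i_p:\,w^i_p=1\}$, and for (ii)$\impli$(i) both simply apply the defining property of $Prox(\tau)$ to the indicator vectors $(1_U(\gamma^i_1),\dots,1_U(\gamma^i_n))$ of an arbitrary $U\subseteq\Gamma$. The only difference is cosmetic: you spell out the verification that $1_U(\gamma^i_p)=w^i_p$ (which is indeed the point where $E_0$-compatibility is used), a detail the paper states without proof.
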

\begin{proof}
(i)$\impli$(ii). The equivalence relation~$E$ on $\{1,\dots,n\}\times \{0,1\}$ defined by 
$$
	(p,i) E (q,j) \ \Leftrightarrow  \ \gamma^i_p = \gamma^j_q
$$
is a $\tau$-proximality relation. Indeed, let $\delta^0,\delta^1\in 2^n$ satisfy the condition: 
$$
	(p,i) E (q,j) \ \impli \ \delta^i_p = \delta^j_q
$$
for every $(p,i),(q,j)\in \{1,\dots,n\}\times\{0,1\}$. Let $U \subset \Gamma$
be the set made up of all $\gamma^0_p$'s with $\delta^0_p=1$ and all
$\gamma^1_p$'s with $\delta^1_p=1$. Then
$$
	\tau(1_U(\gamma^i_1),\dots,1_U(\gamma^i_n))=\tau(\delta^i)
	\quad
	\mbox{for }i\in \{0,1\}
$$
and so the $\tau$-proximality of $\gamma^0$ and~$\gamma^1$ implies that $\tau(\delta^0)\cap \tau(\delta^1)\neq \emptyset$.

(ii)$\impli$(i). Fix $U \subset \Gamma$ and set
$$
	\delta^i:=(1_U(\gamma^i_1),\dots,1_U(\gamma^i_n)) \in 2^n 
	\quad 
	\mbox{for }i\in \{0,1\}.
$$
Observe that if $(p,i) E (q,j)$ then $\gamma^i_p=\gamma^j_q$ and so $1_U(\gamma^i_p)=1_U(\gamma^j_q)$.
Bearing in mind that $E\in Prox(\tau)$, we conclude that
$$
	\tau(1_U(\gamma^0_1),\dots,1_U(\gamma^0_n))
	\cap
	\tau(1_U(\gamma^1_1),\dots,1_U(\gamma^1_n))
	=
	\tau(\delta^0)
	\cap 
	\tau(\delta^1)
	\neq \emptyset.
$$
This shows that $\gamma^0$ and $\gamma^1$ are $\tau$-proximal.
\end{proof}

\begin{defi}
Let $E\in Prox(\tau)$. 
\begin{enumerate}
\item[(i)] An equivalence class $\mathcal{C}$ of $E$ is called a {\em linking class} if 
$\mathcal{C}=[(p,0)]=[(q,1)]$ for some $p,q\in \{1,\dots,n\}$. We denote by $\ell_E$ the set of linking equivalence classes of~$E$.

\item[(ii)] Let $i\in\{0,1\}$ and $A\subset \Gamma^n$. We define $L^i_E(A)$ as the set of 
all $\tilde{\gamma} \in \Gamma^{\ell_E}$ for which there is $\gamma\in A$ such that:
\begin{itemize}
\item $\gamma_{p} = \gamma_{q}$ whenever $(p,i) E (q,i)$;
\item $\gamma_k = \tilde{\gamma}_{[(k,i)]}$ whenever $[(k,i)]\in\ell_E$.
\end{itemize}

\end{enumerate}
\end{defi}

\begin{lem}\label{lem:tauseparation}
Let $A,B\subset \Gamma^n$. The following statements are equivalent:
\begin{enumerate}
\item[(i)] $A$ and $B$ are $\tau$-separated. 
\item[(ii)] $L^0_E(A) \cap L^1_E(B) = \emptyset$ for every $E\in Prox(\tau)$.
\end{enumerate}
\end{lem}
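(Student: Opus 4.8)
The plan is to prove both implications in contrapositive form, so that in each direction I start from a single witness and transport it across the correspondence set up by Lemma~\ref{lem:Prox}. The bridge between the two notions is the ``equality pattern'' of a pair: given $\gamma^0,\gamma^1\in\Gamma^n$, one records which coordinates among $\gamma^0_1,\dots,\gamma^0_n,\gamma^1_1,\dots,\gamma^1_n$ coincide, and Lemma~\ref{lem:Prox} says precisely that $\tau$-proximality of the pair is detected by this pattern lying in $Prox(\tau)$. The whole statement then reduces to bookkeeping: a common element of $L^0_E(A)\cap L^1_E(B)$ is exactly the data needed to glue a member of $A$ and a member of $B$ into a $\tau$-proximal pair whose equality pattern is a coarsening of~$E$.

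For ``$\neg$(i)$\Rightarrow\neg$(ii)'', suppose $A$ and $B$ are not $\tau$-separated, so there are $\gamma\in A$ and $\delta\in B$ which are $\tau$-proximal. Setting $\gamma^0:=\gamma$ and $\gamma^1:=\delta$, Lemma~\ref{lem:Prox} yields $E\in Prox(\tau)$ with $(p,i)E(q,j)\Rightarrow \gamma^i_p=\gamma^j_q$. I would then define $\tilde\gamma\in\Gamma^{\ell_E}$ on each linking class $\mathcal{C}=[(p,0)]=[(q,1)]$ by $\tilde\gamma_{\mathcal{C}}:=\gamma^0_p$. This is well defined because the implication forces all level-$0$ members of $\mathcal{C}$ to share the value $\gamma^0_p$, all level-$1$ members to share $\gamma^1_q$, and the cross-relation $(p,0)E(q,1)$ to give $\gamma^0_p=\gamma^1_q$. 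A direct check against the two defining clauses then shows that $\gamma$ witnesses $\tilde\gamma\in L^0_E(A)$ and $\delta$ witnesses $\tilde\gamma\in L^1_E(B)$, so the intersection is nonempty.

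Conversely, for ``$\neg$(ii)$\Rightarrow\neg$(i)'', fix $E\in Prox(\tau)$ and some $\tilde\gamma\in L^0_E(A)\cap L^1_E(B)$, with witnesses $\gamma\in A$ (at level $0$) and $\delta\in B$ (at level $1$). I would verify that the merged pair $\gamma^0:=\gamma$, $\gamma^1:=\delta$ satisfies $(p,i)E(q,j)\Rightarrow \gamma^i_p=\gamma^j_q$ and then invoke Lemma~\ref{lem:Prox} to conclude that $\gamma$ and $\delta$ are $\tau$-proximal, contradicting $\tau$-separation. The same-level cases are immediate from the consistency clauses $\gamma_p=\gamma_q$ when $(p,0)E(q,0)$ and $\delta_p=\delta_q$ when $(p,1)E(q,1)$ built into the definitions of $L^0_E$ and $L^1_E$. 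The main obstacle is the cross case $(p,0)E(q,1)$: here one must recognize that such a relation forces $[(p,0)]=[(q,1)]$ to be a linking class $\mathcal{C}\in\ell_E$, and then read off from the two witness conditions that $\gamma_p=\tilde\gamma_{\mathcal{C}}=\delta_q$. This is the one place where the linking-class index $\ell_E$ and the shared value of $\tilde\gamma$ do the real work, converting the purely combinatorial coincidence of $L^0_E(A)$ and $L^1_E(B)$ into an honest equality between a coordinate of $\gamma$ and a coordinate of $\delta$.
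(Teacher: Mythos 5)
Your proof is correct and follows essentially the same route as the paper: both implications are proved in contrapositive form, with Lemma~\ref{lem:Prox} translating $\tau$-proximality into an equality pattern $E\in Prox(\tau)$, and the element $\tilde\gamma\in\Gamma^{\ell_E}$ defined on linking classes serving as the glue in both directions. The only difference is that you spell out the well-definedness of $\tilde\gamma$ and the cross-case $(p,0)E(q,1)$ verification, which the paper leaves implicit.
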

\begin{proof}
(i)$\impli$(ii).
Suppose that $L^0_E(A) \cap L^1_E(B) \neq \emptyset$ for some $E\in Prox(\tau)$. Take
$\tilde{\gamma} \in L^0_E(A)\cap L^1_E(B)$ and choose $\gamma^0\in A$, $\gamma^1\in B$, 
such that for $i\in \{0,1\}$ we have
\begin{center}
$\gamma^i_p=\gamma^i_q$ whenever $(p,i)E(q,i)$ and $\gamma^i_k=\tilde{\gamma}_{[(k,i)]}$ for every $[(k,i)]\in \ell_E$.
\end{center}
Therefore, 	$\gamma^i_p = \gamma^j_q$ whenever $(p,i) E (q,j)$. An appeal to
Lemma~\ref{lem:Prox} ensures that $\gamma^0$ and $\gamma^1$ are $\tau$-proximal, so 
$A$ and $B$ are not $\tau$-separated.

(ii)$\impli$(i).
If $A$ and $B$ are not $\tau$-separated, then (by Lemma~\ref{lem:Prox}) 
there exist $\gamma^0\in A$, $\gamma^1\in B$ and $E\in Prox(\tau)$ such that
$$
	(p,i) E (q,j) \ \impli \ \gamma^i_p = \gamma^j_q
$$
for every $(p,i),(q,j)\in \{1,\dots,n\}\times\{0,1\}$. Then we can define 
$\tilde{\gamma}\in \Gamma^{\ell_E}$
by saying that $\tilde{\gamma}_{[(p,i)]}:=\gamma^i_p$
for every $[(p,i)]\in \ell_E$. Clearly, 
$\tilde{\gamma} \in L^0_E(A)\cap L^1_E(B)$. 
\end{proof}

\begin{remark}\label{pieceseparation}
Let $U_n,V_n \subset Y$, $n\in \Nat$. If 
$U_n$ and $V_m$ are $\Sigma$-separated for every $n,m\in \Nat$, then 
$\bigcup_{n\in \Nat}U_n$ and $\bigcup_{n\in \Nat}V_n$ are $\Sigma$-separated as well.
\end{remark}
\begin{proof}
For each $n,m\in \Nat$, fix $S_{n,m}\in \Sigma$ such that $U_n\subset S_{n,m}$ and $V_m\cap S_{n,m}= \emptyset$. Then $S := \bigcup_{n\in \Nat}\bigcap_{m\in \Nat}S_{n,m}\in\Sigma$ satisfies 
$\bigcup_{n\in \Nat}U_n \subset S$ and $\left(\bigcup_{n\in \Nat}V_n\right) \cap S=\emptyset$.
\end{proof}

\begin{lem}\label{monotoneclassE}
Let $E_0\in Prox(\tau)$. For each $E\in Prox(\tau)\setminus\{E_0\}$, let us fix disjoint sets $X_E,Y_E\subset\Gamma^{\ell_E}$. 
Let $\mathfrak V$ be the family of all $W\subset\Gamma^{\ell_{E_0}}$ for which the following statement holds:
\begin{quote}
``If $A,B\subset \Gamma^n$ satisfy
\begin{itemize}
\item $L^0_E(A)\subset X_E$ and $L^1_E(B)\subset Y_E$ for every $E\in Prox(\tau)\setminus \{E_0\}$,
\item $L^0_{E_0}(A)\subset W$ and $L^1_{E_0}(B)\cap W = \emptyset$,
\end{itemize}
then $\Phi(A)$ and $\Phi(B)$ are $\Sigma$-separated.''
\end{quote}
Then $\mathfrak V$ is closed under countable unions and countable intersections.
\end{lem}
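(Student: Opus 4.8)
The plan is to exploit the asymmetry between the two defining conditions: membership of $W$ in $\mathfrak V$ constrains $A$ through $L^0_{E_0}(A)\subset W$ and $B$ through $L^1_{E_0}(B)\cap W=\emptyset$. Consequently, closure under countable unions will be obtained by decomposing $A$, and closure under countable intersections by decomposing $B$; the two arguments are formally dual. Throughout I would use two elementary facts: $\Phi$ carries unions to unions, so that $\Phi(\bigcup_m A_m)=\bigcup_m\Phi(A_m)$, and each operator $L^i_E$ is monotone, so that $S\subset S'$ forces $L^i_E(S)\subset L^i_E(S')$. I would also record the observation that any $\gamma\in\Gamma^n$ failing the internal consistency $\gamma_p=\gamma_q$ (for all $(p,i)\,E_0\,(q,i)$) contributes nothing to $L^i_{E_0}$, and may therefore be distributed freely among the pieces of a decomposition.

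For unions, let $W_m\in\mathfrak V$ for $m\in\Nat$, put $W=\bigcup_m W_m$, and suppose $A,B$ satisfy the two displayed hypotheses relative to $W$. The key point is that each internally consistent $\gamma\in A$ (for $(0,E_0)$) has its image $\tilde\gamma\in\Gamma^{\ell_{E_0}}$ lying in $L^0_{E_0}(A)\subset W=\bigcup_m W_m$, hence in some $W_m$. I would then set
$$
A_m:=\{\gamma\in A:\gamma\text{ internally consistent for }(0,E_0)\text{ and }\tilde\gamma\in W_m\}\cup\{\gamma\in A:\gamma\text{ not internally consistent for }(0,E_0)\},
$$
so that $\bigcup_m A_m=A$ and $L^0_{E_0}(A_m)\subset W_m$, while $L^0_E(A_m)\subset L^0_E(A)\subset X_E$ for $E\ne E_0$ by monotonicity. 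Since $W_m\subset W$, the hypothesis on $B$ gives $L^1_{E_0}(B)\cap W_m=\emptyset$, and the conditions $L^1_E(B)\subset Y_E$ are untouched. Thus each pair $(A_m,B)$ meets the defining property of $W_m\in\mathfrak V$, so $\Phi(A_m)$ and $\Phi(B)$ are $\Sigma$-separated. As $\Phi(A)=\bigcup_m\Phi(A_m)$, Remark~\ref{pieceseparation} yields that $\Phi(A)$ and $\Phi(B)$ are $\Sigma$-separated, i.e.\ $W\in\mathfrak V$.

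For intersections, let $W=\bigcap_m W_m$ with $W_m\in\mathfrak V$, and let $A,B$ satisfy the hypotheses relative to this $W$. Now the roles reverse: $L^0_{E_0}(A)\subset W\subset W_m$ holds automatically for every $m$, whereas $L^1_{E_0}(B)\cap\bigcap_m W_m=\emptyset$ merely says that each image $\tilde\gamma\in L^1_{E_0}(B)$ misses at least one $W_m$. I would decompose $B$ instead, setting
$$
B_m:=\{\gamma\in B:\gamma\text{ internally consistent for }(1,E_0)\text{ and }\tilde\gamma\notin W_m\}\cup\{\gamma\in B:\gamma\text{ not internally consistent for }(1,E_0)\},
$$
so that $\bigcup_m B_m=B$, $L^1_{E_0}(B_m)\cap W_m=\emptyset$, and $L^1_E(B_m)\subset Y_E$ for $E\ne E_0$. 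Each pair $(A,B_m)$ then meets the defining property of $W_m$, so $\Phi(A)$ and $\Phi(B_m)$ are $\Sigma$-separated; applying Remark~\ref{pieceseparation} to $\Phi(B)=\bigcup_m\Phi(B_m)$ shows $W\in\mathfrak V$.

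The arguments are essentially bookkeeping, and I do not anticipate a genuine obstacle; the one conceptual point to get right is the locality dichotomy just described — the union condition is local on $A$ (each consistent $\gamma$ lands in a single $W_m$) while the intersection condition is local on $B$ (each consistent $\gamma$ avoids a single $W_m$) — since this is precisely what licenses the two decompositions and explains why unions pair with $A$ and intersections with $B$.
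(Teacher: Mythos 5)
Your proof is correct and follows essentially the same route as the paper's: your sets $A_m$ and $B_m$ coincide exactly with the paper's $A_m:=\{\gamma\in A: L^0_{E_0}(\{\gamma\})\subset W_m\}$ and $B_m:=\{\gamma\in B: L^1_{E_0}(\{\gamma\})\cap W_m=\emptyset\}$, since the internally inconsistent $\gamma$ (those with empty image under $L^i_{E_0}$) vacuously satisfy both conditions and so land in every piece, which is precisely your explicit handling of them. The decomposition of $A$ for unions, of $B$ for intersections, the monotonicity of $L^i_E$, and the appeal to Remark~\ref{pieceseparation} all match the paper's argument.
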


\begin{proof}
Let $(W_m)_{m\in\Nat}$ be an arbitrary sequence in~$\mathfrak{V}$.
We shall prove first that $W:=\bigcup_{m\in \Nat} W_m\in \mathfrak{V}$. For let $A,B\subset \Gamma^n$ be sets satisfying 
\begin{itemize}
\item[(i)] $L^0_E(A)\subset X_E$ and $L^1_E(B)\subset Y_E$ for every $E\in Prox(\tau)\setminus \{E_0\}$,
\item[(ii)] $L^0_{E_0}(A)\subset W$ and $L^1_{E_0}(B)\cap W = \emptyset$.
\end{itemize}
Note that for every $\gamma\in \Gamma^n$ the set $L^0_{E_0}(\{\gamma\})$ is either empty
or a singleton. For each $m\in \Nat$, define 
$$
	A_m := \{\gamma\in A : \, L^0_{E_0}(\{\gamma\})\subset W_m\}.
$$
Since $\bigcup_{\gamma \in A} L^0_{E_0}(\{\gamma\})=L^0_{E_0}(A) \subset W$, we have $A=\bigcup_{m\in \Nat} A_m$. 
Thus, bearing in mind Remark~\ref{pieceseparation}, in order to prove that $\Phi(A)=\bigcup_{m\in \Nat} \Phi(A_m)$ and $\Phi(B)$ are $\Sigma$-separated
it suffices to check that, for each $m\in \Nat$, the sets $\Phi(A_m)$ and $\Phi(B)$ are $\Sigma$-separated.
Fix $m\in \Nat$ and observe that:
\begin{itemize}
\item $L^0_E(A_m)\subset L^0_E(A) \subset X_E$ and $L^1_E(B)\subset Y_E$ for $E\in Prox(\tau)\setminus \{E_0\}$
(by~(i)),
\item $L^0_{E_0}(A_m)= \bigcup_{\gamma \in A_m} L^0_{E_0}(\{\gamma\}) \subset W_m$  
and $L^1_{E_0}(B)\cap W_m = \emptyset$ (by~(ii)).
\end{itemize}
Since $W_m\in\mathfrak V$ we conclude that $\Phi(A_m)$ and $\Phi(B)$ are $\Sigma$-separated, as desired. 
It follows that $W\in \mathfrak{V}$.

We now prove that $W':=\bigcap_{m\in \Nat} W_m \in \mathfrak{V}$. Fix $A,B\subset \Gamma^n$  
such that 
\begin{itemize}
\item[(i')] $L^0_E(A)\subset X_E$ and $L^1_E(B)\subset Y_E$ for every $E\in Prox(\tau)\setminus \{E_0\}$,
\item[(ii')] $L^0_{E_0}(A)\subset W'$ and $L^1_{E_0}(B)\cap W' = \emptyset$.
\end{itemize}
For each $m\in \Nat$ we define 
$$ 
	B_m := \{\gamma\in B : \, L^1_{E_0}(\{\gamma\})\cap W_m=\emptyset\}.
$$
Since each $L^1_{E_0}(\{\gamma\})$ is either empty or a singleton, and
$$
	\bigcup_{\gamma\in B}L^1_{E_0}(\{\gamma\})=L^1_{E_0}(B) \subset \Gamma^{\ell_{E_0}} \setminus W'=\bigcup_{m\in \Nat}
	\Gamma^{\ell_{E_0}} \setminus W_m,
$$ 
we have $B=\bigcup_{m\in \Nat} B_m$. Therefore, to show that $\Phi(A)$ and $\Phi(B)=\bigcup_{m\in \Nat} \Phi(B_m)$ 
are $\Sigma$-separated
it is enough to check that, for each $m\in \Nat$, the sets $\Phi(A)$ and $\Phi(B_m)$ are $\Sigma$-separated.
This follows immediately from the facts that $W_m\in \mathfrak{V}$ and
\begin{itemize}
\item $L^0_E(A)\subset X_E$ and $L^1_E(B_m)\subset L^1_E(B) \subset Y_E$ for $E\in Prox(\tau)\setminus\{E_0\}$ (by~(i')).
\item $L^0_{E_0}(A)\subset W'\subset W_m$ (by~(ii')) and 
$$
	L^1_{E_0}(B_m)=
	\bigcup_{\gamma \in B_m}L^1_{E_0}(\{\gamma\}) \subset \Gamma^{\ell_{E_0}}\setminus W_m.
$$
\end{itemize}
This proves that $W'\in \mathfrak{V}$ and we are done.
\end{proof}

\begin{defi}
Let $\Omega$ be a set and $A_1,\dots,A_m \in \mathcal{P}(\Omega)$. We say that $C \subset \Omega$
is an {\em atom} of the algebra on~$\Omega$ generated by $A_1,\dots,A_m$ if $C$ is nonempty and can be written
as $C=\bigcap_{i=1}^m D_i$ where each $D_i\in\{A_i,\Omega \setminus A_i\}$.  
\end{defi}

\begin{defi}
A set $W \subset \Gamma^n$ is called a {\em product} if it can be expressed as $W=\prod_{i=1}^n W_i$ for some
$W_i \subset \Gamma$ (which are called the {\em factors} of~$W$). 
\end{defi}

\begin{lem}\label{productseparation}
Let $A,B\subset \Gamma^n$ be products. If 
$A$ and $B$ are $\tau$-separated, then $\Phi(A)$ and $\Phi(B)$ are $\Sigma$-separated. 
\end{lem}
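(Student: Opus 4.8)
The plan is to reduce, via the \emph{atom} decomposition introduced just above, to the case where both $A$ and $B$ are products whose factors are atoms, and then to produce a single separating test through condition~(S). First I would let $\mathcal{D}$ denote the finite algebra on $\Gamma$ generated by the $2n$ factors $A_1,\dots,A_n,B_1,\dots,B_n$. Since each $A_i$ is a union of atoms of $\mathcal{D}$, the product $A=\prod_i A_i$ is a \emph{finite} union of products each of whose factors is an atom, and likewise for $B$; write $A=\bigcup_k A^{(k)}$ and $B=\bigcup_l B^{(l)}$ accordingly. Each $A^{(k)}\subset A$ and each $B^{(l)}\subset B$, so every pair $(A^{(k)},B^{(l)})$ is still $\tau$-separated. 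By Remark~\ref{pieceseparation} it suffices to separate $\Phi(A^{(k)})$ from $\Phi(B^{(l)})$ for every pair, so from now on I may assume that $A$ and $B$ are themselves products whose factors are atoms of $\mathcal{D}$.

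The crux is the following claim: for such $A$ and $B$, being $\tau$-separated is equivalent to the existence of an \emph{atom-union} $U\subset\Gamma$ (a union of atoms of $\mathcal{D}$) with $\tau(1_U(\gamma_1),\dots,1_U(\gamma_n))\cap\tau(1_U(\delta_1),\dots,1_U(\delta_n))=\emptyset$ simultaneously for all $\gamma\in A$ and $\delta\in B$. The key point is that, when $A$ is a product of atoms and $U$ is an atom-union, the tuple $(1_U(\gamma_1),\dots,1_U(\gamma_n))$ is \emph{constant} on $A$ (its $i$-th entry is $1$ precisely when the $i$-th atom-factor of $A$ is contained in $U$); call it $\alpha_A(U)\in 2^n$, and define $\alpha_B(U)$ likewise. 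To construct $U$ I would fix one representative $r(D)$ in each atom $D$ and form the maximally-coincident witnesses $\gamma^*\in A$ and $\delta^*\in B$ whose $i$-th coordinates equal the representatives of their atom-factors; then $\gamma^*_p=\gamma^*_q$, $\delta^*_p=\delta^*_q$ and $\gamma^*_p=\delta^*_q$ hold exactly when the corresponding atoms coincide. As $A,B$ are $\tau$-separated, $\gamma^*$ and $\delta^*$ are not $\tau$-proximal, so by Lemma~\ref{lem:Prox} the coincidence relation $E^*$ on $\{1,\dots,n\}\times\{0,1\}$ induced by $(\gamma^*,\delta^*)$ does not lie in $Prox(\tau)$. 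Unwinding the definition of $Prox(\tau)$ then yields $\alpha,\beta\in 2^n$ such that $(p,i)E^*(q,j)$ forces the corresponding entries of $\alpha,\beta$ to agree while $\tau(\alpha)\cap\tau(\beta)=\emptyset$. Because $\alpha,\beta$ respect $E^*$, the prescription ``assign $\alpha_p$ to the atom of $\gamma^*_p$ and $\beta_q$ to the atom of $\delta^*_q$'' is well defined and consistent, and letting $U$ be the union of the atoms assigned value~$1$ gives $\alpha_A(U)=\alpha$ and $\alpha_B(U)=\beta$, which is exactly what the claim requires.

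Finally, with such a $U$ in hand I would put $I:=\tau(\alpha_A(U))$, a closed (indeed compact) set. Then $A\subset\{\gamma\in\Gamma^n:\tau(1_U(\gamma_1),\dots,1_U(\gamma_n))\subset I\}$ and $B\subset\{\gamma\in\Gamma^n:\tau(1_U(\gamma_1),\dots,1_U(\gamma_n))\cap I=\emptyset\}$, so condition~(S) furnishes $S\in\Sigma$ with $\Phi(A)\subset S$ and $\Phi(B)\cap S=\emptyset$; this is the desired $\Sigma$-separation. Note that the Kunen hypothesis plays no role here, entering only in the passage from products to arbitrary sets in Lemma~\ref{Mainlemma}. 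The main obstacle is the claim of the second paragraph: one must verify that the maximally-coincident pair $\gamma^*,\delta^*$ really represents the worst case, so that a \emph{single} atom-union $U$ (yielding constant signatures on all of $A$ and all of $B$) can separate the two products. The careful bookkeeping lies in the translation, through Lemma~\ref{lem:Prox}, between a coincidence relation failing to be in $Prox(\tau)$ and an atom-union producing disjoint type-values.
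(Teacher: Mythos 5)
Your proof is correct and follows the same skeleton as the paper's: reduce via Remark~\ref{pieceseparation} to the case where $A$ and $B$ are products of atoms, pass to representative points $\gamma^*\in A$, $\delta^*\in B$, turn non-$\tau$-proximality of this pair into a separating set that is a \emph{union of atoms} (so that the signature $(1_U(\gamma_1),\dots,1_U(\gamma_n))$ is constant on each of $A$ and $B$), and conclude by applying condition~(S) to $I:=\tau(\alpha_A(U))$. The one place where you diverge is the construction of the atom-union. The paper does this more directly: from the definition of $\tau$-proximality it takes an arbitrary witness $U\subset\Gamma$ with
$\tau(1_U(\gamma^*_1),\dots,1_U(\gamma^*_n))\cap\tau(1_U(\delta^*_1),\dots,1_U(\delta^*_n))=\emptyset$
and replaces it by the atom-union $V:=\bigcup\{V_j:\gamma_j\in U\}$, observing that $1_V$ and $1_U$ agree on all the representatives $\gamma_j$, so the two type-values, and hence their disjointness, are unchanged. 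You instead route through Lemma~\ref{lem:Prox} and the $Prox(\tau)$ formalism: the coincidence relation $E^*$ of $(\gamma^*,\delta^*)$ fails to lie in $Prox(\tau)$, which yields abstract signatures $\alpha,\beta\in 2^n$ with $\tau(\alpha)\cap\tau(\beta)=\emptyset$ respecting $E^*$, and the consistency of your assignment of $0/1$ values to atoms is exactly what respecting $E^*$ guarantees. Both mechanisms are valid; the paper's rounding step is shorter and keeps Lemma~\ref{lem:Prox} out of this particular lemma, while your version makes the combinatorial content explicit. One cosmetic slip: elements of $\mathfrak{I}$ are closed nonempty intervals and need not be compact (they may be unbounded), but condition~(S) only requires $I$ to be closed, so your parenthetical ``indeed compact'' is harmless.
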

\begin{proof}
Write $A=\prod_{i=1}^n W_i$ and $B=\prod_{i=1}^n W'_i$.
Let $V_1,\dots,V_{m}$ 
be the atoms of the algebra on~$\Gamma$ generated by $W_1,\dots,W_n$ and $W'_1,\dots,W'_n$.
Then $A$ (resp. $B$) is the union of all products of the form $\prod_{i=1}^n V_{k_i}$
where $V_{k_i}\subset W_i$ (resp. $V_{k_i} \subset W'_i$). Thus, an appeal to
Remark~\ref{pieceseparation} allows us to assume that $A$ and~$B$ are of the form
$$
	A=\prod_{i=1}^n V_{k_i} \qquad
	B=\prod_{i=1}^n V_{r_i}	
$$
for some $k_i,r_i\in \{1,\dots,m\}$.

For each $j=1,\dots,m$ we choose $\gamma_j\in V_j$. Define $\gamma^0\in A$ and $\gamma^1\in B$ by
declaring $\gamma^0_i:=\gamma_{k_i}$ and $\gamma^1_i:=\gamma_{r_i}$ for $i\in\{1,\dots,n\}$.
Since $A$ and $B$ are $\tau$-separated, $\gamma^0$ and $\gamma^1$ 
are not $\tau$-proximal, so there exists $U\subset \Gamma$ such that
$$
	\tau(1_U(\gamma_{k_1}),\dots,1_U(\gamma_{k_n})) 
	\cap 
	\tau(1_U(\gamma_{r_1}),\dots,1_U(\gamma_{r_n})) = \emptyset.
$$
Define $V:= \bigcup\{V_j : \gamma_j\in U\} \subset \Gamma$. Observe that for each $i\in \{1,\dots,n\}$ we have
$\gamma_{k_i}\in U$ if and only if $\gamma_{k_i}\in V$, and
$\gamma_{r_i}\in U$ if and only if $\gamma_{r_i}\in V$. Therefore
\begin{equation}\label{equation:intersection}
	\tau(1_V(\gamma_{k_1}),\dots,1_V(\gamma_{k_n})) 
	\cap 
	\tau(1_V(\gamma_{r_1}),\dots,1_V(\gamma_{r_n})) = \emptyset.
\end{equation}
Set $I:=\tau(1_V(\gamma_{k_1}),\dots,1_V(\gamma_{k_n})) \subset \erre$. Observe that
for each $\delta \in A=\prod_{i=1}^n V_{k_i}$ and each $i\in\{1,\dots,n\}$, we have 
$\delta_i \in V$ if and only if $V_{k_i} \subset V$, which is equivalent to saying that 
$\gamma_{k_i}\in V$. In particular,
$$ 
	A  \subset \{\delta\in\Gamma^n : \, \tau(1_V(\delta_1),\dots,1_V(\delta_n)) \subset I\}. 
$$
In the same way, bearing in mind~\eqref{equation:intersection} we have
$$ 
	B  \subset \{\delta\in\Gamma^n : \, \tau(1_V(\delta_1),\dots,1_V(\delta_n))\cap I=\emptyset\}. 
$$
Now, property~(S) of~$\Phi$ implies that $\Phi(A)$ and $\Phi(B)$
are $\Sigma$-separated.
\end{proof}

Throughout the rest of the subsection we assume that
$|\Gamma|\leq \mathfrak{c}$, which is weaker than being a Kunen
cardinal (Lemma~\ref{lem:MoreNoKunen}(ii)). We can suppose without loss of generality
that $\Gamma \subset \erre$, so that $\Gamma^n$ is equipped with
the topology inherited from~$\erre^n$.

\begin{lem}\label{diagonalsets}
Let $A,B\subset \Gamma^n$ be open sets. If 
$A$ and $B$ are $\tau$-separated, then $\Phi(A)$ and $\Phi(B)$ are $\Sigma$-separated.
\end{lem}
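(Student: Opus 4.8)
The plan is to reduce the statement about open sets $A,B\subset\Gamma^n$ to the already established case of products (Lemma~\ref{productseparation}), using the fact that $\Gamma\subset\erre$ is a Kunen cardinal and that open subsets of $\Gamma^n$ can be approximated from inside by countable unions of boxes. First I would note that since $\Gamma^n$ carries the topology inherited from $\erre^n$ and has a countable base of open boxes with rational-endpoint sides, each open $A\subset\Gamma^n$ can be written as a countable union $A=\bigcup_k P_k$ of \emph{products} $P_k=\prod_i(\Gamma\cap J_i^k)$, where the $J_i^k$ are rational intervals; similarly $B=\bigcup_l Q_l$. By Remark~\ref{pieceseparation} it then suffices to show that each pair $\Phi(P_k)$, $\Phi(Q_l)$ is $\Sigma$-separated.

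The key point is that $\tau$-separation of $A$ and $B$ descends to the pieces: since $P_k\subset A$ and $Q_l\subset B$, and $A,B$ are $\tau$-separated, there are no $\gamma\in P_k$, $\delta\in Q_l$ that are $\tau$-proximal, so $P_k$ and $Q_l$ are themselves $\tau$-separated. Now each $P_k$ and $Q_l$ is a product in the sense of the paper's definition (its factors are the sets $\Gamma\cap J_i^k$), so I can apply Lemma~\ref{productseparation} directly to conclude that $\Phi(P_k)$ and $\Phi(Q_l)$ are $\Sigma$-separated. Feeding this back through Remark~\ref{pieceseparation} (applied with the doubly-indexed families reindexed over $\Nat$) yields that $\Phi(A)=\bigcup_k\Phi(P_k)$ and $\Phi(B)=\bigcup_l\Phi(Q_l)$ are $\Sigma$-separated, which is the desired conclusion.

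The main obstacle — really the only nonroutine point — is verifying that an arbitrary open subset of $\Gamma^n$ is a \emph{countable} union of products of the required form. This rests on second countability of $\erre^n$: the boxes $\prod_i J_i$ with $J_i$ having rational endpoints form a countable base, and intersecting with $\Gamma^n$ gives products whose factors are the traces $\Gamma\cap J_i$. One should check that these traces are legitimate factors (arbitrary subsets of $\Gamma$ are allowed, so there is no difficulty) and that the resulting products are exactly the relatively open boxes, so that every relatively open set is their countable union. Once this topological bookkeeping is in place, the proof is a clean two-step reduction: countable decomposition into products, then Lemma~\ref{productseparation} on each pair, with Remark~\ref{pieceseparation} handling the passage back to the unions.
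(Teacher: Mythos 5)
Your proof is correct and follows essentially the same route as the paper: write each open set as a countable union of products (via the countable base of rational boxes in $\erre^n$), apply Lemma~\ref{productseparation} to each pair of pieces, and glue the unions back together with Remark~\ref{pieceseparation}. One minor framing point: this lemma needs only the standing assumption $|\Gamma|\le\mathfrak{c}$ (so that $\Gamma$ may be taken inside $\erre$), not the full Kunen hypothesis you invoke at the outset.
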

\begin{proof} Let $\mathcal{O}_{A},\mathcal{O}_B \subset \erre^n$ be open sets such that
$A=\Gamma^n \cap \mathcal{O}_{A}$ and $B=\Gamma^n \cap \mathcal{O}_{B}$. Both $\mathcal{O}_{A},\mathcal{O}_B$ are countable unions of (open) 
products in~$\erre^n$ and, therefore, we can write
$A=\bigcup_{m\in \Nat}A_m$ and $B=\bigcup_{m\in \Nat}B_m$, where $A_m$ and $B_m$ 
are products in~$\Gamma^n$. For each $k,m\in \Nat$ 
the sets $A_k$ and $B_m$ are $\tau$-separated and Lemma~\ref{productseparation} ensures 
that $\Phi(A_k)$ and $\Phi(B_m)$ are $\Sigma$-separated.
Hence the sets $\Phi(A)=\bigcup_{m\in \Nat}\Phi(A_m)$ and $\Phi(B)=\bigcup_{m\in \Nat}\Phi(B_m)$
are $\Sigma$-separated (by Remark~\ref{pieceseparation}), as required.
\end{proof}

\begin{remark}\label{remark:AlgebraProducts}
The algebra on~$\Gamma^n$ generated by products is exactly the collection
of all subsets of~$\Gamma^n$ which can be written as a disjoint union of finitely many products.
\end{remark}
\begin{proof} Let us write $\mathcal{A}$ to denote such collection.
In order to prove that $\mathcal{A}$ is an algebra, observe first that
$\mathcal{A}$ is closed under finite intersections. On the other hand,
given any product $W=\prod_{i=1}^n W_i$, then $\Gamma^n \setminus W$ is the disjoint union
of all products of the form $\prod_{i=1}^n C_i$, where each $C_i$ is an atom of the algebra on~$\Gamma$
generated by $W_1,\dots,W_n$ and at least one $C_i$ is disjoint from~$W_i$. So, 
 $\Gamma^n \setminus W \in \mathcal{A}$. It follows that $\mathcal{A}$ is also closed under complements.
\end{proof}

\begin{lem}\label{lem:ParticularCase}
Let $A,B \subset \Gamma^n$ be such that for each $E\in Prox(\tau)$ there is $W_E\subset \Gamma^{\ell_E}$ in the algebra generated by products 
such that $L^0_E(A)\subset W_E$ and $L^1_E(B)\cap W_E = \emptyset$.
Then $\Phi(A)$ and $\Phi(B)$ are $\Sigma$-separated.
\end{lem}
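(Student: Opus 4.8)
The plan is to prove the statement in two stages. In the first stage I reduce, by a monotone-class induction over the finite set $Prox(\tau)$, to the case in which every set $W_E$ is an actual product; in the second stage I treat that product case directly, decomposing $\Gamma^n$ into finitely many pieces on which the argument of Lemma~\ref{productseparation} can be carried out.

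For the first stage, enumerate $Prox(\tau)=\{E_1,\dots,E_N\}$ (this is finite, its elements being equivalence relations on $\{1,\dots,n\}\times\{0,1\}$) and, for $0\le k\le N$, let $(P_k)$ be the statement obtained from the lemma by requiring $W_{E_j}$ to lie in the algebra generated by products only for $j\le k$, while for $j>k$ the set $W_{E_j}$ is merely required to be a product. Then $(P_N)$ is the lemma and $(P_0)$ is the product case, so it suffices to prove $(P_{k-1})\Rightarrow(P_k)$. Given $A,B$ as in $(P_k)$, I would apply Lemma~\ref{monotoneclassE} with $E_0:=E_k$ and with the choices $X_E:=W_E$, $Y_E:=\Gamma^{\ell_E}\setminus W_E$ for $E\neq E_k$ (so $X_E,Y_E$ are disjoint and the hypotheses $L^0_E(A)\subset X_E$, $L^1_E(B)\subset Y_E$ hold). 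The family $\mathfrak V$ produced by that lemma is closed under countable, hence finite, unions. Moreover every \emph{product} $W\subset\Gamma^{\ell_{E_k}}$ belongs to $\mathfrak V$: any $A',B'$ witnessing the definition of $\mathfrak V$ for such a $W$ have separators $X_E=W_E$ for $E\neq E_k$ and $W$ for $E_k$, which are algebra separators for $E_1,\dots,E_{k-1}$ and product separators for $E_k,\dots,E_N$, i.e.\ exactly the hypothesis of $(P_{k-1})$; hence $\Phi(A')$ and $\Phi(B')$ are $\Sigma$-separated. Since $\mathfrak V$ contains all products and is closed under finite unions, Remark~\ref{remark:AlgebraProducts} shows it contains the whole algebra generated by products, so $W_{E_k}\in\mathfrak V$; the defining property of $\mathfrak V$, applied to the original $A,B$, then gives the conclusion of $(P_k)$.

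For the base case $(P_0)$, write each product as $W_E=\prod_{c\in\ell_E}W_{E,c}$ and let $V_1,\dots,V_m$ be the atoms of the finite algebra on $\Gamma$ generated by all the factors $W_{E,c}$. Partition $\Gamma^n$ into the finitely many pieces $Q_{\pi,a}$ obtained by fixing the pattern $\pi$ of coordinate coincidences of a point together with an assignment $a$ of an atom to each $\pi$-class. By Remark~\ref{pieceseparation} it is enough to separate $\Phi(Q_{\pi,a})$ from $\Phi(Q_{\pi',a'})$ for each pair of pieces meeting $A$ and $B$ respectively. The key point is that on $Q_{\pi,a}$ the coincidence pattern is constant, so the condition defining membership in $L^0_E$ (resp.\ $L^1_E$) --- namely $\gamma_p=\gamma_q$ whenever $(p,0)E(q,0)$ (resp.\ $(p,1)E(q,1)$) --- holds for all points of the piece or for none; together with the all-or-nothing property of atoms (each $V_j$ is contained in or disjoint from each $W_{E,c}$), this upgrades $L^0_E(A)\subset W_E$ and $L^1_E(B)\cap W_E=\emptyset$ to $L^0_E(Q_{\pi,a})\subset W_E$ and $L^1_E(Q_{\pi',a'})\cap W_E=\emptyset$ for every $E$. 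By Lemma~\ref{lem:tauseparation} the two pieces are then $\tau$-separated. Finally, since every point of a piece realizes the same atom type, I would choose representatives in the two nonempty pieces, which are not $\tau$-proximal, and mimic the proof of Lemma~\ref{productseparation}: the separating set $V$ is a union of atoms, so the tuple $(1_V(\gamma_1),\dots,1_V(\gamma_n))$ is the same for all $\gamma$ in a given piece, and property~(S) delivers the required $\Sigma$-separation.

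I expect the base case to be the main obstacle. The difficulty is that the pieces $Q_{\pi,a}$ are not products but ``generalized diagonals'' (products cut down by equalities and inequalities among coordinates), so Lemma~\ref{productseparation} cannot be invoked as a black box. Refining the atom decomposition by the coincidence pattern $\pi$ is precisely what makes $E$-compatibility constant on each piece, and this is exactly what promotes the easy $\tau$-separation of $A\cap Q_{\pi,a}$ and $B\cap Q_{\pi',a'}$ to $\tau$-separation of the \emph{full} pieces $Q_{\pi,a}$ and $Q_{\pi',a'}$; once that is in place, constancy of the atom type lets the argument of Lemma~\ref{productseparation} run with no change. The degenerate relations with $\ell_E=\emptyset$ cause no trouble, since for them the requirement $W_E\subset\Gamma^{\ell_E}$ already forbids the only configuration that could violate $\tau$-separation.
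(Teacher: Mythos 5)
Your first stage is sound: the induction over $Prox(\tau)$ via Lemma~\ref{monotoneclassE} and Remark~\ref{remark:AlgebraProducts} is essentially the same mechanism the paper itself uses later (in the proof of Lemma~\ref{Mainlemma}), and it correctly reduces the statement to the case where every $W_E$ is a product. In the base case, the partition into pieces $Q_{\pi,a}$, the upgrade of the hypotheses from $A,B$ to the full pieces, and the conclusion via Lemma~\ref{lem:tauseparation} that any piece meeting $A$ is $\tau$-separated from any piece meeting $B$ are also correct. The gap is in your very last step: the claim that the non-proximality witness for a pair of representatives can be taken to be (or converted into) a union of atoms $V$, after which one application of property~(S) separates the two pieces. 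When a single atom contains several \emph{distinct} coordinates of a piece --- exactly the ``generalized diagonal'' situation you yourself flag --- no union of atoms can assign those coordinates different indicators, and in fact no single application of (S) need suffice at all.

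Concretely, take $n=2$ and the type $\tau(0,0)=\tau(1,1)=\{0\}$, $\tau(0,1)=\tau(1,0)=\{1\}$; here $Prox(\tau)$ consists of the four partitions of $\{1,2\}\times\{0,1\}$ into classes of even size. Pick distinct $a_1,a_2,a_3,b_0\in\Gamma$, let $A=\{(a_1,a_2)\}$, $B=\{(b_0,b_0)\}$, and take $W_E=\{a_1,a_2,a_3\}^2$ for the two relations having two linking classes, and $W_E=\emptyset$ for the other two (these choices lie in the relevant algebras and satisfy $L^0_E(A)\subset W_E$, $L^1_E(B)\cap W_E=\emptyset$). The only nontrivial atom is $V_1=\{a_1,a_2,a_3\}$; the piece meeting $A$ is the off-diagonal $Q=\{(x,y)\in V_1^2:\,x\neq y\}$ and the piece meeting $B$ is $Q'=\{(x,x):\,x\notin V_1\}$. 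Now for \emph{every} $V\subset\Gamma$ (union of atoms or not), two of the three points $a_1,a_2,a_3$ get the same $V$-indicator, so the values $\tau(1_V(\gamma_1),1_V(\gamma_2))$ with $\gamma\in Q$ always include $\{0\}$; but every value coming from $Q'$ equals $\{0\}$ as well, so no closed $I$ can contain all the former values while missing all the latter (in either orientation). Hence $\Phi(Q)$ and $\Phi(Q')$ cannot be separated by one invocation of (S): the anti-diagonal constraints inside a piece force a further, \emph{countable} decomposition of the piece into genuine products before Lemma~\ref{productseparation}-type reasoning applies. This is precisely where the paper uses $|\Gamma|\leq\mathfrak{c}$: it embeds $\Gamma$ into $\erre$ with the atoms placed in pairwise disjoint open intervals, enlarges $A$ and $B$ to $\tau$-separated \emph{open} subsets of $\Gamma^n$, and then invokes Lemma~\ref{diagonalsets}, whose countable rectangle decomposition combined with Remark~\ref{pieceseparation} yields the separation. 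The fact that your base case never uses $\Gamma\subset\erre$ (equivalently, never needs more than finitely many applications of (S)) is the tell-tale sign of the missing ingredient.
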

\begin{proof} We divide the proof into several steps.

\smallskip
{\sc Step 1.} For each $E\in Prox(\tau)$, the set 
$W_E$ (resp. $\Gamma^{\ell_E} \setminus W_E$) is the union of a finite collection $\mathfrak{P}_E$ 
(resp.~$\mathfrak{Q}_E$) of 
products in~$\Gamma^{\ell_E}$ (Remark~\ref{remark:AlgebraProducts}). Observe also that $Prox(\tau)$ is finite.
Let $C_1,\dots,C_m$ be the atoms of the algebra on~$\Gamma$ generated by
the factors of all elements of the collection $\bigcup\{\mathfrak{P}_E \cup \mathfrak{Q}_E:E\in Prox(\tau)\}$. Then 
each $W_E$ (resp. $\Gamma^{\ell_E} \setminus W_E$) is a finite union of products with
factors in~$\{C_1,\dots,C_m\}$.

We can suppose without loss of generality that
$C_k \subset I_k:=(2k,2k+1) \subset \erre$ for all $k\in\{1,\dots,m\}$. Thus, if $A\subset \Gamma^n$
is any product with factors in~$\{C_1,\dots,C_m\}\cup\{\Gamma\}$, then 
 $A$ is open in~$\Gamma^n$, because it can be written as 
$A=\Gamma^n \cap P$ for some product $P\subset \erre^n$ with factors
in $\{I_1,\dots,I_m\}\cup\{\erre\}$.

\smallskip
{\sc Step 2.} Fix $E\in Prox(\tau)$. For $i\in\{0,1\}$, consider the equivalence relation $\approx_E^i$ on $\{1,\dots,n\}$ given by
$$
	p \approx_E^i q \ \Leftrightarrow \ (p,i)E(q,i).
$$
Set 
$$
	D_{\approx_E^i} := \{\gamma \in \Gamma^n : \, p\approx_E^i q \Rightarrow \gamma_p = \gamma_q\}
$$
and define $\varphi_E^i:D_{\approx_E^i} \to \Gamma^{\ell_E}$ by  
$$
	\varphi_E^i(\gamma)_{[(k,i)]}:=\gamma_k, \quad [(k,i)]\in \ell_E, \quad \gamma \in D_{\approx_E^i}.
$$

Let $R\subset \Gamma^{\ell_E}$ be any product with factors in
$\{C_1,\dots,C_m\}$. It is easy to check that
there is some product $A \subset \Gamma^n$ with factors in
$\{C_1,\dots,C_m\}\cup\{\Gamma\}$ (in particular, $A$~is open in~$\Gamma^n$) such that
$(\varphi_E^i)^{-1}(R)=D_{\approx_E^i}\cap A$, hence
$$
	(\varphi_E^i)^{-1}(R)\cup \Gamma^n \setminus D_{\approx_E^i}=
	A\cup \Gamma^n \setminus D_{\approx_E^i}.
$$
Since 
$$
	\Gamma^n \setminus D_{\approx_E^i}=
	\Gamma^n 
	\cap 
	\bigcup_{p \approx_E^i q}
	\{\gamma\in \erre^n: \, \gamma_p\neq \gamma_q\},
$$
we conclude that $(\varphi_E^i)^{-1}(R)\cup \Gamma^n \setminus D_{\approx_E^i}$ is
open in~$\Gamma^n$. 

It follows that the sets
$$
	\tilde{A}_E:=(\varphi_E^0)^{-1}(W_E)\cup \Gamma^n \setminus D_{\approx_E^0}
$$
$$
	\tilde{B}_E:=(\varphi_E^1)^{-1}(\Gamma^{\ell_E}\setminus W_E)\cup \Gamma^n \setminus D_{\approx_E^1}
$$
are open in~$\Gamma^n$. Moreover, since
$$
	L^0_E(S)=\varphi_E^0\bigl(S\cap D_{\approx_E^0}\bigr)
	\quad
	\mbox{and}
	\quad
	L^1_E(S)=\varphi_E^1\bigl(S\cap D_{\approx_E^1}\bigr)
	\quad
	\mbox{for every }S\subset \Gamma^n,
$$
we have:
\begin{itemize}
\item $\varphi_E^0(\gamma)\in L^0_E(A) \subset W_E$ for every $\gamma \in A \cap D_{\approx_E^0}$, hence $A \subset \tilde{A}_E$; 
\item $L_E^0(\tilde{A}_E)= \varphi_E^0(\tilde{A}_E\cap D_{\approx_E^0}) \subset W_E$;
\item $\varphi_E^1(\gamma)\in L^1_E(B) \subset \Gamma^{\ell_E}\setminus W_E$ for every $\gamma \in B \cap D_{\approx_E^1}$, hence
$B \subset \tilde{B}_E$;
\item $L_E^1(\tilde{B}_E)= \varphi_E^1(\tilde{B}_E\cap D_{\approx_E^1}) \subset \Gamma^{\ell_E}\setminus W_E$.
\end{itemize}

\smallskip
{\sc Step~3.} Now let 
$$
	\tilde{A} := \bigcap_{E\in Prox(\tau)} \tilde{A}_E
	\qquad
	\mbox{and}
	\qquad
	\tilde{B} := \bigcap_{E\in Prox(\tau)} \tilde{B}_E.
$$ 
For each $E\in Prox(\tau)$ we have 
$$
	L^0_E(\tilde{A})\cap L_E^1(\tilde{B}) \subset L_E^0(\tilde{A}_E) \cap  L_E^1(\tilde{B}_E)
	\subset W_E \cap (\Gamma^{\ell_E}\setminus W_E)=\emptyset,
$$
hence Lemma~\ref{lem:tauseparation} ensures that $\tilde{A}$ and $\tilde{B}$ are $\tau$-separated.
Since $\tilde{A}$ and $\tilde{B}$ are open in~$\Gamma^n$
(bear in mind that $Prox(\tau)$ is finite), an appeal 
to Lemma~\ref{diagonalsets} allows us to deduce
that $\Phi(\tilde{A})$ and $\Phi(\tilde{B})$ are $\Sigma$-separated. 
But $A\subset \tilde{A}$ and $B\subset\tilde{B}$, so the sets 
$\Phi(A)$ and $\Phi(B)$ are $\Sigma$-separated as well.
This finishes the proof.
\end{proof}

\begin{proof}[Proof of Lemma~\ref{Mainlemma}]
In view of Lemma~\ref{lem:tauseparation}, it suffices to prove that,
for any set $\mathcal{R}\subset Prox(\tau)$, the following statement holds:

\smallskip
\noindent $\langle\mathcal{R}\rangle$ {\em If $A,B \subset \Gamma^n$ satisfy:
\begin{itemize}
\item[(i)] $L^0_E(A)\cap L^1_E(B)=\emptyset$ for every $E\in \mathcal{R}$,
\item[(ii)] for each $E\in Prox(\tau) \setminus \mathcal{R}$ there is $W_E\subset \Gamma^{\ell_E}$ in the algebra generated by products 
such that $L^0_E(A)\subset W_E$ and $L^1_E(B)\cap W_E = \emptyset$,
\end{itemize}
then $\Phi(A)$ and $\Phi(B)$ are $\Sigma$-separated.}

We proceed by induction on~$|\mathcal{R}|$. The case $|\mathcal{R}|=0$ (i.e. $R=\emptyset$) has been proved in Lemma~\ref{lem:ParticularCase}.
So assume that $|\mathcal{R}|\geq 1$ and that $\langle \mathcal{R}' \rangle$ holds true for every 
subset of $Prox(\tau)$ with cardinality less than~$|\mathcal{R}|$. 
Take $A,B \subset \Gamma^n$ satisfying conditions (i) and~(ii) above. We will check that
$\Phi(A)$ and $\Phi(B)$ are $\Sigma$-separated.

Fix $E_0\in \mathcal{R}$
and set $\mathcal{R}':=\mathcal{R}\setminus \{E_0\}$. For each $E\in Prox(\tau) \setminus \{E_0\}$, fix
disjoint sets $X_E,Y_E \subset \Gamma^{\ell_E}$ as follows:
\begin{itemize}
\item $X_E:=L^0_E(A)$ and $Y_E:=L^1_E(B)$ for $E\in \mathcal{R}$,
\item $X_E:=W_E$ and $Y_E:=\Gamma^{\ell_E}\setminus W_E$ for $E\in Prox(\tau) \setminus \mathcal{R}$. 
\end{itemize} 

Let $\mathfrak{V}$ be as in Lemma~\ref{monotoneclassE}. {\em We claim that
every $W \subset \Gamma^{\ell_{E_0}}$ in the algebra generated by products
belongs to~$\mathfrak{V}$.} Indeed, let $A',B'\subset \Gamma^n$ be sets satisfying
$L^0_E(A')\subset X_E$ and $L^1_E(B')\subset Y_E$ for every $E\in Prox(\tau)\setminus \{E_0\}$,
$L^0_{E_0}(A')\subset W$ and $L^1_{E_0}(B')\cap W = \emptyset$. Then:
\begin{itemize}
\item $L^0_E(A')\cap L^1_E(B') \subset X_E \cap Y_E =\emptyset$ for every $E\in \mathcal{R}'$,
\item for each $E\in Prox(\tau) \setminus \mathcal{R}'$ 
there is $W'_E\subset \Gamma^{\ell_E}$ in the algebra generated by products 
such that $L^0_E(A')\subset W'_E$ and $L^1_E(B')\cap W'_E = \emptyset$ 
(take $W'_{E_0}:=W$ and $W'_E:=W_E$ for $E\neq E_0$). 
\end{itemize}
Since $\langle \mathcal{R}' \rangle$ holds, the sets 
$\Phi(A')$ and $\Phi(B')$ are $\Sigma$-separated. Therefore, $W\in \mathfrak{V}$.

Thus, $\mathfrak{V}$ contains the algebra on~$\Gamma^{\ell_{E_0}}$
generated by products. Since $\mathfrak{V}$ is a monotone class
(by Lemma~\ref{monotoneclassE}), from the Monotone Class Theorem it follows that the {\em $\sigma$-algebra}
on~$\Gamma^{\ell_{E_0}}$ generated by products is contained in~$\mathfrak{V}$. Now, the fact that
$|\Gamma|$ is a Kunen cardinal implies that $\mathfrak{V}=\mathcal{P}(\Gamma^{\ell_{E_0}})$.

In particular, the set $W:=L^0_{E_0}(A)$ belongs to~$\mathfrak{V}$.  
Since $L^0_E(A)\subset X_E$ and $L^1_E(B)\subset Y_E$ for every $E\in Prox(\tau)\setminus \{E_0\}$,
$L^0_{E_0}(A)\subset W$ and $L^1_{E_0}(B)\cap W = \emptyset$, we conclude that
$\Phi(A)$ and $\Phi(B)$ are $\Sigma$-separated. This proves that
$\langle\mathcal{R}\rangle$ holds and the proof 
of Lemma~\ref{Mainlemma} is over.
\end{proof}

\section{The case of $C(2^{\omega_1})$}\label{section:omega1}

The aim of this section is to give a different, more direct proof of the equality $\Ba(C_p(2^{\omega_1}))=\Bo(C(2^{\omega_1}))$, 
see Theorem~\ref{2omega1} below.

We denote by $\mathfrak G$ the family of all open intervals of~$\mathbb{R}$ with rational endpoints and we write
$\JJ:=\bigcup_{n\in \Nat} \mathfrak{G}^n$. Given a compact space~$K$, $n\in \Nat$, $A\subset K^n$ and $J=(J_1,\dots,J_n) \in \mathfrak{G}^n$, we define
\begin{multline*}
	u(A,J):=\{g\in C(K): \, \mbox{there is } (x_1,\dots,x_n)\in A \\
	\mbox{such that } g(x_k)\in J_k \mbox{ for all }k=1,\dots,n\}.
\end{multline*}

\begin{remark}\label{remark:Closure}
In the previous conditions, we have $u(A,J)=u(\ol{A},J)$.
\end{remark}
\begin{proof}
For any $g\in C(K)$, the set $U:=\prod_{k=1}^n g^{-1}(J_k)\subset K^n$ is open, and therefore
$U\cap \ol{A} \neq \emptyset$ if and only if $U\cap A\neq \emptyset$.
\end{proof}

In Corollary~\ref{1:2} we shall isolate a property of a compact space~$K$ guaranteeing that $\Ba(C_p(K))=\Bo(C_p(K))$.
To this end we need a couple of lemmas.

\begin{lemma}\label{1:1}
Let $K$ be a compact space such that $u(F,J)\in \Ba(C_p(K))$ for every  
closed set $F \subset K^n$, every $J\in\mathfrak{G}^n$ and every $n\in \Nat$. 
Then $\Ba(C_p(K))=\Bo(C_p(K))$.
\end{lemma}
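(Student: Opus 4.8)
The inclusion $\Ba(C_p(K))\subset \Bo(C_p(K))$ is automatic, so the plan is to prove that every open subset $\Theta$ of~$C_p(K)$ belongs to $\Ba(C_p(K))$; since $\Ba(C_p(K))$ is a $\sigma$-algebra, this gives $\Bo(C_p(K))\subset \Ba(C_p(K))$ and hence equality. The whole difficulty is that $C_p(K)$ need not be second countable, so $\Theta$ is a priori only an \emph{uncountable} union of basic open sets; the point is to reorganize this union into a countable one indexed by the countable family~$\JJ$.

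First I would record that the sets $u(\{x\},J)$, with $x=(x_1,\dots,x_n)\in K^n$ and $J=(J_1,\dots,J_n)\in\mathfrak{G}^n$, form a base of~$C_p(K)$: a subbasic neighborhood of $g$ has the form $\{h:h(x)\in V\}$ with $V\subset\erre$ open, and choosing $J\in\mathfrak{G}$ with $g(x)\in J\subset V$ shrinks it to a set of the required form. Consequently, for every $g\in\Theta$ there are $n\in\Nat$, $x\in K^n$ and $J\in\mathfrak{G}^n$ with $g\in u(\{x\},J)\subset\Theta$.

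Next, for each $J\in\mathfrak{G}^n$ I would introduce the set of ``good witnesses''
$$
	F_J:=\{x\in K^n : \, u(\{x\},J)\subset\Theta\},
$$
and claim that
$$
	\Theta=\bigcup_{J\in\JJ} u(F_J,J).
$$
The inclusion ``$\subset$'' follows from the previous paragraph (any $g\in\Theta$ lies in some $u(\{x\},J)\subset\Theta$, whence $x\in F_J$ and $g\in u(F_J,J)$), while ``$\supset$'' holds because $u(F_J,J)=\bigcup_{x\in F_J}u(\{x\},J)\subset\Theta$ by the very definition of~$F_J$.

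The key step is the last one: although $F_J$ need not be closed, Remark~\ref{remark:Closure} gives $u(F_J,J)=u(\ol{F_J},J)$, and $\ol{F_J}$ \emph{is} a closed subset of~$K^n$, so the hypothesis yields $u(F_J,J)=u(\ol{F_J},J)\in\Ba(C_p(K))$. Since $\JJ=\bigcup_{n\in\Nat}\mathfrak{G}^n$ is countable, $\Theta$ is a countable union of members of $\Ba(C_p(K))$ and therefore belongs to it, completing the argument. The only genuinely delicate point is the passage from the uncountable union of basic open sets to the countable union over~$\JJ$, which is exactly what the combination of the countability of~$\mathfrak{G}$ and Remark~\ref{remark:Closure} (replacing $F_J$ by its closure without changing $u$) is designed to achieve.
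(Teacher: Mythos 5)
Your proof is correct and follows essentially the same route as the paper: your set $F_J=\{x\in K^n: u(\{x\},J)\subset\Theta\}$ coincides with the paper's $A_J=\bigcup\{A\subset K^n: u(A,J)\subset\Theta\}$ (since $u(A,J)=\bigcup_{x\in A}u(\{x\},J)$), and both arguments then decompose $\Theta$ as the countable union $\bigcup_{J\in\JJ}u(F_J,J)$ and invoke Remark~\ref{remark:Closure} to pass to $\ol{F_J}$ so the hypothesis applies.
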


\begin{proof}
Let $G\subset C(K)$ be open for the pointwise convergence topology. 
For $n\in \Nat$ and $J=(J_1,\ldots,J_n)\in\mathfrak{G}^n$, set
$A_J:=\bigcup\{A \subset K^n: u(A,J)\subset G\}$, so that 
$u(A_J,J) \subset G$. We claim that
\begin{equation}\label{equation:us}
	G=\bigcup_{J \in \JJ} u(A_J,J).
\end{equation}
Indeed, given any $g\in G$, we can find $\{t_1,\ldots, t_n\}\subset K$ and $J=(J_1,\ldots,J_n)\in\mathfrak{G}^n$ such that 
$$
	g\in H:=\{h\in C(K): \, h(t_k)\in J_k \mbox{ for all }k=1,\dots,n\}\subset G.
$$
Since $u(\{(t_1,\dots,t_n)\},J)=H \subset G$, we have $(t_1,\dots,t_n)\in A_J$ and
so $g\in u(A_J,J)$. This proves equality~\eqref{equation:us}.
Now, in view of Remark~\ref{remark:Closure}, we get
$$
	G=\bigcup_{J \in \JJ} u(\overline{A_J},J).
$$
Since $\JJ$ is countable and each $u(\overline{A_J},J)$ belongs to~$\Ba(C_p(K))$ (by the assumption), 
it follows that $G\in \Ba(C_p(K))$. Hence $\Ba(C_p(K))=\Bo(C_p(K))$.
\end{proof}

\begin{lem}\label{lem:1:medio}
Let $K$ be a compact space, $n\in \Nat$, $J\in \mathfrak{G}^n$ and $(F_p)_{p\in \Nat}$ 
a decreasing sequence  of closed separable subsets of~$K^n$. Then 
$u(\bigcap_{p\in \Nat} F_p,J)\in \Ba(C_p(K))$.
\end{lem}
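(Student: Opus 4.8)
The plan is to reduce the statement to the (easy) case of a single closed separable set and then to use a compactness argument to pass to the intersection $F:=\bigcap_{p\in\Nat}F_p$.

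First I would record that for every \emph{closed separable} set $G\subset K^n$ and every $J'\in\mathfrak{G}^n$ one has $u(G,J')\in\Ba(C_p(K))$. Indeed, pick a countable dense set $D\subset G$. Then $u(D,J')=\bigcup_{t\in D}u(\{t\},J')$, and for $t=(t_1,\dots,t_n)$ the set $u(\{t\},J')=\bigcap_{k=1}^n \delta_{t_k}^{-1}(J'_k)$ is a finite intersection of sets belonging to $\Ba(C_p(K))$ (preimages of open intervals under the evaluations $\delta_{t_k}$), hence $u(D,J')\in\Ba(C_p(K))$ as a countable union. Since $\ol{D}=G$ ($G$ being closed in~$K^n$), Remark~\ref{remark:Closure} gives $u(G,J')=u(\ol{D},J')=u(D,J')\in\Ba(C_p(K))$.

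The crux is that $F=\bigcap_p F_p$ itself need not be separable, so the previous step does not apply to $F$ directly. Instead I would prove the identity
\[
  u(F,J)=\bigcup\Bigl\{\textstyle\bigcap_{p\in\Nat}u(F_p,J'):\ J'\in\mathfrak{G}^n,\ \ol{J'_k}\subset J_k\text{ for all }k\Bigr\}.
\]
The inclusion $\subset$ is the soft one: if $g\in u(F,J)$, there is $x\in F$ with $g(x_k)\in J_k$ for all $k$, and since each $J_k$ is open we may choose $J'_k\in\mathfrak{G}$ with $g(x_k)\in J'_k$ and $\ol{J'_k}\subset J_k$; then $g\in u(F,J')\subset\bigcap_p u(F_p,J')$ by monotonicity of $u$ in its first argument (as $F\subset F_p$). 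For the inclusion $\supset$, fix $J'$ with $\ol{J'_k}\subset J_k$ and suppose $g\in\bigcap_p u(F_p,J')$. Writing $U'_g:=\prod_{k=1}^n g^{-1}(J'_k)$, the hypothesis says $F_p\cap U'_g\neq\emptyset$ for all $p$; hence the closed sets $F_p\cap\ol{U'_g}$ form a decreasing sequence of nonempty subsets of the compact space $K^n$, so $F\cap\ol{U'_g}\neq\emptyset$ by compactness. Finally, continuity of $g$ together with $\ol{J'_k}\subset J_k$ yields $\ol{U'_g}\subset\prod_k g^{-1}(\ol{J'_k})\subset\prod_k g^{-1}(J_k)$, so $F$ meets the open set $\prod_k g^{-1}(J_k)$ and therefore $g\in u(F,J)$.

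Granting the identity, the right-hand side is a \emph{countable} union (as $\mathfrak{G}^n$ is countable) of countable intersections of sets $u(F_p,J')$, each of which lies in $\Ba(C_p(K))$ by the first step; hence $u(F,J)\in\Ba(C_p(K))$, as required. The main obstacle is precisely the $\supset$ inclusion: the compactness argument only produces a point of $F$ in the \emph{closure} $\ol{U'_g}$, and the role of the strict containment $\ol{J'_k}\subset J_k$ is exactly to guarantee that this closure is still swallowed by the open box $\prod_k g^{-1}(J_k)$, so that the limit point actually witnesses $g\in u(F,J)$.
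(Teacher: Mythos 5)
Your proof is correct and takes essentially the same approach as the paper: the same first step for closed separable sets (countable dense subset plus Remark~\ref{remark:Closure}), and the same key identity expressing $u(F,J)$ as a countable union, over inner rational boxes $J'$ with $\ol{J'_k}\subset J_k$, of the intersections $\bigcap_{p}u(F_p,J')$ --- the paper merely indexes this union by a nested sequence $J^m$ exhausting $J$ rather than by all such $J'$ at once. Your compactness step (finite intersection property applied to the sets $F_p\cap\ol{U'_g}$) is the paper's cluster-point argument in slightly different clothing.
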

\begin{proof} We divide the proof into two steps.

\smallskip
{\sc Step~1.} $u(S,J)\in \Ba(C_p(K))$ for every closed separable set~$S \subset K^n$.
Indeed, take $D\subset S$ countable with $\overline{D}=S$. By Remark~\ref{remark:Closure}, we have
$$
	u(S,J)=u(D,J)=\bigcup_{x\in D}u(\{x\},J).
$$ 
Since each $u(\{x\},J)$ belongs to~$\Ba(C_p(K))$, the same holds for $u(S,J)$.

\smallskip
{\sc Step~2.} Write $J=(J_1,\dots,J_n)$ and set $F:=\bigcap_{p\in \Nat}F_p$.
For each $m\in \Nat$, choose 
$$
	J^m=(J^m_1,\ldots, J^m_n)\in\mathfrak{G}^n
$$ 
such that $\ol{J^m_k}\subset J^{m+1}_k$ and
$\bigcup_{m\in \Nat} J^m_k=J_k$ for every $m\in \Nat$ and $k\in\{1,\dots, n\}$. 
According to Step~1, in order to prove that $u(F,J)\in \Ba(C_p(K))$ it suffices
to check that
\begin{equation}\label{equation:G}
	u(F,J)=\bigcup_{m \in \Nat}\bigcap_{p\in \Nat} u(F_p,J^m).
\end{equation}
To this end, observe first that if $g\in u(F,J)$ then there is $(x_1,\dots,x_n)\in F$ such that
$g(x_k)\in J_k$ for all~$k$. Since $J_k=\bigcup_{m\in \Nat} J^m_k$ and
$J^m_k\subset J^{m+1}_k$, we can find $m\in \Nat$ large enough such that
$g(x_k)\in J^m_k$ for all~$k$, hence $g\in u(F,J^m) \subset \bigcap_{p\in \Nat}u(F_p,J^m)$. 

To check ``$\supset$'' in~\eqref{equation:G}, fix $g\in \bigcup_{m \in \Nat}\bigcap_{p\in \Nat} u(F_p,J^m)$.
Then there exists $m\in \Nat$ such that, for each $p\in \Nat$, there is some
$x^p=(x^p_1,\ldots, x^p_n)\in F_p$ with the property
that $g(x^p_k)\in J^m_k$ for all~$k$. Let $x\in K^n$ be
any cluster point of the sequence $(x^p)_{p\in \Nat}$. Then 
$x\in F$ and $g(x_k)\in \ol{J^m_k}\subset J_k$ for all~$k$, witnessing that
$g\in u(F,J)$. This proves~\eqref{equation:G} and we are done.
\end{proof}

As an immediate consequence of Lemmas~\ref{1:1} and~\ref{lem:1:medio} we get:

\begin{cor}\label{1:2}
Let $K$ be a compact space such that, for each $n\in \Nat$ and each closed set $F\subset K^n$, there is a decreasing
sequence $(F_p)_{p\in \Nat}$ of closed separable subsets of~$K^n$ such that 
$F=\bigcap_{p\in \Nat} F_p$.
Then $\Ba(C_p(K))=\Bo(C_p(K))$. 
\end{cor}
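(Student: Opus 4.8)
The plan is to observe that this is purely a matter of feeding Lemma~\ref{lem:1:medio} into the hypothesis of Lemma~\ref{1:1}; all of the substantive work has already been carried out in those two lemmas. Concretely, the goal is to verify the single hypothesis of Lemma~\ref{1:1}, namely that $u(F,J)\in\Ba(C_p(K))$ for every $n\in\Nat$, every closed set $F\subset K^n$ and every $J\in\mathfrak{G}^n$; once this is established, Lemma~\ref{1:1} delivers the desired equality $\Ba(C_p(K))=\Bo(C_p(K))$.

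First I would fix $n\in\Nat$, a closed set $F\subset K^n$, and an arbitrary $J\in\mathfrak{G}^n$. The hypothesis of the corollary applies to $F$ (it is stated for each $n$ and each closed $F\subset K^n$) and produces a decreasing sequence $(F_p)_{p\in\Nat}$ of closed separable subsets of $K^n$ with $F=\bigcap_{p\in\Nat}F_p$. Crucially, this sequence depends only on $F$ and not on $J$, which is exactly what is needed, since Lemma~\ref{lem:1:medio} is stated for a fixed but arbitrary $J$. Applying Lemma~\ref{lem:1:medio} to this sequence yields
$$
	u(F,J)=u\Bigl(\bigcap_{p\in\Nat}F_p,\,J\Bigr)\in\Ba(C_p(K)).
$$

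Since $n$, $F$ and $J$ were arbitrary, this confirms the hypothesis of Lemma~\ref{1:1}, and the conclusion follows at once. There is essentially no obstacle to overcome here: the corollary is a formal consequence of the two lemmas, and the only point worth flagging is the order of quantifiers, i.e.\ that the approximating sequence $(F_p)$ is chosen once for $F$ and then works uniformly across all $J\in\mathfrak{G}^n$. The genuine difficulties---the representation of pointwise-open sets via the countable family $\{u(\overline{A_J},J):J\in\JJ\}$ in Lemma~\ref{1:1}, and the reduction \eqref{equation:G} to the separable case in Lemma~\ref{lem:1:medio}---have already been dispatched, so the proof of the corollary itself is a one-line combination.
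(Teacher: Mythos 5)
Your proposal is correct and coincides with the paper's own treatment: the paper presents Corollary~\ref{1:2} as an immediate consequence of Lemmas~\ref{1:1} and~\ref{lem:1:medio}, exactly the one-line combination you describe. Your remark about the order of quantifiers (the sequence $(F_p)$ depending only on $F$ and working uniformly in $J$) is a valid and accurate observation about why the combination goes through.
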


It turns out that the previous criterion can be applied to~$\too$, as we next show.

\begin{lemma}\label{2:1}
For each closed set $F\subset 2^{\omega_1}$ there is a decreasing
sequence $(F_p)_{p\in \Nat}$ of closed separable subsets of~$\too$ such that 
$F=\bigcap_{p\in \Nat} F_p$.
\end{lemma}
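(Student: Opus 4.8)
The plan is to produce the sequence explicitly, as a canonical family of ``coarsenings'' of $F$ indexed by a resolution parameter $m\in\mathbb N$, exploiting that $|\omega_1|\le\mathfrak c$ so that $\omega_1$ may be regarded as a set of coordinates carrying only a countable amount of separating data. First I would fix an injection of $\omega_1$ into the Cantor set $\Delta=2^{\mathbb N}$, writing each $\gamma<\omega_1$ as a sequence $(\gamma[k])_{k\in\mathbb N}$ exactly as in the proof of Lemma~\ref{lem:MeasurabilityType}. For $m\in\mathbb N$ this splits $\omega_1$ into the finitely many depth-$m$ classes $C_s=\{\gamma:(\gamma[1],\dots,\gamma[m])=s\}$, $s\in 2^m$. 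Call a finite $t\subset\omega_1$ \emph{$m$-fine} if its elements have pairwise distinct depth-$m$ truncations (equivalently, $t$ meets each class $C_s$ at most once), and set
\[
F_m:=\bigcap\bigl\{\pi_t^{-1}(\pi_t(F)):t\subset\omega_1\text{ finite and }m\text{-fine}\bigr\},
\]
where $\pi_t:\too\to 2^t$ is the projection.

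Next I would check the three routine properties. Each $F_m$ is closed, being an intersection of preimages of the compact sets $\pi_t(F)$, and it contains $F$ since $F\subset\pi_t^{-1}(\pi_t(F))$ for every~$t$. The sequence decreases: if the elements of $t$ are already distinct at depth $m$ then they are distinct at depth $m+1$, so every $m$-fine $t$ is $(m+1)$-fine and the defining intersection only grows, giving $F_{m+1}\subset F_m$. Finally $\bigcap_m F_m=F$: if $x\notin F$ then $x\!\restriction\! t_0\notin\pi_{t_0}(F)$ for some finite $t_0$, and since the finitely many (distinct) coordinates of $t_0$ are separated at some depth~$m$, the set $t_0$ is $m$-fine, whence $x\notin F_m$.

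The hard part will be proving that each $F_m$ is \emph{separable}. The key feature is that an $m$-fine $t$ selects at most one coordinate from each of the finitely many classes $C_s$, so the defining constraints never link two coordinates lying in a common class: inside a class the coordinates of a point of $F_m$ vary freely, while across classes they are tied together only through the finite patterns that $F$ realises on transversals. My plan is to capture this by exhibiting $F_m$ as a continuous image of a separable space. One codes a point of $F_m$ by its free within-class data together with the transversal-pattern data inherited from $F$, presenting $F_m$ as the continuous image of a product of Cantor cubes $\prod_s 2^{C_s}$ with a compact metrisable (hence separable) factor recording the transversal patterns. Such a product is separable by the Hewitt--Marczewski--Pondiczery theorem precisely because each $|C_s|\le|\omega_1|\le\mathfrak c$, and a continuous image of a separable space is separable. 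The guiding model is the closed set $F=\{(u,v)\in\too\times\too:\supp u\cap\supp v=\varnothing\}$, which is the continuous image of the separable cube $3^{\omega_1}$ under the coordinatewise map $0\mapsto(0,0)$, $1\mapsto(1,0)$, $2\mapsto(0,1)$; the general coding is a bookkeeping elaboration of this $k$-ary idea, and making it precise for an arbitrary closed $F$ (where the admissible transversal patterns depend genuinely on the coordinates, not merely on their classes) is the main obstacle. Once separability of each $F_m$ is secured, $(F_m)_{m\in\mathbb N}$ is the desired decreasing sequence of closed separable sets with intersection $F$.
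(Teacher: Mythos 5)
Your outer skeleton is sound: the sets $F_m$ you define are indeed closed, contain $F$, decrease with $m$, and satisfy $\bigcap_m F_m=F$, exactly as you argue. The fatal problem is the step you defer: the sets $F_m$ are in general \emph{not} separable, for any choice of the injection $\omega_1\hookrightarrow\Delta$, so the ``main obstacle'' you identify cannot be overcome. Concretely, take $F:=\{x\in 2^{\omega_1}:\ x_\alpha\le x_\beta \mbox{ whenever }\alpha\le\beta\}$, the closed set of indicators of final segments of~$\omega_1$. For every finite $t$, $\pi_t(F)$ is the set of all nondecreasing patterns on~$t$; since being nondecreasing is a pairwise condition and every pair inside an $m$-fine set is $m$-fine, your set $F_m$ equals $\{x:\ x_\alpha\le x_\beta\mbox{ whenever }\alpha<\beta\mbox{ lie in distinct depth-}m\mbox{ classes}\}$. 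Now, for any injection $\omega_1\to\Delta$ there is $m_0$ such that for all $m\ge m_0$ at least two depth-$m$ classes are uncountable: if at each depth only one class $U_m$ were uncountable, then $\omega_1\setminus\bigcap_m U_m$ would be countable, and any two ordinals in $\bigcap_m U_m$ would have the same image in $\Delta$, contradicting injectivity. Uncountable subsets of $\omega_1$ are cofinal, so fix cofinal classes $C\neq C'$ at some depth $m\ge m_0$, and let $D\subset F_m$ be countable. Let $\beta^*<\omega_1$ be the supremum, over those $d\in D$ which take the value $1$ somewhere on $C'$, of $\min\{\gamma\in C':\ d_\gamma=1\}$. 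Pick $\alpha\in C$ with $\alpha>\beta^*$, then $\delta\in C'$ with $\delta>\alpha$, and consider $x:=1_{[\delta,\omega_1)}\in F\subset F_m$ together with its neighbourhood $N:=\{y:\ y_\alpha=0,\ y_\delta=1\}$. If $d\in D$ vanishes on $C'$, then $d_\delta=0$, so $d\notin N$; otherwise there is $\gamma\in C'$ with $d_\gamma=1$ and $\gamma\le\beta^*<\alpha$, and the defining constraint of $F_m$ applied to the pair $\gamma<\alpha$ (which lies in distinct classes) forces $d_\alpha=1$, so again $d\notin N$. Thus no countable subset of $F_m$ is dense, and $F_m$ fails to be separable for every $m\ge m_0$.

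There is also a structural reason why no variant of this strategy can succeed: your construction uses nothing about $\omega_1$ beyond $|\omega_1|\le\mathfrak{c}$, so if it worked it would prove the lemma with $\omega_1$ replaced by any $\kappa\le\mathfrak{c}$; then Corollary~\ref{1:2} together with statement (a) of the Introduction would give $\Ba(C_p(2^\kappa))=\Bo(C(2^\kappa))$ for every $\kappa\le\mathfrak{c}$, whence by Theorem~\ref{thm:Main} every $\kappa\le\mathfrak{c}$ would be a Kunen cardinal --- contradicting the consistency of ``$\mathfrak{c}$ is not Kunen''. Any correct proof must therefore use a property special to~$\omega_1$, and that is exactly what the paper does: by Parovicenko's theorem, $F$ (a compact space of weight $\le\omega_1$) is a continuous image of $\beta\Nat\setminus\Nat$, say $F=q(\beta\Nat\setminus\Nat)$; the map $q$ extends coordinatewise to a continuous $g:\beta\Nat\to 2^{\omega_1}$ (using that $\beta\Nat$ is zero-dimensional), and $F_p:=g(\beta\Nat\setminus\{1,\dots,p\})$ is then a decreasing sequence of closed \emph{separable} sets (continuous images of separable compacta) with intersection $g(\beta\Nat\setminus\Nat)=F$. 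The separability your coding scheme tries to manufacture combinatorially is obtained there for free from the separability of $\beta\Nat\setminus\{1,\dots,p\}$; compare Remark~\ref{remark:Parovicenko}, where the authors stress that being a ``Parovicenko cardinal'' is the only property of $\omega_1$ used, and that the analogue of this lemma for general Kunen cardinals is open --- a question your purely combinatorial argument, were it correct, would have settled.
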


\begin{proof}
By Parovicenko's theorem (cf. \cite[3.12.18]{eng}), every compact space of weight less than or equal to~$\omega_1$
(like~$F$) is a continuous image of~$\beta\Nat\sm\Nat$.
Let $q:\beta\Nat\sm\Nat \to 2^{\omega_1}$ be a continuous mapping 
with $q(\beta\Nat\sm\Nat)=F$. Then $q$ can be extended to a continuous mapping $g:\beta\Nat \to 2^{\omega_1}$. Indeed, fix 
$\alpha<\omega_1$, let $\pi_\alpha: 2^{\omega_1} \to \{0,1\}$ be the $\alpha$-th coordinate projection and 
apply Tietze's theorem to find a continuous mapping $f_\alpha:\beta\Nat \to [0,1]$ such that
$f_\alpha|_{\beta\Nat\sm\Nat}=\pi_\alpha \circ q$.  Since $f_\alpha^{-1}(\{0\})$ and 
$f_\alpha^{-1}(\{1\})$ are disjoint closed subsets of the {\em $0$-dimensional} compact space~$\beta\Nat$, there is
a clopen set $A_\alpha\subset \beta\Nat$ such that $f_\alpha^{-1}(\{0\})\cap A_\alpha=\emptyset$
and $f_\alpha^{-1}(\{1\})\subset A_\alpha$. Now, it is easy to check 
that the continuous mapping $g:\beta\Nat \to 2^{\omega_1}$ defined by
$\pi_\alpha \circ g:=1_{A_\alpha}$ for all $\alpha < \omega_1$ satisfies $g|_{\beta\Nat \sm \Nat}=q$.

For each $p\in\Nat$, the set $Z_p:=\beta\Nat\sm \{1,\dots,p\}$ is closed and separable, hence
the same holds for $F_p:=g(Z_p) \subset 2^{\omega_1}$. Since
$(Z_{p})_{p\in \Nat}$ is a decreasing sequence of compact sets and~$g$ is continuous, we have
$$
	\bigcap_{p\in \Nat}F_p=
	\bigcap_{p\in  \Nat}g(Z_p)=
	g\Bigl(\bigcap_{p\in \Nat} Z_p\Bigr)=
	g(\beta\Nat\sm \Nat)=q(\beta\Nat\sm \Nat)=F,
$$
and the proof is over.
\end{proof}

Finally, we can give an alternative proof of the following:

\begin{thm}\label{2omega1}
$\Ba(C_p(2^{\omega_1})) = \Bo(C(2^{\omega_1}))$.
\end{thm}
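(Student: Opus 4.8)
The plan is to verify that $K=2^{\omega_1}$ satisfies the hypothesis of Corollary~\ref{1:2} and then simply invoke that corollary together with statement~(a) from the introduction. Concretely, Corollary~\ref{1:2} asserts that $\Ba(C_p(K))=\Bo(C_p(K))$ whenever every closed subset $F\subset K^n$ can be written as a decreasing intersection $\bigcap_{p\in\Nat}F_p$ of closed \emph{separable} subsets of $K^n$. So the whole theorem reduces to establishing this approximation property for all finite powers of $2^{\omega_1}$.

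The key observation is that any finite power $(2^{\omega_1})^n$ is itself homeomorphic to $2^{\omega_1}$, since $\omega_1\cdot n=\omega_1$ as cardinals and $2^{\omega_1}\times\cdots\times 2^{\omega_1}\cong 2^{\omega_1\cdot n}\cong 2^{\omega_1}$. Thus it suffices to prove the approximation property for closed subsets of $2^{\omega_1}$ itself, which is exactly the content of Lemma~\ref{2:1}. Hence the proof of Theorem~\ref{2omega1} proceeds as follows. First I would note that for each $n\in\Nat$ the space $(2^{\omega_1})^n$ is homeomorphic to $2^{\omega_1}$, so a given closed set $F\subset(2^{\omega_1})^n$ corresponds to a closed subset of $2^{\omega_1}$; applying Lemma~\ref{2:1} to it yields a decreasing sequence of closed separable sets intersecting to $F$ (separability is a topological invariant, so it transfers back through the homeomorphism). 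This confirms the hypothesis of Corollary~\ref{1:2}, giving $\Ba(C_p(2^{\omega_1}))=\Bo(C_p(2^{\omega_1}))$.

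Finally, to upgrade the conclusion from $\Bo(C_p(2^{\omega_1}))$ to $\Bo(C(2^{\omega_1}))$, I would invoke statement~(a) from the introduction, namely that $\Bo(C_p(2^\kappa))=\Bo(C(2^\kappa))$ for every $\kappa$ (this follows from the existence of a pointwise Kadec equivalent norm on $C(2^\kappa)$). Combining $\Ba(C_p(2^{\omega_1}))=\Bo(C_p(2^{\omega_1}))$ with $\Bo(C_p(2^{\omega_1}))=\Bo(C(2^{\omega_1}))$ yields the desired equality $\Ba(C_p(2^{\omega_1}))=\Bo(C(2^{\omega_1}))$.

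I do not expect any genuine obstacle here, since all the hard work has been absorbed into Lemma~\ref{2:1} (whose proof rests on Parovi\v{c}enko's theorem and a Tietze-type extension through $\beta\Nat$) and into Corollary~\ref{1:2}. The only point requiring a moment of care is the passage between $K^n$ and $K$: one must check that the homeomorphism $(2^{\omega_1})^n\cong 2^{\omega_1}$ respects both closedness and separability, but both are purely topological properties and hence preserved, so this step is routine.
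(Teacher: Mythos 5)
Your proposal is correct and follows exactly the paper's own proof: Lemma~\ref{2:1} plus the homeomorphism $(2^{\omega_1})^n \cong 2^{\omega_1}$ verify the hypothesis of Corollary~\ref{1:2}, giving $\Ba(C_p(2^{\omega_1})) = \Bo(C_p(2^{\omega_1}))$, and statement~(a) of the introduction (the pointwise Kadec renorming) upgrades this to $\Bo(C(2^{\omega_1}))$. The only point you elaborate beyond the paper's terse wording is the transfer of closedness and separability through the homeomorphism, which the paper compresses into a parenthetical remark.
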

\begin{proof}
As we pointed out in the introduction, for any cardinal~$\kappa$ we always have
$$
	\Bo(C_p(2^\kappa))=\Bo(C(2^\kappa)).
$$ 
On the other hand, $\Ba(C_p(2^{\omega_1})) = \Bo(C_p(2^{\omega_1}))$, 
by Corollary~\ref{1:2} and Lemma~\ref{2:1} (bear in mind that
all finite powers of~$2^{\omega_1}$ are homeomorphic to~$2^{\omega_1}$).
\end{proof}

\begin{remark}\label{remark:Parovicenko}
Let us say that $\kappa$ is a {\em Parovicenko cardinal} if every compact space of 
weight less than or equal to~$\kappa$ is a continuous image of $\beta\Nat\sm \Nat$.
This is the only property of the cardinal~$\omega_1$ that we have used
in the proofs of Lemma~\ref{2:1} and Theorem~\ref{2omega1}, so we have indeed shown 
that: 
\begin{center}
{\em $\Ba(C_p(2^\kappa)) = \Bo(C(2^\kappa))$ whenever $\kappa$ is a Parovicenko cardinal}. 
\end{center}
Notice that van Douwen and Przymusi\'{n}ski \cite{dou-pri}
proved that, under Martin's axiom, all cardinals $<\con$ are Parovicenko cardinals.
We do not known whether the analogue of Lemma \ref{2:1} for~$2^\kappa$ 
is true if $\kappa$ is a Kunen cardinal. 
\end{remark}

Recall that a Banach space $X$ is {\em measure-compact} (in its weak topology) if and only if, 
for each probability measure $\mu$ on $\Ba(X_w)$, there is a separable subspace 
$X_0$ of~$X$ such that $\mu^\ast(X_0)=1$. Such a property has been considered in connection with Pettis integration,
see e.g. \cite{edg1,tal}. The following consequence
of Theorem~\ref{2omega1} was first proved in~\cite{ple5} by a completely different approach.

\begin{cor}\label{cor:Grzegorz}
$C(\too)$ is measure-compact.
\end{cor}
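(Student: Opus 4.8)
The plan is to convert measure-compactness into a statement about genuine Borel measures on a metric space, where a classical separable-support theorem applies; the one nontrivial input is Theorem~\ref{2omega1}. Writing $X:=C(2^{\omega_1})$, the notion concerns probability measures on $\Ba(X_w)=\Ba(C_w(2^{\omega_1}))$. Combining the inclusions $\Ba(C_p(2^{\omega_1}))\subseteq\Ba(C_w(2^{\omega_1}))\subseteq\Bo(C(2^{\omega_1}))$ recalled in the introduction with the equality $\Ba(C_p(2^{\omega_1}))=\Bo(C(2^{\omega_1}))$ of Theorem~\ref{2omega1}, all three $\sigma$-algebras coincide; in particular
$$
\Ba(X_w)=\Bo(C(2^{\omega_1})),
$$
the Borel $\sigma$-algebra for the \emph{norm} topology.

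Consequently any probability measure $\mu$ on $\Ba(X_w)$ is just a Borel probability measure on the metric space $(X,\|\cdot\|_\infty)$. Since for an infinite compact space~$K$ the density character of $C(K)$ equals the weight of~$K$, the space~$X$ has density character~$\omega_1$. I would then invoke the classical theorem of Marczewski and Sikorski: every finite Borel measure on a metric space whose density character is not a real-valued measurable cardinal is concentrated on a separable subset. As $\omega_1$ is a successor cardinal it is certainly not real-valued measurable, so there is a separable set $S\subseteq X$ with $\mu^{\ast}(S)=1$; taking $X_0:=\overline{\mathrm{span}}\,S$ yields a separable closed subspace with $\mu^{\ast}(X_0)=1$, which is precisely what measure-compactness requires.

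The mathematical weight rests entirely on the last step: one must either quote the Marczewski--Sikorski separable-support theorem or reprove it. Its cardinal-theoretic content is merely that a set of cardinality~$\omega_1$ carries no atomless $\sigma$-additive probability measure, i.e. that $\omega_1$ is not real-valued measurable. Everything before it is formal bookkeeping with the Baire and Borel $\sigma$-algebras, for which Theorem~\ref{2omega1} provides the single missing equality.
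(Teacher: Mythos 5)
Your proof is correct and follows essentially the same route as the paper: combine Theorem~\ref{2omega1} with the sandwich $\Ba(C_p)\subseteq\Ba(C_w)\subseteq\Bo(C)$ to identify $\Ba(X_w)$ with the norm-Borel $\sigma$-algebra, then apply the Marczewski--Sikorski separable-support theorem using that the density character $\omega_1$ is not real-valued measurable. Your extra details (density character equals weight, passing from the separable support to its closed linear span) are correct refinements of steps the paper leaves implicit.
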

\begin{proof}
Let $\mu$ be a probability measure on $\Ba(C_w(\too))=\Bo(C(\too))$.
Since the metric space $C(\too)$ has density character~$\omega_1$ (which is not 
real-valued measurable), a classical result due to Marczewski and Sikorski (cf.~\cite[Theorem III]{whe2})
ensures that $\mu$ has a separable support.
\end{proof}

In Corollary \ref{cor:Grzegorz} one can replace $\omega_1$ by any $\kappa$ which is a Kunen cardinal, since
in such a case no cardinal $\kappa_1\le\kappa$ is real-valued measurable, see~\cite{Kunen68}.
However, for $\kappa>\omega_1$ the result of \cite{ple5} is more general: under the absence of weakly
inaccessible cardinals $C(2^{\kappa})$ is measure-compact for every $\kappa$.

Let us also mention another consequence of Theorem \ref{2omega1}; cf.~\cite{Plachky92}
for some results on Borel structures in nonseparable metric spaces. We refer to \cite{vanDouwen} for the definition of cardinal $\mathfrak{p}$.

\begin{cor}[$\mathfrak{p}>\omega_1$]
$\Bo(C(2^{\omega_1}))$ is countably generated.
\end{cor}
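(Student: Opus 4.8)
The plan is to identify $\Bo(C(\too))$ with a countably generated sub-$\sigma$-algebra. First I would record, using Theorem~\ref{2omega1} together with the identity $\Ba(C_p(K))=\sigma(\{\delta_t:t\in K\})$ from the terminology section, that
$\Bo(C(\too))=\Ba(C_p(\too))=\sigma(\{\delta_t:t\in \too\})$.
Since $\omega_1\le\mathfrak{c}$, the Hewitt--Marczewski--Pondiczery theorem gives that $\too$ is separable; fixing a countable dense set $\{t_n:n\in\Nat\}\subset\too$, I would set $\sigma_0:=\sigma(\{\delta_{t_n}:n\in\Nat\})$, which is a countably generated sub-$\sigma$-algebra of $\Bo(C(\too))$ because each $\delta_{t_n}$ is norm-continuous. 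The whole goal then reduces to proving the equality $\Bo(C(\too))=\sigma_0$.

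The structural key is that the sup-norm is already $\sigma_0$-measurable: for $f\in C(\too)$ one has $\|f\|_\infty=\sup_{n}|f(t_n)|$, since $f$ is continuous and $\{t_n\}$ is dense. Hence $\|f-g\|_\infty=\sup_n|f(t_n)-g(t_n)|$ is $\sigma_0$-measurable in $f$ for each fixed $g$, so every norm ball $\{f:\|f-g\|_\infty<q\}$ lies in $\sigma_0$; in other words $\sigma_0$ contains the ball $\sigma$-algebra of the metric space $C(\too)$. (This is precisely the feature distinguishing $C(\too)$ from the full product $\sigma$-algebra on $2^{\omega_1}$, which is also generated by $\omega_1$ sets but is never countably generated.) What remains is the classical gap, for a nonseparable metric space, between the ball $\sigma$-algebra and the Borel $\sigma$-algebra: I must still place an arbitrary norm-open set $G$ in $\sigma_0$. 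Fixing a dense set $D=\{x_\alpha:\alpha<\omega_1\}$ and writing $G=\bigcup_{q\in\mathbb{Q}^+}\{x:d(x,D_q)<q\}$ with $D_q:=\{x_\alpha: B(x_\alpha,q)\subseteq G\}\subseteq D$, this reduces to showing that $\{x:d(x,A)<q\}\in\sigma_0$ for every $A\subseteq D$ and every rational $q>0$.

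This is where the hypothesis $\mathfrak{p}>\omega_1$ enters, through $Q$-sets. The coarse evaluation map $\Phi:=(\delta_{t_n})_{n}\colon C(\too)\to\mathbb{R}^{\Nat}$ is a continuous injection with $\sigma_0=\Phi^{-1}(\Bo(\mathbb{R}^{\Nat}))$, and $\Phi(D)\subseteq\mathbb{R}^{\Nat}$ has cardinality $\omega_1$. By the characterization of $\mathfrak{p}$ (see~\cite{vanDouwen}), $\mathfrak{p}>\omega_1$ forces every subset of size $\omega_1$ of a Polish space to be a $Q$-set, so that every subset of $\Phi(D)$ is relatively Borel. Transporting the distance function through $\Phi$, the set $\{x:d(x,A)<q\}$ equals the $\Phi$-preimage of $\{u\in\mathbb{R}^{\Nat}:d_\infty(u,\Phi(A))<q\}$, where $d_\infty$ is the sup-metric; hence it suffices to verify that the $d_\infty$-neighbourhood of the $Q$-set $\Phi(A)$ is Borel in $\mathbb{R}^{\Nat}$ for the product topology. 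This is exactly the sort of measurability statement about Borel structures in nonseparable metric spaces treated in~\cite{Plachky92}, and I would invoke it (or reprove it directly from the $Q$-set property) to conclude $\Bo(C(\too))=\sigma_0$.

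The main obstacle is precisely this last reduction, namely the coincidence of the Borel and ball $\sigma$-algebras for the nonseparable space $C(\too)$. The soft steps --- reducing to $\sigma_0$ and observing the $\sigma_0$-measurability of the norm --- are routine; but passing from ``every subset of the $\omega_1$-sized dense set $D$ is $\sigma_0$-measurable'' to ``every norm-open set is $\sigma_0$-measurable'' genuinely requires the full strength of $\mathfrak{p}>\omega_1$. One cannot dispense with it, since the analogous conclusion fails for the product $\sigma$-algebra on $2^{\omega_1}$, and the entire difficulty is concentrated in controlling the $d_\infty$-neighbourhoods of $Q$-sets inside $\mathbb{R}^{\Nat}$.
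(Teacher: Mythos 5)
Your setup coincides with the paper's: the countable dense set $\{t_n:n\in\Nat\}\subset\too$ and $\sigma_0=\sigma(\{\delta_{t_n}:n\in\Nat\})$ are exactly the $\Sigma$ of the paper's proof, and your soft steps (measurability of the sup-norm, membership of all balls in~$\sigma_0$, the decomposition $G=\bigcup_{q}\{x:d(x,D_q)<q\}$ over subsets of an $\omega_1$-sized dense set $D$) are correct. The fatal problem is your final reduction: you claim it suffices to show that $N_A:=\{u\in\mathbb{R}^{\Nat}:d_\infty(u,\Phi(A))<q\}$ is Borel in the product topology of~$\mathbb{R}^{\Nat}$, and that this follows from the $Q$-set property of $\Phi(D)$. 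That statement is false. Take $A=\{\pi_\alpha:\alpha<\omega_1\}$, where $\pi_\alpha\in C(\too)$ is the $\alpha$-th coordinate projection (we may include these in $D$), and $q=1/2$. Then $\Phi(\pi_\alpha)=(t_n(\alpha))_{n}\in\{0,1\}^{\Nat}$, and density of $\{t_n\}$ in~$\too$ gives, for $\alpha\neq\beta$, some $n$ with $t_n(\alpha)\neq t_n(\beta)$; hence these $\omega_1$ points have pairwise $d_\infty$-distance~$1$ and $N_A\cap\{0,1\}^{\Nat}=\Phi(A)$. If $N_A$ were Borel in $\mathbb{R}^{\Nat}$, then $\Phi(A)$ would be an uncountable Borel subset of the Polish space $\{0,1\}^{\Nat}$, hence of cardinality $\mathfrak{c}$ by the perfect set property, contradicting $\omega_1<\mathfrak{c}$ (which follows from $\mathfrak{p}>\omega_1$). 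So the very sets you must handle yield non-Borel $d_\infty$-neighbourhoods; no appeal to $Q$-sets (nor to \cite{Plachky92}, which characterizes separability and contains no such statement) can establish your reduction.

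This does not contradict the corollary itself: $\Phi^{-1}(N_A)$ \emph{does} belong to $\sigma_0=\Phi^{-1}(\Bo(\mathbb{R}^{\Nat}))$, but the Borel witness $B$ need only satisfy $B\cap\Phi(C(\too))=N_A\cap\Phi(C(\too))$, and by the above it cannot be $N_A$ itself. Your transport to $\mathbb{R}^{\Nat}$ discards exactly the information that matters, namely that everything happens relative to the (non-Borel) range of~$\Phi$, i.e., to sequences that extend continuously to~$\too$; the $Q$-set property, which gives relative Borelness only inside $\Phi(D)$, cannot produce such a witness~$B$. The paper closes the gap in an entirely different way, applying $\mathfrak{p}>\omega_1$ on the domain side rather than inside the Banach space: by \cite[Theorem~6.2]{vanDouwen}, every $x\in\too$ is the limit of a sequence $(a_k)$ from the countable dense set, so $\delta_x=\lim_k\delta_{a_k}$ pointwise on $C(\too)$ is $\sigma_0$-measurable; hence $\Ba(C_p(\too))=\sigma(\{\delta_x:x\in\too\})\subset\sigma_0$, and Theorem~\ref{2omega1} then gives $\Bo(C(\too))=\Ba(C_p(\too))\subset\sigma_0$. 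Some argument of this kind, exploiting continuity of the elements of $C(\too)$ over all of~$\too$, is what your proposal is missing.
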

\begin{proof}
Let $A\subset 2^{\omega_1}$ be a countable dense set
and let $\Sigma$ be the $\sigma$-algebra on~$C(2^{\omega_1})$ generated by $\{\delta_a:a\in A\}$. 
Clearly, $\Sigma$ is countably generated.
It follows from $\mathfrak{p}>\omega_1$ that
every $x\in 2^{\omega_1}$ is a limit of a converging sequence from~$A$, 
see e.g. \cite[Theorem~6.2]{vanDouwen}. This implies
that $\delta_x$ is $\Sigma$-measurable for every $x\in 2^{\omega_1}$, and we get
$\Sigma=\Ba(C_p(2^{\omega_1})) = \Bo(C(2^{\omega_1}))$, which completes the proof.
\end{proof}

\section{Non weak Baire measurable norms}\label{section:Norms}

An equivalent norm on a Banach space~$X$ is
$\Ba(X_w)$-measurable (as a real-valued function defined on~$X$) if and only if 
its balls belong to~$\Ba(X_w)$. Clearly, this implies that all singletons belong 
to~$\Ba(X_w)$, which is equivalent to saying that the dual $X^*$ is $w^*$-separable, cf.
\cite[Theorem 1.5.3]{hei}. There are Banach spaces with $w^*$-separable dual which admit 
a non $\Ba(X_w)$-measurable equivalent norm, like $\ell^\infty$ and the Johnson-Lindenstrauss
spaces, see~\cite{rod9}. Obviously, if the equality $\Ba(X_w)=\Bo(X)$ holds, then 
all equivalent norms on~$X$ are $\Ba(X_w)$-measurable. The aim of this section is to 
show that the converse holds for $C(2^\kappa)$ and $\ell^1(\kappa)$, see Corollary~\ref{renormell1}. 

Recall that a function $f:\Omega \to X$ from a measurable space $(\Omega,\Sigma)$ to a Banach space~$X$
is called {\em scalarly measurable} if the composition $x^*\circ f$ is $\Sigma$-measurable for every $x^*\in X^*$, i.e.
$f$ is $\Sigma$-$\Ba(X_w)$-measurable. We shall also use the following notion introduced in~\cite{gra-alt}:

\begin{defi}
Let $X$ be a Banach space. A family
$\{(x_\alpha,x_\alpha^*):\alpha \in I\} \subset X \times X^*$ is called
a {\em bounded almost biorthogonal system (BABS) of type $\eta \in [0,1)$} if
\begin{enumerate}
\item[(i)] $\{x_\alpha:\alpha \in I\}$ and $\{x_\alpha^*:\alpha\in I\}$ are bounded,
\item[(ii)] $x_\alpha^*(x_\alpha)=1$ for every $\alpha \in I$,
\item[(iii)] $|x_\alpha^*(x_\beta)|\leq \eta$ whenever $\alpha \neq \beta$.
\end{enumerate}
\end{defi}

\begin{lem}\label{pro:NonMeasurableFromUBABS}
Let $X$ be a Banach space having a BABS $\{(x_\alpha,x_\alpha^*):\alpha \in I\}$ of type $\eta \in [0,1)$.
Suppose there is a measurable space $(\Omega,\Sigma)$ and a mapping $i: \Omega \to I$ such that:
\begin{itemize}
\item the function $f:\Omega \to X$ defined by $f(\theta):=x_{i(\theta)}$ is scalarly
measurable,
\item there is $A\subset I$ such that $i^{-1}(A)\not \in \Sigma$.
\end{itemize}
Then there is an equivalent norm on~$X$ which 
is not $\Ba(X_w)$-measurable.
\end{lem}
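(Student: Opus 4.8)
The plan is to manufacture the required norm by adding to $\|\cdot\|$ a seminorm built from the functionals $x_\alpha^*$ ($\alpha\in A$), which will ``read off'' membership in~$A$ through the vectors $x_\alpha$, and then to reach a contradiction via a composition-of-measurable-maps argument. First I would record the relevant bounds: put $M:=\sup_{\alpha\in I}\|x_\alpha\|$ and $N:=\sup_{\alpha\in I}\|x_\alpha^*\|$, both finite by condition~(i) of the BABS. Since $1=x_\alpha^*(x_\alpha)\le N\|x_\alpha\|$, each $\|x_\alpha\|$ lies in $[1/N,M]$. I then introduce the seminorm
$$
	p(x):=\sup_{\alpha\in A}|x_\alpha^*(x)| \qquad (x\in X),
$$
which is everywhere finite because $p\le N\|\cdot\|$. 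The decisive elementary computation is that $p$ separates the two classes of vectors: using $x_\beta^*(x_\beta)=1$ together with $|x_\alpha^*(x_\beta)|\le\eta$ for $\alpha\ne\beta$, one obtains $p(x_\beta)=1$ when $\beta\in A$ and $p(x_\beta)\le\eta$ when $\beta\notin A$.

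Next I would fix a weight $\lambda>M/(1-\eta)$ and define
$$
	|||x|||:=\|x\|+\lambda\,p(x).
$$
This is an equivalent norm, since $\|x\|\le |||x|||\le(1+\lambda N)\|x\|$, and it inherits the separation in a quantitative form: for $\beta\in A$ we have $|||x_\beta|||\ge\lambda$, whereas for $\beta\notin A$ we have $|||x_\beta|||\le M+\lambda\eta$. The choice of~$\lambda$ is exactly what forces $\lambda>M+\lambda\eta$, so the two ranges are strictly separated by the threshold $t:=M+\lambda\eta$.

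Finally, I would argue by contradiction. Suppose $|||\cdot|||$ is $\Ba(X_w)$-measurable. Since $f$ is scalarly measurable---that is, $\Sigma$-$\Ba(X_w)$-measurable---the composition $\theta\mapsto|||f(\theta)|||=|||x_{i(\theta)}|||$ is $\Sigma$-measurable. By the separation just established,
$$
	i^{-1}(A)=\{\theta\in\Omega : \, |||x_{i(\theta)}|||>t\},
$$
which would then belong to~$\Sigma$, contradicting the hypothesis $i^{-1}(A)\notin\Sigma$. Hence $|||\cdot|||$ is the desired non $\Ba(X_w)$-measurable equivalent norm.

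As for the main obstacle: the construction is short, and the only point that genuinely needs care is that the strict inequality $\eta<1$ must be leveraged to absorb the unavoidable variation of $\|x_\alpha\|$ across the interval $[1/N,M]$. This is precisely the role of the choice $\lambda>M/(1-\eta)$, and it is where the hypothesis ``type $\eta\in[0,1)$'' is essential: were $\eta=1$ permitted, the ranges of $|||x_\beta|||$ over $A$ and over its complement could overlap, and no threshold would isolate $i^{-1}(A)$. I do not anticipate any trouble from the supremum in~$p$ being taken over a possibly uncountable index set, since $p$ is never required to be measurable---only the final composed function $\theta\mapsto|||x_{i(\theta)}|||$ is, and its measurability follows formally from the measurability hypothesis placed on $|||\cdot|||$ in the contradiction step.
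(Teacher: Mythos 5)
Your proof is correct and follows essentially the same strategy as the paper: build an equivalent norm from the functionals $x_\alpha^*$ whose value at $x_\beta$ detects whether $\beta\in A$ (using $\eta<1$ for the separation), then observe that composing with the scalarly measurable $f$ would make $i^{-1}(A)$ measurable. The only difference is cosmetic---you add the seminorm $\sup_{\alpha\in A}|x_\alpha^*(\cdot)|$ and separate by a threshold, whereas the paper first renorms so that $\|x_\alpha\|_0=1$ and then takes a max with two weights $u<v<\eta^{-1}$, making the composed function take exactly the two values $u,v$.
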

\begin{proof}
Fix an equivalent norm $\|\cdot\|$ on~$X$ and set $C:=\sup\{\|x_\alpha\|:\alpha\in I\}$. 
The formula 
$$
	\|x\|_0:=C^{-1}\max\left\{\|x\|,C\sup_{\alpha\in I}|x_\alpha^*(x)| \right\}
$$
defines an equivalent norm on~$X$ (bear in mind that $\{x_\alpha^*:\alpha\in I\}$ is bounded)
such that $\|x_\alpha\|_0=1$ for all $\alpha \in I$. 
Fix $1<u<v<\eta^{-1}$ (with the convention $0^{-1}=\infty$) and set 
$b(\alpha):=u$ if $\alpha \in A$, $b(\alpha):=v$ if $\alpha \in I \setminus A$.
The formula 
$$
	|x|:=\max\left\{\|x\|_0,\sup_{\alpha\in I}b(\alpha)|x_\alpha^*(x)| \right\}
$$
defines another equivalent norm on~$X$. 

We claim that $|\cdot|$ is not $\Ba(X_w)$-measurable. To prove this, it suffices to check
that the real-valued function $\theta \mapsto |f(\theta)|$ is not $\Sigma$-measurable
(bear in mind that $f$ is $\Sigma$-$\Ba(X_w)$-measurable). 
Fix $\theta \in \Omega$. For each $\alpha \in I$ with $\alpha \neq i(\theta)$ we have $|x_\alpha^*(f(\theta))|=
|x_\alpha^*(x_{i(\theta)})|\leq \eta$ and so
$$
	b(\alpha)|x_\alpha^*(f(\theta))| \leq b(\alpha)\eta < 1 = \|f(\theta)\|_0.
$$
On the other hand, $b(i(\theta))|x_{i(\theta)}^*(f(\theta))|=b(i(\theta))>1=\|f(\theta)\|_0$. It follows that
\begin{multline*}
	|f(\theta)|=\max\left\{\|f(\theta)\|_0,\sup_{\alpha\in I}b(\alpha)|x_\alpha^*(f(\theta))| \right\}= \\ =
	b(i(\theta))=u1_{i^{-1}(A)}(\theta)+v1_{\Omega \setminus i^{-1}(A)}(\theta)
\end{multline*}
for all $\theta \in \Omega$. Since $i^{-1}(A)\not\in \Sigma$, the function $\theta \mapsto |f(\theta)|$ is 
not $\Sigma$-measurable.
\end{proof}

\begin{lem}\label{LemmaBABSKunen}
Let $X$ be a Banach space having a bounded biorthogonal system 
$\{(x_\alpha,x_\alpha^*):\alpha \in I\}$. Let $U \subset I\times I$ be a set
such that:
\begin{itemize}
\item[(a)] $\alpha \neq \beta$ for every $(\alpha,\beta)\in U$,
\item[(b)] $(\beta,\alpha) \not\in U$ whenever $(\alpha,\beta)\in U$. 
\end{itemize}
Then:
\begin{enumerate}
\item[(i)] The family 
\begin{equation}\label{equation:BABS}
	\left\{\left(x_\alpha+x_\beta,\frac{x_\alpha^*+x_\beta^*}{2}\right): \ (\alpha,\beta)\in U\right\}
	\subset
	X \times X^*
\end{equation}
is a BABS of type $1/2$.
\item[(ii)] The function $f:U\to X$ given by $f(\alpha,\beta):=x_\alpha + x_\beta$
is scalarly measurable when $U$ is equipped with the trace of $\mathcal{P}(I)\otimes\mathcal{P}(I)$.
\end{enumerate}
\end{lem}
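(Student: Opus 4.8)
The plan is to verify (i) by a direct expansion using the biorthogonality relations, and to prove (ii) by observing that a function on $I^2$ depending on a single coordinate has rectangular level sets and is therefore measurable for the product $\sigma$-algebra.

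For part~(i), write $y_{(\alpha,\beta)} := x_\alpha + x_\beta$ and $y^*_{(\alpha,\beta)} := (x_\alpha^* + x_\beta^*)/2$. Boundedness of both families is immediate from the boundedness of $\{x_\alpha\}$ and $\{x_\alpha^*\}$ together with the triangle inequality. To check $y^*_{(\alpha,\beta)}(y_{(\alpha,\beta)})=1$, I would expand $(x_\alpha^*+x_\beta^*)(x_\alpha+x_\beta)$ into its four terms; condition~(a) guarantees $\alpha\neq\beta$, so the two diagonal terms equal $1$ and the two off-diagonal terms vanish, giving $(1+1)/2=1$.

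For the type estimate in~(i), fix two distinct members $(\alpha,\beta)$ and $(\alpha',\beta')$ of $U$ and expand
$$
	y^*_{(\alpha,\beta)}(y_{(\alpha',\beta')})
	= \frac{1}{2}\bigl(x_\alpha^*(x_{\alpha'}) + x_\alpha^*(x_{\beta'}) + x_\beta^*(x_{\alpha'}) + x_\beta^*(x_{\beta'})\bigr).
$$
By biorthogonality each of the four terms is $0$ or $1$, so the whole quantity equals $S/2$, where $S$ counts how many of the coordinate equalities $\alpha=\alpha'$, $\alpha=\beta'$, $\beta=\alpha'$, $\beta=\beta'$ hold. The core of the argument is the combinatorial claim that $S\le 1$. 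I would rule out every pair of simultaneous equalities: any pair forcing $\alpha=\beta$ or $\alpha'=\beta'$ is excluded by~(a); the pair $\{\alpha=\alpha',\,\beta=\beta'\}$ would make the two members coincide; and the only genuinely new case, $\{\alpha=\beta',\,\beta=\alpha'\}$, would mean $(\alpha',\beta')=(\beta,\alpha)$ with both $(\alpha,\beta)$ and $(\beta,\alpha)$ in $U$, contradicting the antisymmetry condition~(b). Hence $S\le 1$ and $|y^*_{(\alpha,\beta)}(y_{(\alpha',\beta')})|\le 1/2$, which is exactly type~$1/2$. I expect this case distinction to be the only delicate point, and it is precisely here that hypothesis~(b) does its work.

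For part~(ii), I must show that for each $x^*\in X^*$ the map $(\alpha,\beta)\mapsto x^*(x_\alpha+x_\beta)=x^*(x_\alpha)+x^*(x_\beta)$ is measurable for the trace of $\mathcal{P}(I)\otimes\mathcal{P}(I)$ on $U$. Setting $\phi(\alpha):=x^*(x_\alpha)$, the key observation is that the function $(\alpha,\beta)\mapsto \phi(\alpha)$ has level sets of the form $\phi^{-1}(C)\times I$, which are rectangles and hence lie in $\mathcal{P}(I)\otimes\mathcal{P}(I)$; symmetrically $(\alpha,\beta)\mapsto\phi(\beta)$ is measurable with level sets $I\times\phi^{-1}(C)$. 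Their sum $(\alpha,\beta)\mapsto\phi(\alpha)+\phi(\beta)$ is then $\mathcal{P}(I)\otimes\mathcal{P}(I)$-measurable, and restricting to $U$ gives measurability for the trace $\sigma$-algebra. Since $x^*\circ f$ is exactly this sum, $f$ is scalarly measurable. This part is routine and I anticipate no obstacle beyond recording the rectangular structure of the level sets.
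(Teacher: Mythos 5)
Your proposal is correct and follows essentially the same route as the paper: part~(i) is the same four-term expansion with the same case analysis (the paper organizes it by cases on $\alpha=\alpha'$ and $\beta=\beta'$, you by ruling out pairs of coordinate equalities, with condition~(b) killing the one genuinely dangerous pair $\{\alpha=\beta',\,\beta=\alpha'\}$ in both versions), and part~(ii) is the same rectangle argument, where the paper simply writes out explicitly, via a countable union over rationals $p+q<r$, the standard fact about sums of measurable functions that you invoke as a black box.
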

\begin{proof} To prove~(i), fix $(\alpha,\beta)$ and $(\alpha',\beta')$ in~$U$. Then 
$$
	d:=(x_\alpha^*+x_\beta^*)(x_{\alpha'}+x_{\beta'})=
	\delta_{\alpha,\alpha'}+\delta_{\alpha,\beta'}+\delta_{\beta,\alpha'}+\delta_{\beta,\beta'}
$$
and therefore:
\begin{itemize}
\item If $(\alpha,\beta)=(\alpha',\beta')$, then $\alpha \neq \beta'$ and $\alpha'\neq \beta$ (by~(a)),
hence $d=2$.
\item If $\alpha=\alpha'$ and $\beta\neq \beta'$, 
then $\alpha \neq \beta'$ and $\alpha'\neq \beta$ (by~(a)), hence $d=1$.
\item If $\alpha\neq \alpha'$ and $\beta= \beta'$, 
then $\alpha \neq \beta'$ and $\alpha'\neq \beta$ (by~(a)), hence $d=1$.
\item If $\alpha\neq \alpha'$ and $\beta\neq \beta'$, 
then $d\in\{0,1\}$, because in this case we have
$\alpha \neq \beta'$ whenever $\alpha'=\beta$ (by~(b)).
\end{itemize}
It follows that \eqref{equation:BABS} is a BABS of type $1/2$.

To prove (ii), fix $x^\ast\in X^\ast$.  
For each $r\in \erre$, the set
\begin{multline*}
 \{(\alpha,\beta)\in U : \, x^\ast f(\alpha,\beta) < r\} 
= \{(\alpha,\beta)\in U : \, x^*(x_\alpha) + x^*(x_\beta) < r\}=\\ 
= \bigcup_{\substack{p,q\in\mathbb{Q}\\ p+q<r}} \{(\alpha,\beta)\in U : \, x^*(x_\alpha)<p, \, x^*(x_\beta)<q\}=\\ 
= U \cap \bigcup_{\substack{p,q\in\mathbb{Q}\\ p+q<r}}\{\alpha \in I: \, x^*(x_\alpha)<p\}\times\{\beta\in I: \,  x^*(x_\beta)<q\}
\end{multline*}
belongs to the trace of $\mathcal{P}(I)\otimes\mathcal{P}(I)$ on~$U$.
So, $f$ is scalarly measurable. 
\end{proof}

We arrive at the key result of this section.

\begin{theo}\label{renormeBABS}
Let $X$ be a Banach space having a biorthogonal system of non Kunen cardinality. 
Then there exists an equivalent norm on~$X$ 
which is not $\Ba(X_w)$-measurable.
\end{theo}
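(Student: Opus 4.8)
The plan is to combine the two preparatory lemmas of this section. Write $\{(x_\alpha,x_\alpha^*):\alpha\in I\}$ for the given biorthogonal system, so that $x_\alpha^*(x_\beta)=\delta_{\alpha\beta}$ and $|I|$ is not Kunen. I would first dispose of the case $|I|>\mathfrak{c}$, where no construction is needed: a countable $w^*$-dense subset of $X^*$ would embed $X$ into $\erre^{\Nat}$ and force $|X|\le\mathfrak{c}$, whereas the $x_\alpha$ are pairwise distinct, so $|I|\le|X|$ would give $|I|\le\mathfrak{c}$. Hence for $|I|>\mathfrak{c}$ the dual $X^*$ is not $w^*$-separable, and as recalled at the beginning of this section this already means that \emph{no} equivalent norm on $X$ is $\Ba(X_w)$-measurable; the original norm works.

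So assume $|I|\le\mathfrak{c}$. The second step is to reduce to a \emph{bounded} biorthogonal system of non-Kunen cardinality, as Lemma~\ref{LemmaBABSKunen} requires. Replacing $(x_\alpha,x_\alpha^*)$ by $(x_\alpha/\|x_\alpha\|,\|x_\alpha\|x_\alpha^*)$ I may assume $\|x_\alpha\|=1$, and then $I=\bigcup_{n}\{\alpha:\|x_\alpha^*\|\le n\}$. Here I use that the class of Kunen cardinals is downward closed and closed under countable unions: if $I=\bigcup_n S_n$ with $S_n\subseteq S_{n+1}$ and each $|S_n|$ Kunen, then every $A\subseteq I\times I$ equals $\bigcup_n A\cap(S_n\times S_n)$, which lies in $\mathcal{P}(I)\otimes\mathcal{P}(I)$. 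Since $|I|$ is not Kunen, some piece $J:=\{\alpha:\|x_\alpha^*\|\le n\}$ has $|J|$ non-Kunen; on $J$ the system is bounded and $|J|\le\mathfrak{c}$.

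The heart of the argument is to manufacture an antisymmetric index set together with a non-measurable subset, and then apply the lemmas. Embed $J\subseteq\erre$ and set $U:=\{(\alpha,\beta)\in J\times J:\alpha<\beta\}$, which satisfies conditions (a) and~(b) of Lemma~\ref{LemmaBABSKunen}. The point of using the order of $\erre$ is that $U$ and its reflection $U^{\mathrm{op}}$ both belong to $\mathcal{P}(J)\otimes\mathcal{P}(J)$, being traces of the open set $\{x<y\}\subseteq\erre^2$. Since $|J|$ is not Kunen, Lemma~\ref{lem:MoreNoKunen}(i) says that the trace of $\mathcal{P}(J)\otimes\mathcal{P}(J)$ on the off-diagonal $\Omega=\{(\alpha,\beta):\alpha\neq\beta\}$ is not all of $\mathcal{P}(\Omega)$. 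Because $\Omega=U\sqcup U^{\mathrm{op}}$ is a \emph{measurable} partition, a routine gluing shows that the trace $\Sigma_U$ of $\mathcal{P}(J)\otimes\mathcal{P}(J)$ on $U$ is not all of $\mathcal{P}(U)$ either: were it full, then by reflection the trace on $U^{\mathrm{op}}$ would be full too, and for any $X_0\subseteq\Omega$ one could glue the witnesses on $U$ and on $U^{\mathrm{op}}$ along the measurable sets $U,U^{\mathrm{op}}$ to realize $X_0$ as a trace, a contradiction. Fix $A\subseteq U$ with $A\notin\Sigma_U$. Now Lemma~\ref{LemmaBABSKunen} gives that $\{(x_\alpha+x_\beta,(x_\alpha^*+x_\beta^*)/2):(\alpha,\beta)\in U\}$ is a BABS of type $1/2$ and that $f(\alpha,\beta):=x_\alpha+x_\beta$ is scalarly measurable for $\Sigma_U$; applying Lemma~\ref{pro:NonMeasurableFromUBABS} with $(\Omega,\Sigma)=(U,\Sigma_U)$, $i=\mathrm{id}_U$ and this $A$ (so $i^{-1}(A)=A\notin\Sigma_U$) produces an equivalent norm on $X$ which is not $\Ba(X_w)$-measurable.

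The step I expect to be most delicate is precisely this descent from the off-diagonal $\Omega$ to the antisymmetric half $U$, i.e.\ converting the failure of $\Sigma_0=\mathcal{P}(\Omega)$ into the failure of fullness of the trace on $U$. It goes through smoothly only because for $|J|\le\mathfrak{c}$ the order triangle is product-measurable, so that $\Omega$ splits measurably into $U$ and $U^{\mathrm{op}}$. For $|I|>\mathfrak{c}$ there need be no such measurable orientation of the pairs, which is exactly why that regime is better treated separately via the non-$w^*$-separability of $X^*$.
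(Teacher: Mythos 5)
Your proof is correct, and its overall route coincides with the paper's: the same case split at $\mathfrak{c}$ (with $|I|>\mathfrak{c}$ handled via non-$w^*$-separability of~$X^*$), the same half-plane set $U$ obtained by embedding the index set into~$\erre$, and the same concluding appeal to Lemmas~\ref{LemmaBABSKunen} and~\ref{pro:NonMeasurableFromUBABS}. You deviate in two local steps, both soundly. First, for the passage to a \emph{bounded} biorthogonal system the paper simply cites \cite[Theorem~4.15]{fab-alt-JJ}, whereas you normalize $\|x_\alpha\|=1$, write $I=\bigcup_n\{\alpha:\|x_\alpha^*\|\le n\}$, and use that an increasing countable union of sets of Kunen cardinality again has Kunen cardinality; this closure property holds exactly as you indicate (each $A\cap(S_n\times S_n)$ lies in $\mathcal{P}(S_n)\otimes\mathcal{P}(S_n)$, which is contained in $\mathcal{P}(I)\otimes\mathcal{P}(I)$ because $S_n\times S_n$ is itself a product), so your reduction is self-contained where the paper leans on a citation. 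Second, for the non-measurable set $A\subset U$ the paper argues more directly: it picks $B\subset I\times I$ outside $\mathcal{P}(I)\otimes\mathcal{P}(I)$, notes that $B$ minus the two order halves is a diagonal set and hence product-measurable (the displayed computation in Lemma~\ref{lem:MoreNoKunen}), concludes that the intersection of $B$ with $U$ or with its reflection is non-measurable, and this intersection then fails to lie in the corresponding trace because $U$ and its reflection are product-measurable. Your detour through Lemma~\ref{lem:MoreNoKunen}(i) together with the reflection-and-gluing argument reaches the same conclusion and is valid: the swap $(\alpha,\beta)\mapsto(\beta,\alpha)$ preserves $\mathcal{P}(J)\otimes\mathcal{P}(J)$, so fullness of the trace on $U$ would propagate to the reflected half and then, by gluing along the measurable partition of the off-diagonal, to all of $\Omega$, contradicting the lemma. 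It is slightly longer than the paper's splitting argument, but neither variation introduces a gap.
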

\begin{proof} 
Let $\kappa$ be a non Kunen cardinal such that $X$ has a biorthogonal system
of cardinality~$\kappa$. Suppose first that $\kappa>\mathfrak{c}$. Then $|X| > \mathfrak{c}$ and so 
$X^*$ is not $w^*$-separable (bear in mind that any Banach space
having $w^*$-separable dual injects into~$\ell^\infty$). 
Thus, in this case {\em all} equivalent norms on~$X$ are not $\Ba(X_w)$-measurable.

Suppose now that $\kappa \leq \mathfrak{c}$. Fix a {\em bounded} biorthogonal system
$$
	\{(x_\alpha,x_\alpha^*): \, \alpha \in I\} \subset X \times X^*
$$ 
with $|I|=\kappa$ (cf. \cite[Theorem~4.15]{fab-alt-JJ}).
We can assume that $I \subset \erre$. Then 
$$
	U:=\{(\alpha,\beta)\in I\times I: \, \alpha>\beta\}
	\quad \mbox{and} \quad
	V:=\{(\alpha,\beta)\in I \times I:\, \alpha<\beta\}
$$ 
belong to
$\mathcal{P}(I)\otimes \mathcal{P}(I)$, because they can be written as
$$
	U=\bigcup_{\substack{p,q\in \mathbb{Q}\\ p>q}}I\cap (p,\infty)\times I\cap (-\infty,q)
	\quad \mbox{and} \quad
	V=\bigcup_{\substack{p,q\in \mathbb{Q}\\ p<q}}I\cap (-\infty,p)\times I\cap (q,\infty).
$$
Since $|I|$ is not a Kunen cardinal, there is a set $B \subset I\times I$ which does not
belong to~$\mathcal{P}(I)\otimes \mathcal{P}(I)$. As we noticed in the proof
of Lemma~\ref{lem:MoreNoKunen}, we have
$$
	B \setminus (U\cup V) \in \mathcal{P}(I)\otimes \mathcal{P}(I),
$$
therefore either $B \cap U \not\in \mathcal{P}(I)\otimes \mathcal{P}(I)$
or $B\cap V\not\in\mathcal{P}(I)\otimes \mathcal{P}(I)$. From now on we 
assume that $B \cap U \not\in \mathcal{P}(I)\otimes \mathcal{P}(I)$ (the other case is analogous).

Let $\Sigma_U$ be the trace $\sigma$-algebra of~$\mathcal{P}(I)\otimes \mathcal{P}(I)$ on~$U$.
Observe that $U$ satisfies conditions (a) and~(b) of Lemma~\ref{LemmaBABSKunen}, hence
the family
$$	
	\left\{\left(x_\alpha+x_\beta,\frac{x_\alpha^*+x_\beta^*}{2}\right): \ (\alpha,\beta)\in U\right\}
	\subset
	X \times X^*
$$
is a BABS of type $1/2$ and the function $f:U \to X$ given by $f(\alpha,\beta):=x_\alpha+x_\beta$ 
is scalarly measurable with respect to~$\Sigma_U$. 
Since $A:=B \cap U\not \in\Sigma_U$ (bear in mind
that $\Sigma_U \subset \mathcal{P}(I)\otimes \mathcal{P}(I)$), an appeal
to Lemma~\ref{pro:NonMeasurableFromUBABS} ensures the existence
of a non $\Ba(X_w)$-measurable equivalent norm on~$X$. The proof is over.
\end{proof}

Let $\kappa$ be a cardinal.
For each $\alpha < \kappa$, define $(e_\alpha,e^*_\alpha) \in \ell^1(\kappa)\times \ell^1(\kappa)^*$ by declaring
$e_\alpha(\beta):=\delta_{\alpha,\beta}$ for all $\beta< \kappa$
and $e^*_\alpha(f):=f(\alpha)$ for all $f\in \ell^1(\kappa)$. Then
$\{(e_\alpha,e^*_\alpha):\alpha< \kappa\}$ is a biorthogonal system. Moreover, since $\ell^1(\kappa)$ is isomorphic to
a closed subspace of~$C(2^{\kappa})$, the Hahn-Banach theorem ensures
that $C(2^{\kappa})$ also has a 
biorthogonal system of cardinality~$\kappa$. From Theorems~\ref{thm:Main} 
and~\ref{renormeBABS} we now get:

\begin{cor}\label{renormell1}
The following statements are equivalent for a cardinal~$\kappa$:
\begin{enumerate}
\item[(i)] $\kappa$ is a Kunen cardinal. 
\item[(ii)] All equivalent norms on $\ell^1(\kappa)$ 
are $\Ba(\ell^1(\kappa)_w)$-measurable.
\item[(iii)] All equivalent norms on $C(2^{\kappa})$ 
are $\Ba(C_w(2^{\kappa}))$-measurable.
\end{enumerate}
\end{cor}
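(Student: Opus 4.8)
The plan is to establish (i)$\Rightarrow$(iii) and (i)$\Rightarrow$(ii) for the forward direction, together with the contrapositives of (iii)$\Rightarrow$(i) and (ii)$\Rightarrow$(i); these four implications give all the equivalences at once. The common starting observation, already recorded at the beginning of this section, is that an equivalent norm on a Banach space~$X$ is automatically $\Ba(X_w)$-measurable as soon as $\Ba(X_w)=\Bo(X)$, since a norm is norm-continuous and hence $\Bo(X)$-measurable. Thus the forward implications reduce to producing the equality $\Ba(X_w)=\Bo(X)$ for $X=C(2^\kappa)$ and $X=\ell^1(\kappa)$, while the reverse implications will be handled by exhibiting a non-measurable norm when $\kappa$ is not Kunen.

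For (i)$\Rightarrow$(iii) I would start from Theorem~\ref{thm:Main}, which yields $\Ba(C_p(2^\kappa))=\Bo(C(2^\kappa))$ when $\kappa$ is Kunen. The point requiring a little care is that the corollary speaks of the \emph{weak} Baire $\sigma$-algebra $\Ba(C_w(2^\kappa))$, whereas Theorem~\ref{thm:Main} concerns the \emph{pointwise} one. I would bridge the gap with the sandwich
\[
\Bo(C(2^\kappa))=\Ba(C_p(2^\kappa))\subset \Ba(C_w(2^\kappa))\subset \Bo(C(2^\kappa)),
\]
in which the first inclusion holds because the pointwise topology is coarser than the weak one (so its Baire $\sigma$-algebra is smaller), and the second is the chain $\Ba(C_w(K))\subset\Bo(C_w(K))\subset\Bo(C(K))$ from the introductory diagram. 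This forces $\Ba(C_w(2^\kappa))=\Bo(C(2^\kappa))$, and (iii) follows from the measurability observation. For (i)$\Rightarrow$(ii) I would invoke Fremlin's Corollary~\ref{cor:Fremlin} directly to obtain $\Ba(\ell^1(\kappa)_w)=\Bo(\ell^1(\kappa))$ and conclude in the same fashion; alternatively one may transport (iii) to $\ell^1(\kappa)$ through the closed embedding $\ell^1(\kappa)\hookrightarrow C(2^\kappa)$, using that both the Baire and the Borel $\sigma$-algebras restrict correctly to a closed subspace.

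For the two reverse implications I would argue uniformly by contraposition. Suppose $\kappa$ is not a Kunen cardinal. The paragraph preceding the corollary supplies a biorthogonal system of cardinality~$\kappa$ in both spaces: the canonical $\{(e_\alpha,e_\alpha^*):\alpha<\kappa\}$ in $\ell^1(\kappa)$, and, via the closed embedding and Hahn--Banach, one of the same cardinality in $C(2^\kappa)$. Since $\kappa$ is non-Kunen by assumption, Theorem~\ref{renormeBABS} produces a non $\Ba(X_w)$-measurable equivalent norm on each of $X=\ell^1(\kappa)$ and $X=C(2^\kappa)$, which is precisely the negation of (ii) and of (iii).

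I expect no genuine obstacle, since the corollary is essentially a repackaging of Theorems~\ref{thm:Main} and~\ref{renormeBABS} together with the norm-measurability remark. The only step deserving attention is the passage from the pointwise to the weak Baire $\sigma$-algebra in (i)$\Rightarrow$(iii), which the displayed sandwich resolves. I would also note for safety that the awkward regime $\kappa>\mathfrak{c}$ never intervenes in the forward direction, because any Kunen cardinal satisfies $\kappa\le\mathfrak{c}$ by Lemma~\ref{lem:MoreNoKunen}(ii), while in the contrapositive that regime is already absorbed internally by Theorem~\ref{renormeBABS}.
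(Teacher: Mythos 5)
Your proposal is correct and follows essentially the same route as the paper: the forward implications come from Theorem~\ref{thm:Main} (and Corollary~\ref{cor:Fremlin}) together with the observation that $\Ba(X_w)=\Bo(X)$ makes every equivalent norm measurable, and the reverse implications come from Theorem~\ref{renormeBABS} applied to the biorthogonal systems of cardinality~$\kappa$ in $\ell^1(\kappa)$ and $C(2^\kappa)$ described just before the corollary. Your explicit sandwich $\Bo(C(2^\kappa))=\Ba(C_p(2^\kappa))\subset\Ba(C_w(2^\kappa))\subset\Bo(C(2^\kappa))$ and the remark about the regime $\kappa>\mathfrak{c}$ merely spell out details the paper leaves implicit.
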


It is clear that an equivalent norm on a Banach space~$X$ is
$\Ba(X_w)$-measurable whenever its closed dual unit ball is $w^*$-separable.
However, the converse is not true in general
(for an example with $X=\ell^\infty$, see~\cite{rod9}).
On the other hand, it was shown in~\cite{gra-alt} that the following properties are equivalent:
\begin{enumerate}
\item[(i)] All equivalent norms on~$X$ have $w^*$-separable closed dual unit ball.
\item[(ii)] There is no {\em uncountable} BABS on~$X$.
\end{enumerate}
Moreover, when $X$ is a dual space, (i) and~(ii)
are equivalent to the separability of~$X$, cf. \cite[Corollary~4.34]{fab-alt-JJ}. 
Our last result complements such equivalence.

\begin{pro}\label{normsinduals}
Let $Y$ be a separable Banach space not containing~$\ell^1$. The following statements are equivalent:
\begin{enumerate}
\item[(i)] $Y^*$ is separable.
\item[(ii)] All equivalent norms on~$Y^*$ are $\Ba(Y^*_w)$-measurable.
\end{enumerate}
\end{pro}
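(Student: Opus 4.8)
The plan is to prove the two implications separately; the forward one is immediate and the reverse is where all the work sits.

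For (i)$\Rightarrow$(ii), if $Y^*$ is separable then, as recalled in the introduction, $\Ba(Y^*_w)=\Bo(Y^*)$. Any equivalent norm is continuous for the norm topology, hence $\Bo(Y^*)$-measurable, hence $\Ba(Y^*_w)$-measurable; so (ii) holds trivially.

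For (ii)$\Rightarrow$(i) I would argue by contraposition: assuming $Y^*$ nonseparable, I would produce an equivalent norm on $Y^*$ that is not $\Ba(Y^*_w)$-measurable. The first step exploits that $Y$ does not contain $\ell^1$. Writing $X=Y^*$, so that $X^*=Y^{**}$, the identity $\Ba(X_w)=\sigma(X^*)$ recalled in the introduction gives $\Ba(Y^*_w)=\sigma(Y^{**})$. By the Odell--Rosenthal theorem every $y^{**}\in Y^{**}$ is a $w^*$-limit of a sequence in $Y$, so as a function on $Y^*$ it is a pointwise limit of elements of $Y$ and therefore $\sigma(Y)$-measurable; hence $\sigma(Y^{**})=\sigma(Y)$. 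Fixing a countable dense $\{y_n:n\in\Nat\}\subset Y$, every $y\in Y$ is a pointwise limit on $Y^*$ of the $y_n$, so in fact $\Ba(Y^*_w)=\sigma(Y)=\sigma(\{y_n:n\in\Nat\})$ is \emph{countably generated}. Consequently $|\Ba(Y^*_w)|\le\mathfrak{c}$, and the same bound holds for its trace on any subset of $Y^*$.

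The second step feeds this into Lemma~\ref{pro:NonMeasurableFromUBABS}. For that I need a bounded almost biorthogonal system $\{(x_\alpha,x_\alpha^*):\alpha\in I\}\subset Y^*\times Y^{**}$ of some type $\eta<1$ with $|I|=\mathfrak{c}$. Granting such a system, the $x_\alpha$ are pairwise distinct (from $x_\alpha^*(x_\alpha-x_\beta)\ge 1-\eta$ one gets $\|x_\alpha-x_\beta\|\ge(1-\eta)/\sup_\gamma\|x_\gamma^*\|>0$), so $S:=\{x_\alpha:\alpha\in I\}$ has cardinality $\mathfrak{c}$. The trace $\Sigma$ of $\Ba(Y^*_w)$ on $S$ then has at most $\mathfrak{c}$ members, whereas $S$ has $2^{\mathfrak{c}}$ subsets, so there is $A\subset I$ with $\{x_\alpha:\alpha\in A\}\notin\Sigma$. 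Taking $(\Omega,\Sigma)=(S,\Sigma)$, letting $i\colon S\to I$ be the natural bijection and $f\colon S\to Y^*$ the inclusion (which is scalarly measurable precisely because $\Sigma$ is the trace of $\sigma(Y^{**})=\Ba(Y^*_w)$), Lemma~\ref{pro:NonMeasurableFromUBABS} delivers an equivalent norm on $Y^*$ that is not $\Ba(Y^*_w)$-measurable, contradicting (ii).

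The main obstacle is the existence of an almost biorthogonal system of cardinality exactly $\mathfrak{c}$: an uncountable one does not suffice, since under Martin's axiom a system of size $\aleph_1$ could be a $Q$-set, forcing \emph{every} subset of $\{x_\alpha\}$ into the trace and destroying the cardinality count. To reach size $\mathfrak{c}$ I would first show $\mathrm{dens}(Y^*)=\mathfrak{c}$. Nonseparability gives an uncountable norm-$\epsilon$-separated set in the $w^*$-compact metrizable ball $B_{Y^*}$; since the norm is $w^*$-lower semicontinuous, this set is an uncountable clique of the \emph{open} symmetric relation $\{(x^*,z^*):\|x^*-z^*\|>\epsilon\}$ on the Polish space $(B_{Y^*},w^*)$, and the open-colouring (perfect-clique) dichotomy upgrades it to a perfect, hence size-$\mathfrak{c}$, $\epsilon$-separated subset. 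From a separated family of size $\mathfrak{c}$ one then extracts, by biorthogonalisation, a bounded almost biorthogonal system of the same cardinality (alternatively, one upgrades the uncountable system supplied by \cite{gra-alt} using $\mathrm{dens}(Y^*)=\mathfrak{c}$). Checking that the biorthogonalisation survives on a set of full cardinality $\mathfrak{c}$ is the delicate point on which the whole reverse implication rests.
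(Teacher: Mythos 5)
Your skeleton for (ii)$\Rightarrow$(i) is in fact the same as the paper's: use Odell--Rosenthal (this is where ``$Y$ separable, $\ell^1\not\subset Y$'' enters) to show that $\Ba(Y^*_w)=\sigma(Y^{**})$ is generated by a countable dense subset of~$Y$, hence has cardinality at most~$\mathfrak{c}$; then a counting argument ($2^{\mathfrak{c}}>\mathfrak{c}$) produces a subset of a size-$\mathfrak{c}$ system that is not in the trace $\sigma$-algebra; then Lemma~\ref{pro:NonMeasurableFromUBABS}, applied to the identity/inclusion map, yields a non-$\Ba(Y^*_w)$-measurable equivalent norm. All of this part of your argument is correct, including the pertinent observation that a system of cardinality exactly~$\mathfrak{c}$ (not merely uncountable) is needed for the counting to go through.

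The genuine gap is exactly where you flag ``the delicate point'': you never actually produce a bounded (almost) biorthogonal system of cardinality~$\mathfrak{c}$ in~$Y^*$. The proposed extraction step --- from a norm-$\epsilon$-separated family of size~$\mathfrak{c}$ one gets, ``by biorthogonalisation'', a BABS of the same cardinality --- is not a valid general principle: under CH, Kunen's compact space gives a nonseparable $C(K)$ of density $\omega_1=\mathfrak{c}$ (so it contains $\epsilon$-separated sets of cardinality~$\mathfrak{c}$) which has \emph{no} uncountable biorthogonal system at all (see \cite{gra-alt}). So any correct argument must use that $Y^*$ is the dual of a separable space, and that is precisely what the paper's citation supplies: since $Y$ is separable, $X=Y^*$ is a \emph{representable} Banach space, and every nonseparable representable space has a bounded biorthogonal system $\{(x_\alpha,x_\alpha^*):\alpha<\mathfrak{c}\}\subset X\times X^*$ of cardinality~$\mathfrak{c}$ (\cite[Theorem~4.33]{fab-alt-JJ}, a Stegall-type Cantor-scheme construction --- essentially the step you left open). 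Your perfect-clique argument (the relation $\|x^*-z^*\|>\epsilon$ is open on the Polish space $(B_{Y^*},w^*)$ by $w^*$-lower semicontinuity of the norm, so an uncountable separated set yields a perfect one) is correct, but it only delivers the separated set, which is the easy half; neither it nor ``upgrading'' the merely uncountable system of \cite{gra-alt} closes the gap. If you replace your extraction step by the citation of \cite[Theorem~4.33]{fab-alt-JJ}, your proof becomes the paper's proof.
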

\begin{proof}
It only remains to prove (ii)$\impli$(i). Since $Y$ is separable, its dual $X:=Y^*$ is 
a representable Banach space. Thus, if we assume that $X$ is not separable, then 
there is a bounded biorthogonal system $\{(x_\alpha,x_\alpha^\ast) : \alpha<\mathfrak c\}
\subset X \times X^*$, cf. \cite[Theorem~4.33]{fab-alt-JJ}.
Let $D\subset Y$ be a countable 
norm dense set. We claim that 
\begin{equation}\label{equation:CountablyGenerated}
	\Ba(X_w)=\sigma(D).
\end{equation} 
Indeed, fix $y^{**}\in X^\ast=Y^{**}$. By the Odell-Rosenthal theorem
(cf. \cite[Theorem~4.1]{van}) there is a sequence $(y_n)_{n\in \Nat}$ in~$Y$
converging to~$y^{**}$ in the $w^*$-topology. Since $D$ is norm dense in~$Y$, we can find $y'_n \in D$ such that $\|y_n-y'_n\| \leq 1/n$. Then
$(y'_n)_{n\in \Nat}$ also converges to~$y^{**}$ in the $w^*$-topology and so
$y^{**}$ is $\sigma(D)$-measurable. As $y^{**}\in X^*$ is arbitrary, 
equality~\eqref{equation:CountablyGenerated} holds.

In particular, $\Ba(X_w)$ is countably generated. Thus, $|\Ba(X_w)| = \mathfrak c<2^\mathfrak{c}$
and hence there exists $A\subset\mathfrak c$ such that 
$\{x_\alpha: \alpha\in A\}$ does not belong to the trace
 of~$\Ba(X_w)$ on~$\Omega:=\{x_\alpha:\alpha<\mathfrak{c}\}$, which we denote 
by~$\Sigma$. Since the ``identity'' function $f:\Omega \to X$ 
satisfies the assumptions of~Lemma~\ref{pro:NonMeasurableFromUBABS}
(with respect to~$\Sigma$), the space $X$ admits a non $\Ba(X_w)$-measurable
equivalent norm. 
\end{proof}

\begin{remark}
If $\mathfrak{c}$ is not a Kunen cardinal, then statements (i) and~(ii) of Proposition~\ref{normsinduals} 
are equivalent for any separable Banach space~$Y$. 
\end{remark}
\begin{proof}
It only remains to prove that (ii) fails when $Y$ contains~$\ell^1$. In this case, 
$\ell^1(\mathfrak c)$ is isomorphic to a closed subspace $Z$ of~$Y^\ast$
(cf. \cite[Theorem~4.1]{van}). By Corollary~\ref{renormell1}, there is a non
$\Ba(Z_w)$-measurable equivalent norm $\|\cdot\|_Z$ on~$Z$.
Since the trace of $\Ba(Y^*_w)$ on~$Z$ is exactly $\Ba(Z_w)$, 
we conclude that any equivalent norm on~$Y^*$ extending~$\|\cdot\|_Z$ (cf. \cite[II.8.1]{dev-alt-J})
cannot be $\Ba(Y^*_w)$-measurable.
\end{proof}

However, if $\mathfrak{c}$ is a Kunen cardinal, then
$\Ba(C[0,1]^*_w)=\Bo(C[0,1]^*)$ (see Remark~\ref{remark:dualCK}) and
so all equivalent norms on~$C[0,1]^*$ are $\Ba(C[0,1]^*_w)$-measurable, while
$C[0,1]^*$ is nonseparable.

%\bibliography{C:/Users/Aviles/Documents/medibilidad/inves,C:/Users/Aviles/Documents/medibilidad/invesJose}
% \bibliography{C:/Mat/Bernardo/Bibliografia/inves,C:/Mat/Bibliografia/invesJose}
% \bibliographystyle{amsplain}

\end{document}